\newcounter{thm}[section]
\newtheorem{lem}[thm]{Lemma}
\newtheorem{twr}[thm]{Theorem}
\newtheorem{cor}[thm]{Corollary}
\newtheorem{prop}[thm]{Proposition}
\newtheorem{fact}[thm]{Fact}
\theoremstyle{definition}
\newtheorem{remark}[thm]{Remark}
\newcommand{\frakG}{\mathcal Q}
\newcommand{\frakP}{\mathcal P}
\newcommand{\frakR}{\mathcal R}
\newcommand{\frakN}{\mathcal K}
\def\Fr{\operatorname{Fr}}
 \def\er{\mathbb{R}}
 \def\cc{\mathbb{C}}
\def\op{W_P}
\def\oop{\omega (P)}        
\def\opo{W_{\omega(P)}}
\def\la{\lambda } 
\def\La{\Lambda }
\def\re{\mathop{\rm Re}\nolimits}
\def\im{\mathop{\rm Im}\nolimits}
\def\const{\mathop{\rm const}\nolimits}
\def\en{\mathbb{N}}
\def\eq{\mathbb{Q}}
\def\K{\mathbb{K}}
\def\Int{\,{\rm Int}\,}
\def\ord{\mathop{\rm ord}\nolimits}
\def\ve{\varepsilon}
\def\WW{\operatorname{\mathcal W}}	
\title[A geometric model]{A geometric model\\ of an arbitrary differentially closed field\\ of characteristic zero}
\subjclass[2010]{12H05, 13N15, 14P20, 14P10.} 
\keywords{Nash function, semialgebraic set, real closed field, differentially closed field, ordering.}
\thanks{This research was partially supported by the Polish
National Science Centre, grant 2012/07/B/ST1/03293.}
\author[S. Spodzieja]{Stanis{\l}aw Spodzieja}
\address{Faculty of Mathematics and Computer Science, University of \L \'od\'z\newline
S. Banacha 22, 90-238 \L \'od\'z, Poland}
\email{stanislaw.spodzieja@wmii.uni.lodz.pl}
\begin{document}	
 \baselineskip=15pt
 
\maketitle


\begin{abstract} We give an elementary construction of an arbitrary differentially closed field {\color{black}and 
of a universal extension} of a differential field in terms of Nash function fields. 
 We also give a characterization of any 
Archime\-dean ordered differentially closed field in terms of Nash functions. 
\end{abstract}

\font\ccc=cmr10 
\font\ddd=cmti10

\section*{Introduction}

The study of differential algebras was started in the first half of the twentieth century by 
 J. F. Ritt \cite{Ritt0,Ritt}, and continued by E.~R.~Kolchin and J. F. Ritt \cite{RittKolchin} (see also \cite{Kolchin,Kolchin2}), I.~Kaplansky \cite{Kaplansky} and others  (see for instance \cite{Hoeven,Marker,PiercePillay,Robinson-2,Robinson-1,Robinson,Sacks,Seidenberg1, Seidenberg2}).  The investigation of these algebras in the context of model theory was initiated by A.~Robinson~\cite{Robinson-3}. Despite a fairly long period of study of differential algebras, it is difficult to indicate papers where natural examples of differentially closed fields are given. 
By A. Seidenberg's embedding theorem  \cite{Seidenberg1, Seidenberg2} we only know that: \emph{Every countable ordinary differential field of characteristic zero $F$ is differentially isomorphic over $F$ to a differential subfield of the field of germs of meromorphic functions in one variable at the origin}. {\color{black} L. Harrington \cite{Harrington} proved that  
\emph{if a complete and model complete decidable theory $T$ has the finite basis property and every quantifier-free constrained formula (in the language of  $T$) is complete, then $T$ has a recursively presentable prime model}. He used this model-theoretic result to construct the differential closure of any given recursively presentable differential field.}


The aims of this article are: to give {\color{black}models of ordinary differentially closed fields} of characteristic zero {\color{black}(Theorems \ref{charactdiffclosedfield} and \ref{maindifferclosed1})}; 
to construct {\color{black}a universal extension} of an ordinary differential field (Theorem \ref{coruniversal}); and to construct an Archimedean ordinary ordered differentially closed field (Theorem  \ref{maindifferclosed1real}). To this end we present a construction, in terms of Nash functions, of all algebraically closed fields of characteristic zero, i.e., the algebraic {\color{black}closure} of the rational function field $k=\eq(\La_T)$ in the system of independent variables $\La_T=(\La_t:t\in T)$, $T\ne\emptyset$, 
 with coefficients in $\eq$ (see \cite{Sppjm}). {\color{black} It suffices to consider such fields}, 
 because any ordinary differentially closed field $K$ of characteristic zero is differentially isomorphic to the algebraic closure
 of some field $\eq(\La_T)$ (Theorem \ref{charactdiffclosedfield}). If $T=\emptyset$ then $\eq(\La_T)=\eq$ and so the differential closure of  $\eq(\La_T)$ is contained in the algebraic closure of $\eq(\La_\en)$, i.e., one can take $T=\en$ (Proposition \ref{differentiallyclosedtranscendence}). 
We assume the Kuratowski-Zorn Lemma {\color{black}(and indirectly the axiom of choice, see \cite{KuratowskiMostowski})}, so the set $T$ can be well-ordered if $T\ne\emptyset$. 

The construction of any differentially closed field will be based on the construction of some family $\varOmega$ of open connected semialgebraic subsets of  $\cc^T$, called a $c$-filter (see Section~\ref{preliminaries}) and the rings $\mathcal{N}(U)$ of complex Nash functions on sets $U\in\varOmega$. 
 The algebraic closure of $\eq(\La_t:t\in T)$ will be constructed as the set of equivalence classes of the following relation in $\bigcup_{U\in \varOmega}\mathcal{N}(U)$:
$$
\hbox{$(f_1:U_1 \to \cc )\sim (f_2:U_2 \to \cc )$ iff $f_1|_{U_3}=f_2|_{U_3}$ for some $U_3\in\varOmega$.} 
$$
Then the set $\mathcal N_{\varOmega}$ of equivalence classes of  ``$\sim$'' with the usual operations of addition and multiplication is a field, which is the algebraic closure of $\eq(\La_T)$ (see Proposition \ref{realandalgebraicclos}, cf. \cite[Theorem 2.4 and Corollary 2.5]{Spodzieja2}). 
Whenever the space $\cc^T$ is infinite-dimensional, we will construct 
 a derivation $\delta$ on $\mathcal{N}_\varOmega$ such that for each pair $p,q \in \mathcal{N}_\varOmega\{y\}$ of differential polynomials such that 
 $\ord q< \ord p$, $q\ne0$, there is some $f\in \mathcal{N}_\varOmega$ with $p(f) = 0$ and $q(f)\ne 0$ (see Theorem \ref{maindifferclosed1}), {\color{black}i.e.,
%
   $\delta$ satisfies the L.~Blum \cite{Blum} definition of an ordinary differential closed field of characteristic zero.}

{\color{black} 
{\color{black}To build various kinds of differentially closed fields we construct two $c$-filters, $\varOmega^{\mathbb{K}}_{{\bf x_0}}$ and  $\WW^{\mathbb{K}}_T$, in $\K^T$, where $\K=\er$ or $\K=\cc$ (see Section \ref{cfilterssection} and Proposition \ref{realNash1} in Section \ref{ordering in Rm}).} 
All sets in those $c$-filters  will be 
 simply connected. {\color{black}Moreover, each $U\in\WW^\cc_T$} is dense in  $\cc^T$.
This  enables us to construct, for any ordinary differential Nash field, a differentially closed extension of that field of the same cardinality (see Corollary \ref{diffclosedextension}) and to construct {\color{black}a universal extension} of an ordinary differential field (Theorem \ref{coruniversal}). We also construct, in terms of real Nash functions on  $U\in \varOmega_{{\bf x_0}}^\er$, a model of an arbitrary ordinary Archimedean ordered differentially closed field (see Section \ref{constrorderedalgebraically}).}

\section{Differential fields}

In this section we will collect some fact concerning differential fields. For more detailed information on this topic see for instance \cite{Kaplansky, Kolchin,Marker,PiercePillay,Ritt,Robinson,
Sacks}.

\subsection{Differential algebras}

Let $k$ be a commutative ring with unity and let $A$ be a $k$-algebra, i.e., a left $k$-module with multiplication. A $k$-linear mapping $\delta:A\to A$ is called a \emph{derivation} on $A$ if 
$\delta (ab)=\delta (a)b+a\delta (b)$ for any $a,b\in A$. {\color{black} Then obviously all elements of $k$ are necessarily constants, i.e., $\delta(\lambda)=0$ for $\lambda\in k$.}

A $k$-\emph{differential algebra} $(A,\Delta)$ is defined as a $k$-algebra $A$ with a nonempty set $\Delta$ of derivations on $A$ such that $\delta\delta' (a)=\delta'\delta (a)$ for all $a\in A$ and $\delta,\delta'\in \Delta$. If the $k$-algebra $A$ is a ring, an integral domain or a field, then we call the $k$-differential algebra $(A,\Delta)$ a $k$-\emph{differential ring}, a $k$-\emph{differential domain} or a $k$-\emph{differential field} respectively. If $k=\eq$, we will write ``differential'' instead of ``$k$-differential''. If  $m=\operatorname{card} \Delta=1$, the $k$-differential algebra (respectively ring, domain or field) is called \emph{ordinary}; if $m>1$, it is called \emph{partial}. If $\Delta=\{\delta\}$, the $k$-differential algebra $(A,\Delta)$ is denoted by $(A,\delta)$.

Let $(A,\Delta)$ be a $k$-differential algebra and let $\Theta$ be the set of $k$-\emph{derivative operators} on $A$ generated by $\Delta$, i.e., the 
free commutative semigroup generated by $\Delta$. Then any $\theta\in\Theta$ can be uniquely expressed in the form of a product $\theta=\prod_{\delta\in\Delta}\delta^{e(\delta)}$, where $e(\delta)\in\en$ (we assume that $0\in\en$). The number $s=\sum_{\delta\in\Delta}e(\delta)$ is called the \emph{order} of $\theta$ and is denoted by $\ord \theta$.   

\subsection{Differential polynomials}

Let $(R,\Delta)$ be a $k$-differential ring, $\Theta$  the set of $k$-derivative operators on $R$ generated by $\Delta$, and $J$ a nonempty set.  We denote by $R\{y_j:j\in J\}$ the \emph{ring of $k$-differential polynomials}, i.e., the ring of polynomials 
$$
R\{y_j:j\in J\}:=R[Y_{J,\Theta}] 
$$
with coefficients in $R$, in the system of variables $Y_{J,\Theta}=(y_{j,\theta}:j\in J,\; \theta\in\Theta)$, where we assume that $\theta_1 (y_{j,\theta_2})=y_{j,\theta_1\theta_2}$ for $\theta_1,\theta_2\in\Theta$ and $y_j=y_{j,\theta}$ for $\theta\in \Theta$ of order~$0$. The ring $R\{y_j:j\in J\}$ has the structure of a $k$-differential ring with the set of $k$-derivations $\Delta$  if we set $\delta (y_{j,\theta})=y_{j,\delta\theta}$ for $j\in J$, $\delta\in\Delta$ and $\theta\in\Theta$. If $\operatorname{card}J=1$, we will write $R\{y\}$ instead of $R\{y_j:j\in J\}$. 

Take any $k$-differential polynomial $p\in R\{y_j:j\in J\}$. Then there are $d,n\in\en$, $n>0$, and $(j_1,\theta_1),\ldots,(j_n,\theta_n)\in J\times \Theta$ such that 
\begin{equation*}\label{eqfprmp}
p(Y_{J,\Theta})=\sum_{i_1+\cdots+i_n\le d}r_{i_1,\ldots,i_n}(y_{j_1,\theta_1})^{i_1}\cdots (y_{j_n,\theta_n})^{i_n},
\end{equation*}
where $i_1,\ldots,i_n\in\en$ and $r_{i_1,\ldots,i_n}\in R$ for $i_1+\cdots+i_n\le d$. The number 
$$
\max\{i_1+\cdots+i_n:r_{i_1,\ldots,i_n}\ne 0,\,i_1+\cdots+i_n\le d\}
$$ 
is called the \emph{degree} of $p$ and denoted by $\deg p$ ($\deg p=-\infty$ if $p=0$). The number 
$$
\max\bigcup_{1\le s\le n}\{\ord \theta_s: r_{i_1,\ldots ,i_n}\ne 0,\, i_1+\cdots+i_n\le d,\, i_s>0\}
$$ 
is called the \emph{order} of $p$ and denoted by $\ord p$ (we set $\max\emptyset=-1$, and then $\ord p=-1$ if $p\in R$).

If $\operatorname{card}J=1$ and $\Delta=\{\delta\}$, then for any polynomial $p\in R\{y\}$, we denote by $p^*$  the unique polynomial from $R[x_0,\ldots,x_n]$, where $n={\ord p}$, such that 
$$
p(y)=p^*(y_{\delta^0},\ldots,y_{\delta^n}).
$$
Then for any $a\in R$ we have
$$
p(a)=p^*(a,\delta(a),\ldots,\delta^n(a)).
$$

\subsection{Differentially closed fields}

A field $K$ of characteristic zero equipped with $m>0$ commuting derivations
is called \emph{differentially closed} (or \emph{partial differentially closed} for $m>1$) if  every system of differential polynomial equations {\color{black}and
inequations} in several variables with a solution in some differential extension
 of $K$  has a solution in $K$. If  $m=1$, then the ordinary differential field $K$ is called \emph{ordinary differentially closed}. 

In this paper we will use the following (equivalent) definition of ordinary differentially closed fields, due to L. Blum~\cite{Blum}:

An ordinary differential field $(K,\delta)$ of characteristic zero  is called 
\emph{differentially closed}
if for each pair $p, q \in K\{y\}$ of differential polynomials such that 
$\ord q< \ord p$ and $q\ne 0$, there is some $a\in  K$ with $p(a) = 0$ and $q(a)\ne 0$.

From \cite[Lemma 4]{Shelah} we obtain the following fact (cf. \cite{Seidenberg0,Seidenberg,Seidenberg1,Seidenberg2}). 

\begin{prop}\label{differentiallyclosedtranscendence}
Assume that $(K,\delta)$ is a differentially closed field of characteristic zero. Then the transcendence degree $\operatorname{trdeg}_\eq K$ of $K$ over $\eq$ is infinite. 
\end{prop}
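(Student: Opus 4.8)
The plan is to show that a differentially closed field $(K,\delta)$ with $\delta\ne 0$ must contain infinitely many elements that are algebraically independent over $\eq$. Fix some $b\in K$ with $\delta(b)\ne 0$; this exists since $\delta\ne 0$. The strategy is to build, for each $n$, a finite subset of $K$ of size $n$ which is algebraically independent over $\eq$, using the defining property of differential closedness repeatedly: at each stage we will produce a new element satisfying a differential equation of higher order than all elements constructed so far, together with a nonvanishing inequation that forces genuine transcendence.

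Concretely, I would argue by induction. Suppose $a_1,\dots,a_n\in K$ are algebraically independent over $\eq$ and, moreover, all of them lie in the differential subfield generated by some single element of order $\le N$ (or more simply, track that $\operatorname{trdeg}_\eq\eq(a_1,\dots,a_n)=n$). To produce $a_{n+1}$, choose the pair $p,q\in K\{y\}$ with $p$ of order $N+1$ and $q$ of order $\le N$, $q\ne 0$, chosen so that any solution $a$ of $p(a)=0$, $q(a)\ne 0$ cannot be algebraic over $\eq(a_1,\dots,a_n)$. The cleanest choice: take $p(y) = \delta(y) \cdot h(y) - 1$ type constructions, or even simpler, exploit that a single nonzero derivation already forces transcendence — e.g. using $b$ with $\delta(b)\ne 0$, one shows $b$ is transcendental over $\eq$ because $\delta$ kills all of $\eq$ (indeed $\delta$ vanishes on the prime field), so if $b$ were algebraic over $\eq$ it would satisfy a minimal polynomial $P$ with $P(b)=0$, and differentiating gives $P'(b)\delta(b)=0$, forcing $P'(b)=0$, contradicting minimality and separability in characteristic zero. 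This handles $n=1$.

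For the inductive step, the key is to realize that inside $K$ one can iterate: given $a_1,\dots,a_n$ algebraically independent over $\eq$, consider the differential field $\eq\langle a_1,\dots,a_n\rangle\subseteq K$ and pick an element $c\in K$ satisfying a first-order differential equation whose coefficients involve a high-order operator applied to the $a_i$, together with an inequation guaranteeing $c\notin \overline{\eq(a_1,\dots,a_n)}$ (the algebraic closure within $K$). A convenient device: for a suitable differential polynomial, the inequation $q(c)\ne 0$ can be arranged to say that a certain "Wronskian-type" determinant in $a_1,\dots,a_n,c$ is nonzero, which is exactly algebraic independence. Then $\{a_1,\dots,a_n,c\}$ is algebraically independent over $\eq$, completing the induction and showing $\operatorname{trdeg}_\eq K\ge n$ for all $n$, hence infinite.

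The main obstacle is choosing the pair $(p,q)$ at each stage so that the resulting solution is provably transcendental over the field generated so far — the differential closedness property gives us a solution to $p=0$, $q\ne 0$, but we must engineer $p$ and $q$ (with the order constraint $\ord q < \ord p$) so that no such solution can satisfy any polynomial relation over $\eq(a_1,\dots,a_n)$. I expect that the reference \cite[Lemma 4]{Shelah} supplies exactly this mechanism, so in the actual proof I would invoke it directly; absent that, the honest route is the Wronskian argument: if $a_1,\dots,a_n,c$ were algebraically dependent over $\eq$ while individually the $a_i$ are independent, then $c$ satisfies a nonzero polynomial over $\eq(a_1,\dots,a_n)$, and differentiating this relation repeatedly produces a linear system forcing a Wronskian to vanish; choosing $q$ to be precisely that Wronskian (which has order $\le N$ once the $a_i$ are fixed of bounded order, while $p$ has order $N+1$) gives the contradiction. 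Assembling these pieces and verifying the order bookkeeping ($\ord q<\ord p$ throughout) is the technical heart of the argument.
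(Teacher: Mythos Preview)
Your proposal has a genuine gap in the inductive step. The Wronskian device does not do what you claim: the nonvanishing of the Wronskian $W(a_1,\dots,a_n,c)$ certifies that $a_1,\dots,a_n,c$ are \emph{linearly} independent over the constant subfield $K^\delta=\{x\in K:\delta(x)=0\}$, not that they are \emph{algebraically} independent over $\eq$. For instance $1,b,b^2$ are always algebraically dependent over $\eq$ but can easily have nonzero Wronskian. Differentiating a polynomial relation $P(a_1,\dots,a_n,c)=0$ does not produce a linear system in the $a_i,c$; it produces further polynomial relations, and no Wronskian falls out. So taking $q$ to be a Wronskian determinant gives you no control over algebraic independence, and the step ``$q(c)\ne 0$ forces $c\notin\overline{\eq(a_1,\dots,a_n)}$'' fails. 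Beyond this, the proposal never actually specifies a pair $(p,q)$ with $\ord q<\ord p$ that works, so even the order bookkeeping you flag as ``the technical heart'' is missing.

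The paper's argument is quite different from your outline and does not attempt to manufacture one new transcendental at each stage. Instead it fixes a \emph{single} first-order equation, $p_0=(1+y)y'-y$, and iterates differential closedness with the order-zero inequation $q_j=y(y-\varphi_0)\cdots(y-\varphi_{j-1})$ to produce infinitely many pairwise distinct nonzero solutions $\varphi_0,\varphi_1,\dots$ of $p_0=0$ in $K$. The transcendence conclusion is then imported wholesale from \cite[Lemma~4]{Shelah}, which (for this equation) turns ``infinitely many distinct solutions'' into ``infinite transcendence degree over $\eq$''. So the external lemma is not a tool for extracting one transcendental element per step, as you anticipated, but a structural statement about the solution set of a specific ODE; your setup does not put you in a position to invoke it.
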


\begin{proof} 
We will write $y'$ for $y_\delta$ and $y$ for $y_{0}$. 
Consider the  differential polynomials $p_0=(1+y)y'-y$, $q_0=y$.  Then $0\le \ord q_0<\ord p_0$. So, there exists  $\varphi_0\in K$ such that $p_0(\varphi_0)=0$ and $q_0(\varphi_0)\ne 0$. Consider a sequence of differential polynomials $p_j=p_0$ and $q_j=q_{j-1}(y-\varphi_{j-1})$, where $\varphi_{j-1}\in K$, $p_{j-1}(\varphi_{j-1})=0$ and $q_{j-1}(\varphi_{j-1})\ne 0$ for $j\in\en$, $j>0$. Since $(K,\delta)$ is a differentially closed field, such sequences  exist. Consequently, there exist an infinite number of distinct nonzero solutions of the equation $p_0=0$. Then  \cite[Lemma 4]{ Shelah} yields the assertion.
\end{proof}

{\color{black}Let the differential field $(\mathcal{U},\delta^*)$ be a differential extension of a differential field $(K,\delta)$ of characteristic zero, i.e., $K$ is a subfield of $\mathcal{U}$ and $\delta^*(a)=\delta(a)$ for $a\in K$. We say that the extension $(\mathcal{U},\delta^*)$ of $(K,\delta)$ is \emph{finitely generated} if $\mathcal{U}$ has a finite subset $A$ such that $(\mathcal{U},\delta^*)$ is the smallest differential extension of $(K,\delta)$ in $(\mathcal{U},\delta^*)$ that contains $A$. The set $A$ is called the \emph{set of generators} of the extension $(\mathcal{U},\delta^*)$ over $(K,\delta)$. The extension $(\mathcal{U},\delta^*)$ over $(K,\delta)$ is called \emph{simply generated} if
  it has a  set of generators consisting of one element.

After E.R. Kolchin \cite{Kolchin,Kolchin2}, we say that $(\mathcal{U},\delta^*)$ is a \emph{semiuniversal extension} of  $(K,\delta)$ 
if every finitely generated differential extension of $(K,\delta)$ differentially embeds over $K$ in $(\mathcal{U},\delta^*)$. We say that $(\mathcal{U},\delta^*)$ is a \emph{universal extension} of  $(K,\delta)$ if $(\mathcal{U},\delta^*)$ is semiuniversal over every finitely generated differential extension of $(K,\delta)$. A~universal extension of the field $\eq$ is called a \emph{universal differential field}.}

\subsection{Ordered differentially closed fields}

We will use the following  definition of ordered  ordinary differentially closed fields, due to M. Singer \cite{Singer}.

Let $R$ be a real field with an ordering $\succ$ and a derivation $\delta$. The field $(R,\delta)$ is called an \emph{ordered  \emph{(or }real\emph{)} ordinary differentially closed field}, or briefly \emph{ordered differentially closed}, if $R$ is real closed and 
for any $p,q_1,\ldots,q_m\in R\{y\}$ such that $n=\ord p\ge \ord q_i$ for $1\le i\le m$, and any $a_0,\ldots,a_n\in R$ such that $p^*(a_0,\ldots,a_n)=0$ with $\frac{\partial p^*}{\partial x_n}(a_0,\ldots,a_n)\ne 0$ and $q_i^*(a_0,\ldots,a_n)\succ 0$ for $1\le i\le m$, there exists $a\in R$ such that $p(a)=0$ and $q_i(a)\succ 0$ for $0\le i\le m$.

\begin{remark}\label{densitythm1}
From the definition we immediately see that if $(R,\delta)$ is an ordered differentially closed field, 
then for any $n\in\en$ the space
$$
V=\{(a,\delta(a),\ldots,\delta^n(a))\in R^{n+1}:a\in R\}
$$
is dense in $R^{n+1}$, i.e., for any polynomials $g_1,\ldots,g_m\in R[x_0,\ldots, x_n]$, if 
$$
X=\{(a_0,\ldots,a_n)\in R^{n+1}:g_i(a_0,\ldots,a_n)\succ 0,\;1\le i\le m\}\ne \emptyset,
$$
then $V\cap X\ne \emptyset$.
\end{remark}

The following is known (see \cite{Singer2}):

\begin{prop}\label{closedordereddifferential}
If $(R,\delta)$ is an ordered differentially closed field, then $(R(i),\delta^*)$, where $i^2=-1$ and  $\delta^*(f_1+if_2)=\delta(f_1)+i\delta(f_2)$ for $f_1,f_2\in R$, is a differentially closed field $($of characteristic $0)$. 
\end{prop}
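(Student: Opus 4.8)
The plan is to check that $C:=R(i)$ satisfies Blum's criterion. Since $R$ is real closed, $C$ is algebraically closed, has characteristic zero, and $\delta^{*}$ is a derivation on $C$ extending $\delta$; hence (by Blum's criterion, recalled above) it is enough to show that for every pair $p,q\in C\{y\}$ with $q\neq0$ and $\ord q<\ord p$ there is $a\in C$ with $p(a)=0$ and $q(a)\neq0$. The case $\ord p=0$ is immediate because $C$ is algebraically closed, so assume $n:=\ord p\geq1$.

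First I would reduce to a single irreducible hypersurface. Factoring $p^{*}\in C[x_{0},\dots,x_{n}]$ and choosing an irreducible factor $h^{*}$ that genuinely involves $x_{n}$ (one exists since $\ord p=n$), and letting $h\in C\{y\}$ be the corresponding differential polynomial, we have $h\mid p$ and $\ord h=n>\ord q$, so it suffices to find $a\in C$ with $h(a)=0$ and $q(a)\neq0$. Since the characteristic is zero and $h^{*}$ is irreducible and involves $x_{n}$, neither $\partial h^{*}/\partial x_{n}$ nor $q^{*}$ (which does not involve $x_{n}$, hence is not a multiple of $h^{*}$) vanishes identically on the irreducible variety $V(h^{*})\subseteq C^{n+1}$; thus there is a point $b=(b_{0},\dots,b_{n})\in V(h^{*})$ with $\partial h^{*}/\partial x_{n}(b)\neq0$ and $q^{*}(b)\neq0$.

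The heart of the argument is the next step: transfer to $(R,\delta)$ and apply the defining property of ordered differentially closed fields in the strengthened ``approximation'' form behind Remark \ref{densitythm1} (namely: near a simple point of a hypersurface one can find differential solutions inside any prescribed finite conjunction of strict polynomial inequalities of admissible order, by using those inequalities as the $q_{i}$'s in Singer's condition). Write $x_{k}=s_{k}+it_{k}$ with real coordinates, identify $C^{n+1}$ with $R^{2n+2}$, and write $h^{*}=A+iB$ with $A,B\in R[s_{0},\dots,s_{n},t_{0},\dots,t_{n}]$. For $a=u+iv$ with $u,v\in R$, the equation $h(a)=0$ becomes the polynomial system $A(u,\dots,\delta^{n}u,v,\dots,\delta^{n}v)=B(\cdots)=0$, and, $h^{*}$ being holomorphic in $x_{n}$, the Cauchy--Riemann equations give $\det\frac{\partial(A,B)}{\partial(s_{n},t_{n})}=\bigl|\partial h^{*}/\partial x_{n}\bigr|^{2}\neq0$ at $b$. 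So near $b$ this is a determined system of two order-$n$ differential equations, solvable for the top derivatives $\delta^{n}u,\delta^{n}v$; eliminating $v$ by differential elimination (differential resultants, using the second equation and its derivatives to remove $\delta^{k}v$ for $k\geq n$) reduces it to a single differential polynomial equation $F(y_{1})=0$ over $R$, together with a formula for $v,\dots,\delta^{n-1}v$ in terms of the jet of $u$, valid near $b$. The point $b$ induces a simple point of $\{F^{*}=0\}$ with nonvanishing derivative in its top variable; imposing as the $q_{i}$'s the box constraints $\varepsilon^{2}-(x_{k}-b_{k})^{2}\succ0$ (strict, of order $\leq\ord F$) then yields $u\in R$ whose jet is within $\varepsilon$ of that of $b$, hence $v\in R$ by the elimination formula, with its jet also near that of $b$, so $q^{*}\neq0$ there by continuity. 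Then $a:=u+iv$ satisfies $h(a)=0$, hence $p(a)=0$, and $q(a)\neq0$.

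The main obstacle is exactly this passage from the $2\times2$ real system to a single differential equation: Singer's condition and Remark \ref{densitythm1} speak of one differential polynomial in one unknown, so one must actually perform the differential elimination and check that it preserves the simple-point hypothesis and the strict positivity of the $q_{i}$'s, and that a zero of $F$ together with the recovered $v$ genuinely gives a zero of the original system near $b$ (the usual subtlety that resultants may acquire spurious factors, here controlled by working near the simple point $b$). Alternatively one can invoke the known equivalence of Singer's axiomatization with the variant permitting finite systems of differential equations, which is the route taken in \cite{Singer2}; but even then I would regard this reduction, with all hypotheses kept intact, as the genuinely delicate point.
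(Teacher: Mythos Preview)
The paper does not prove this proposition at all; it simply records it as known and cites \cite{Singer2}. So there is no proof in the paper to compare your attempt against.

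Your outline is reasonable up to the point you yourself flag, and that point is a genuine gap. Singer's axiom for an ordered differentially closed field concerns a \emph{single} differential polynomial in one unknown, while splitting $h(a)=0$ into real and imaginary parts produces a coupled $2\times 2$ system $A=B=0$ in $(u,v)$. The elimination you sketch---collapsing this to one equation $F(y)=0$ in $u$ with a recovery formula for the jet of $v$---needs real work on two fronts. First, the elimination typically raises the order (to something like $2n$), and the box constraints on $\delta^{k}v$, once rewritten through the recovery formula, involve derivatives of $u$ of order possibly beyond $n$; you must verify these still satisfy $\ord q_i\le\ord F$ so that Singer's condition applies. Second, the recovery formula for $v$ comes from an implicit-function step and is algebraic rather than polynomial, so the translated constraints have to be recast as genuine polynomial sign conditions, and you must check that a zero of $F$ near the induced simple point, together with the recovered $v$, actually lies on $\{A=B=0\}$ rather than on a spurious resultant component. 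None of this is hopeless, but it is not routine either; as you correctly note, Singer's own argument in \cite{Singer2} bypasses it by working with an equivalent axiomatization that accommodates finite systems directly.
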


From the above and Proposition \ref{differentiallyclosedtranscendence} we have

\begin{cor}\label{orderedtranscendence}
Assume that $(R,\delta)$ is an ordered differentially closed field. Then the transcendence degree $\operatorname{trdeg}_\eq R$ of $R$ over $\eq$ is infinite. 
\end{cor}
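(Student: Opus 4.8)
The plan is to reduce Corollary \ref{orderedtranscendence} directly to the two results that immediately precede it. Assume $(R,\delta)$ is an ordered differentially closed field with $\delta\ne 0$. First I would invoke Proposition \ref{closedordereddifferential} to pass to the field $(R(i),\delta^*)$, which is a differentially closed field of characteristic zero; here $i^2=-1$ and $\delta^*$ is the natural extension of $\delta$ described there. The key point is that $\delta^*\ne 0$: indeed $\delta^*|_R=\delta$, so if $\delta\ne 0$ there is $a\in R\subseteq R(i)$ with $\delta^*(a)=\delta(a)\ne 0$.

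Next I would apply Proposition \ref{differentiallyclosedtranscendence} to $(R(i),\delta^*)$, which yields $\operatorname{trdeg}_\eq R(i)=\infty$. Finally, since $R(i)$ is algebraic over $R$ (it is obtained by adjoining a single element $i$ satisfying $i^2+1=0$), we have $\operatorname{trdeg}_\eq R(i)=\operatorname{trdeg}_\eq R$, so $\operatorname{trdeg}_\eq R$ is infinite as claimed.

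There is essentially no obstacle here; the statement is a formal consequence of the cited propositions together with the elementary facts that the extension $R(i)/R$ is algebraic and that $\delta^*$ restricts to $\delta$ on $R$. The only small thing to spell out is that $\delta^*$ is genuinely nonzero, which is immediate from the explicit formula $\delta^*(f_1+if_2)=\delta(f_1)+i\delta(f_2)$. I would keep the write-up to a couple of sentences.
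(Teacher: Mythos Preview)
Your proposal is correct and follows exactly the route the paper indicates: the corollary is stated as an immediate consequence of Proposition~\ref{closedordereddifferential} and Proposition~\ref{differentiallyclosedtranscendence}, and you have spelled out precisely this derivation, including the minor verifications that $\delta^*\ne 0$ and that $R(i)/R$ is algebraic.
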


{\color{black}Let $R$ be a real closed field ordered by $\succ$. For $x=(x_1,\ldots,x_n)\in R^n$ we denote $\|x\|=\sqrt{x_1^2+\cdots+x_n^2}$. The  \emph{Euclidean topology} in $R^n$ is the topology for which the open balls $B(x,r):=\{y\in R^n:\|x-y\|\prec r\}$, $x\in R^n$, $r\in R$, $r\succ 0$, form a
basis of open subsets (see \cite[Definition 2.1.9]{BochnakCosteRoy}). Polynomials are continuous with respect to the Euclidean topology.}

\begin{prop}\label{Hoeven}
Let $(R,\delta)$ be an ordered differentially closed field, ordered by $\succ$.\linebreak Then for any 
 $p\in R\{y\}$ and any $a,b\in R$ such that $a\prec b$ and $p(a)p(b)\prec 0$ there exists $c\in R$ such that $a\prec c\prec b$ and $p(c)=0$. {\color{black}Moreover, if $n=\ord p$ then $c$ can be chosen in such a way that ${\bf c}\in B({\bf s},\|{\bf s}-{\bf a}\|)$, where ${\bf c}=(c,\delta(c),\ldots,\delta^n(c))$, ${\bf s}=\frac{1}{2}({\bf a}+{\bf b})$ and ${\bf a}=(a,\delta(a),\ldots,\delta^n(a))$, ${\bf b}= (b,\delta(b),\ldots,\delta^n(b))$.} 
\end{prop}

{\color{black}
\begin{proof}
Take any $p\in R\{y\}$, $n=\ord p$, and $a,b\in R$ as in the assumption. Let $p^*\in R[x_0,\ldots,x_n]$ be the unique polynomial such that  $p(y)=p^*(y_{\delta^0},y_{\delta},\ldots,y_{\delta^n})$, and let ${\bf a}=(a,\delta(a),\ldots,\delta^n(a))$, ${\bf b}= (b,\delta(b),\ldots,\delta^n(b))$, ${\bf s}=\frac{1}{2}({\bf a}+{\bf b})$.

One can assume that the polynomial $p^*$ is irreducible in $R[x_0,\ldots,x_n]$ and its degree with respect to $x_n$ is positive.  Since $p^*({\bf a})p^*({\bf b})\prec 0$, the sign of the polynomial $p^*$ changes in $R^{n+1}$ and by \cite[Theorem 4.5.1]{BochnakCosteRoy}, the ideal $(p^*)\subset R[x_0,\ldots,x_n]$ is real, prime and it is the ideal of polynomials 
 vanishing on the hypersurface $V=\{x\in R^{n+1}:p^*(x)=0\}$. Moreover, $\dim V=n$. Consequently, $\frac{\partial p^*}{\partial x_n}\notin (p^*)$ and $\dim W<n$, where  $W=\{x\in V :\frac{\partial p^*}{\partial x_n}(x)=0\}$.

Take polynomials $q_1,q_2\in R\{y\}$ defined by $q_1(y)=\|{\bf s}-{\bf a}\|^2-\|(y_{\delta^0},\ldots,y_{\delta^n})-{\bf s}\|^2$, $q_2(y)=(y-a)(b-y)$. 
Then $\ord q_1=n$, $\ord q_2=0$ and $q_1(y)=q_1^*(y_{\delta^0},y_{\delta},\ldots,y_{\delta^n})$ for $q_1^*(x_0,\ldots,x_n)=\|{\bf s}-{\bf a}\|^2-\|(x_0,\ldots,x_n)-{\bf s}\|^2$, and $q_2(y)=q_2^*(y_{\delta^0},y_{\delta},\ldots,y_{\delta^n})$ for $q_2^*(x_{0},\ldots,x_n)=(x_0-a)(b-x_0)$,  and $B({\bf s},\|{\bf s}-{\bf a}\|)=\{x\in R^{n+1}:q_1^*(x)\succ 0\}$. 

Take a polynomial $r\in R[t]$  defined by $r(t)=p^*(t{\bf a}+(1-t){\bf b})$. 
By the assumptions,  $r(0)r(1)\prec 0$. Since $R$ is real closed, there exists $t_0\in R$ with $0\prec t_0\prec 1$ such that $r(t_0)=0$ (see  \cite[Proposition 1.2.4]{BochnakCosteRoy}). We may assume that $r$ changes sign at the point $t_0$ (i.e., $r(t_0)=0$  and $r$ takes positive and negative values in any neighbourhood of $t_0$). Put ${\bf a}_0=t_0{\bf a}+(1-t_0){\bf b}$. Then ${\bf a}_0\in B({\bf s},\|{\bf s}-{\bf a}\|)$, $p^*({\bf a}_0)=0$ and $q_2^*({\bf a}_0)\succ 0$. So, there exists $\ve\succ 0$ such that  $B({\bf a}_0,\ve)\subset B({\bf s},\|{\bf s}-{\bf a}\|)$  and $q_2^*(x)\succ 0$ for $x\in B({\bf a}_0,\ve)$. Set $B=B({\bf a}_0,\ve)$. 

Let $U_1=\{x\in R^{n+1}:p^*(x)\succ 0\}$ and $U_2=\{x\in R^{n+1}:p^*(x)\prec 0\}$. Since $p^*$ is irreducible and changes sign in $B$, 
 we have $U_1\cap B\ne\emptyset$ and $U_2\cap B\ne \emptyset$. So, by \cite[Lemma 3.4.2]{BochnakCosteRoy}, $\dim (V\cap B)=\dim(B\setminus (U_1\cup U_2))=n$. Since $\dim W<n$, there exists ${\bf a}_1\in (V\cap B)\setminus W$, and so $p^*({\bf a}_1)=0$, $\frac{\partial p^*}{\partial x_n}({\bf a}_1)\ne 0$ and $q_j^*({\bf a}_1) \succ 0$, and obviously $\ord q_j \le \ord p$ for $j=1,2$. Then by definition of ordered differentially closed field, there exists $c\in R$ such that $p(c)=0$ and $q_j(c)\succ 0$, $j=1,2$. Consequently, ${\bf c}\in B$ and $a\prec c\prec b$, which completes the proof.\end{proof}

J. van der Hoeven  \cite{Hoeven} proved that the field $\mathbb{T}$ of transseries satisfies the first part ot the assertion of Proposition \ref{Hoeven}. 
It is not clear whether the converse of the van der Hoeven result holds for real differential fields. Nevertheless, we have the converse of the ``moreover'' part of  Proposition \ref{Hoeven}.

\begin{cor}\label{corequivrealdiffclosed}
Let $(R,\delta)$ be an ordered differential field, ordered by $\succ$. Assume that  $R$ is real closed. Then the following conditions are equivalent:

{\rm (a)} $(R,\delta)$ is an ordered differentially closed field.

{\rm (b)} For any $p\in R\{y\}$, $n=\ord p\ge 0$, and any ${\bf a},{\bf b}\in R^{n+1}$ with  $p^*({\bf a})p^*({\bf b})\prec 0$, there exists $c\in R$ such that $p(c)=0$ and ${\bf c}\in B({\bf s}, \|{\bf s}-{\bf a}\|)$, where ${\bf c}=(c,\delta(c),\ldots,\delta^n(c))$ and ${\bf s}=\frac{1}{2}({\bf a}+{\bf b})$. 

{\rm (c)} For any $p\in R\{y\}$, $n=\ord p\ge 0$, any  ${\bf a}\in R^{n+1}$ at which $p^*$ changes sign, and any ball $B({\bf a},r)$, $r\succ 0$, there exists $c\in R$ such that $p(c)=0$ and ${\bf c}\in B({\bf a},r)$, where ${\bf c}=(c,\delta(c),\ldots,\delta^n(c))$.

{\rm (d)} For any $p\in R\{y\}$, $n=\ord p\ge 0$, any ${\bf a}\in R^{n+1}$ with  $p^*({\bf a})=0$, $\frac{\partial p^*}{\partial x_n}({\bf a})\ne 0$, and any ball $B({\bf a},r)$, $r\succ 0$, there exists $c\in R$ such that $p(c)=0$ and ${\bf c}\in B({\bf a},r)$, where ${\bf c}=(c,\delta(c),\ldots,\delta^n(c))$.
\end{cor}

\begin{proof}  
The implication (a) $\Rightarrow$ (b) is proved similarly to Proposition \ref{Hoeven}. The implication (c) $\Rightarrow$ (d) is obvious.

If $p\in R\{y\}$ with $n=\ord p\ge 0$ changes sign at ${\bf a}\in R^{n+1}$, 
then for any ball $B({\bf a},r)$ with $r\succ 0$ there are ${\bf a}_1,{\bf b}_1\in B({\bf a},r)$ such that $p^*({\bf a}_1)p^*({\bf b}_1)\prec 0$ and $B({\bf s},\|{\bf s}-{\bf a}_1\|)\subset B({\bf a},r)$, where ${\bf s}=\frac{1}{2}({\bf a}_1+{\bf b}_1)$. Thus, (b) gives (c).

Take any $p,q_1,\ldots,q_m\in R\{y\}$ with $n=\ord p\ge \ord q_j$, $1\le j\le m$, and $p^*({\bf a})=0$, $\frac{\partial p^*}{\partial x_n}({\bf a})\ne 0$, and $q_j({\bf a})\succ 0$, $1\le j\le m$, for some ${\bf a}\in R^{n+1}$. 
Since polynomials are continuous in the Euclidean topology in $R^{n+1}$, there exists a ball $B({\bf a},r)$, $r\succ 0$, such that $q_i(x)\succ 0$ for $x\in B({\bf a},r)$, $1\le j \le m$. Thus (d) gives (a).
\end{proof}}

\section{Semialgebraic preliminaries}\label{preliminaries}

\subsection{$\eq$-algebraic and semialgebraic sets} 
Let us recall some facts from \cite{Sppjm} concerning algebraic and semialgebraic sets in a space of infinite dimensions.

{\color{black}Let $\K=\er$ or $\K=\cc$.}  
Let $T$ be a nonempty set. We denote by $\La_T=(\La_ t: t\in T)$  a system of independent variables, and by $\K[\La_T]$ and $\K(\La_T)$  the ring of polynomials in the variables of $\La_T$ over $\K$ and its quotient field, respectively. More precisely, for any $P\in\K(\La_T)$ we have $P\in \K(\La_{ t_1},\ldots,\La_{ t_m})$ for some finitely many $ t_1,\ldots,  t_m\in T$. 

 We denote by $\K^T$ the set of all functions $T\to \K$ equipped with the {\color{black}product} topology. Then 
  all projections $\K^T\ni x\mapsto x( t)\in\K$, $ t\in T$, are continuous. 

A subset of $\K^T$ is called \emph{$\eq$-algebraic} 
when it is defined by a finite system of equations $P=0$, where $P\in \eq[\La_T]$. Any $\eq$-algebraic set in $\K^T$ is of the form $\{x\in\K^T:(x( t_1),\ldots,x( t_m))\in V\}$, where $m\in\en$, $ t_1,\ldots, t_m\in T$ and $V\subset \K^m$ is  a $\eq$-algebraic subset of $\K^m$ {\color{black}(a complex $\eq$-algebraic set if $\K=\cc$)}. 

{\color{black}Note that any $\eq$-algebraic subset of $\cc^T$ is also $\eq$-algebraic in $(\er^T)^2$. However, a $\eq$-algebraic set in $(\er^T)^2$ is also $\eq$-algebraic in $\cc^T$ only if it is $\eq$-algebraic as a complex algebraic set.} 

A subset of $\K^T$ (if $\K=\cc$ we identify $\cc^T$ with $(\er^T)^2$) is called {\it $\eq$-semialgebraic} when it is defined by a {\color{black}finite boolean combination} 
of inequalities $P>0$ or $P\ge  0$, where $P\in\eq[\La_T]$ ($P\in\eq[\La_T,\La'_T]$, where {\color{black} $\La'_T=(\La'_t:t\in T)$ is a system of independent variables and $\La_T,\La'_T$ represent real and imaginary parts of complex numbers}, 
if $\K= \cc$). Analogously to the above, any $\eq$-semialgebraic set in $\K^T$ is of the form $\{x\in\K^T:(x( t_1),\ldots,x( t_m))\in X\}$, where $m\in\en$, $ t_1,\ldots, t_m\in T$ and $X\subset \K^m$ is a $\eq$-semialgebraic subset of $\K^m$.

From the basic properties of algebraic and semialgebraic sets in finite-dimensional real vector spaces (see \cite{BenedettiRisler}, \cite{BochnakCosteRoy}, \cite{BochnakEfroymson}, \cite{PrestelDelzell}) we obtain

\begin{prop}\label{semi1}

{\rm (a)} The family of $\eq$-algebraic sets in $\K^T$ is closed with respect to finite unions and intersections. 

{\rm (b)} The family of $\eq$-semialgebraic sets in $\K^T$ is closed with respect to complements and finite unions and intersections. 

{\rm (c)} {\rm({Tarski-Seidenberg}).} Let $\pi_{ t_1,\ldots, t_m} :\er^T\ni x\mapsto (x( t_1),\ldots,x( t_m))\in\er^m$, where $ t_1,\ldots,  t_m\in T$. If $X\subset \er^T$ and $Y\subset \er^m$ are $\eq$-semialgebraic, then so are $\pi_{ t_1,\ldots, t_m} (X)$ and $\pi_{ t_1,\ldots, t_m}^{-1}(Y)$.

{\rm (d)} For any $\eq$-semialgebraic set $X\subset \er^T$, the interior $\Int X$, closure $\overline{X}$ and the boundary $\Fr X$ are $\eq$-semialgebraic. 

{\rm (e)} Every connected component of a $\eq$-semialgebraic subset of $\er^T$ is $\eq$-semi\-alge\-braic.
\end{prop}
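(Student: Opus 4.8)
The plan is to use the structural fact recalled just above the statement: every $\LL$-algebraic (resp.\ $\LL$-semialgebraic) subset $S$ of $\K^\Deltaa$ is a \emph{cylinder}, i.e.\ $S=\pi_{\deltaa_1,\ldots,\deltaa_m}^{-1}(S_0)$ for some finite tuple $\deltaa_1,\ldots,\deltaa_m\in\Deltaa$ and some $\LL$-algebraic (resp.\ $\LL$-semialgebraic) $S_0\subset\K^m$, where $\pi_{\deltaa_1,\ldots,\deltaa_m}\colon\K^\Deltaa\to\K^m$ is the (surjective, continuous, open) coordinate projection. All five assertions then reduce to the classical finite-dimensional theory cited before the proposition.

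For (a) and (b): given finitely many such sets $S^{(1)},\ldots,S^{(r)}$, let $F\subset\Deltaa$ be the (finite) union of all index tuples occurring; writing $F=\{\deltaa_1,\ldots,\deltaa_M\}$ and pulling each $S_0^{(i)}$ back along the appropriate coordinate projection $\K^M\to\K^{m_i}$, we get $\LL$-algebraic (resp.\ $\LL$-semialgebraic) sets $\tilde S^{(i)}\subset\K^M$ with $S^{(i)}=\pi^{-1}(\tilde S^{(i)})$, $\pi:=\pi_{\deltaa_1,\ldots,\deltaa_M}$. Since $\pi^{-1}$ commutes with unions, intersections and complements, $\bigcup_i S^{(i)}=\pi^{-1}(\bigcup_i\tilde S^{(i)})$, $\bigcap_i S^{(i)}=\pi^{-1}(\bigcap_i\tilde S^{(i)})$ and $\K^\Deltaa\setminus S^{(1)}=\pi^{-1}(\K^M\setminus\tilde S^{(1)})$; the corresponding operations on the bases stay in the classical algebraic, resp.\ semialgebraic, category in $\K^M$ (for $\K=\cc$ using $\cc^M\cong\er^{2M}$ in the semialgebraic case), which is exactly the required finite-dimensional form. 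For (c): if $X\subset\er^\Deltaa$ is $\LL$-semialgebraic, write $X=\pi_{\deltaa_1,\ldots,\deltaa_k}^{-1}(X_0)$; after enlarging the index set and reordering we may assume $\deltaa_1,\ldots,\deltaa_m$ are its first $m$ entries, and then $\pi_{\deltaa_1,\ldots,\deltaa_m}(X)=\rho(X_0)$ with $\rho\colon\er^k\to\er^m$ the projection onto the first $m$ coordinates, which is $\LL$-semialgebraic by the finite-dimensional Tarski--Seidenberg theorem; and $\pi_{\deltaa_1,\ldots,\deltaa_m}^{-1}(Y)$ for $\LL$-semialgebraic $Y\subset\er^m$ is $\LL$-semialgebraic straight from the definition.

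For (d) and (e) the only extra ingredient is elementary point-set topology of the product space $\er^\Deltaa$. I will prove that for a cylinder $C=\pi^{-1}(V)$, $\pi=\pi_{\deltaa_1,\ldots,\deltaa_m}$, one has $\overline C=\pi^{-1}(\overline V)$, $\Int C=\pi^{-1}(\Int V)$ and hence $\Fr C=\pi^{-1}(\Fr V)$: the inclusions $\pi^{-1}(\overline V)\supseteq\overline C$ and $\pi^{-1}(\Int V)\subseteq\Int C$ are immediate from continuity, and the reverse inclusions follow because every basic open neighbourhood in $\er^\Deltaa$ can be refined to one of the form $W_1\times\er^{\Deltaa\setminus F}$ with $F\supseteq\{\deltaa_1,\ldots,\deltaa_m\}$ finite and $W_1$ open in the finite-dimensional coordinate space $\er^F$. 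Combined with the finite-dimensional fact that $\overline V,\Int V,\Fr V$ are $\LL$-semialgebraic, this gives (d). For (e), since $\er^{\Deltaa\setminus\{\deltaa_1,\ldots,\deltaa_m\}}$ is connected (a product of connected spaces), the connected components of $C\cong V\times\er^{\Deltaa\setminus\{\deltaa_1,\ldots,\deltaa_m\}}$ are exactly the sets $\pi^{-1}(V_j)$, where $V_1,\ldots,V_s$ are the finitely many $\LL$-semialgebraic connected components of $V$; each $\pi^{-1}(V_j)$ is then $\LL$-semialgebraic.

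I expect no real obstacle here: the bulk of the work is the bookkeeping of the reduction to a common finite coordinate space, and the only non-formal points are the two identities $\overline{\pi^{-1}(V)}=\pi^{-1}(\overline V)$, $\Int\pi^{-1}(V)=\pi^{-1}(\Int V)$ in (d) and the product description of connected components in (e) — and both are standard facts about the product topology. All the substantive content (closure under Boolean operations, Tarski--Seidenberg, semialgebraicity of closure/interior/boundary, finiteness and semialgebraicity of connected components) is imported verbatim from the classical finite-dimensional theory.
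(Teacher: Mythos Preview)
Your proposal is correct and follows exactly the approach the paper intends: the paper gives no explicit proof, merely stating that the proposition follows ``from the basic properties of algebraic and semialgebraic sets in finite-dimensional real vector spaces'', and your argument spells out precisely this reduction via the cylinder description recalled just before the statement. The bookkeeping and the topological identities you supply for (d) and (e) are the expected details.
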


\subsection{$c$-filters}\label{cfilterssection}
Let $\K=\er$ or $\K=\cc$. A family $\varOmega$ of subsets of $\K^T$ satisfying the following conditions:

\noindent{\rm(i)}  any $U\in{\varOmega}$ is a nonempty open connected $\eq$-semialgebraic set,

\noindent{\rm(ii)} for any $\eq$-algebraic set $V\varsubsetneq \K^T$ there exists $U\in{\varOmega}$ such that $V\cap U=\emptyset$,

\noindent{\rm(iii)} for any $U_1,U_2\in {\varOmega}$ there exists $U_3\in \varOmega$ such that $U_3\subset U_1\cap U_2$,

{\color{black}\noindent{\rm(iv)} 
for any $U\in\varOmega$ there exist an open connected and simply connected\linebreak\indent\indent  $\eq$-semialgebraic set $U_0\subset \K^T$ and a set $U'\in\varOmega$ such that $U'\subset U_0\subset U$,}

\noindent will be  called a  \emph{$c$-filter} in $\K^T$ {\color{black}(cf. \cite{Sppjm})}.

{\color{black} Condition (iv) in the definition of $c$-filter is necessary in the construction of the $\cc$-field of Nash functions (see Section \ref{fieldofNashfunct}), because
the construction  is based on the monodromy theorem (see \cite[Theorem 2.4]{Spodzieja2} and \cite[Remark 5.6]{Sppjm}). In \cite{Sppjm} we used $c$-filters only in the real space $\er^T$ and we omitted  condition (iv), because it was unnecessary. In the real case a $\eq$-Nash function $ f: U \to \er $ such that $ P (\la, f (\la)) = 0 $, where $P\in \eq[\La_T,Z]$, is defined by $f(\la)=\xi_i(\la)$, $\la\in U$, for fixed $1\le i\le m$, where $ \xi_1(\la) <\cdots <\xi_m(\la) $ are roots of the polynomial $ P (\la, Z) $, provided the resultant of $P$ with respect to $Z$ has no zeros in $U$. However, in the real case we can use the results from \cite{Sppjm}, because then  condition (iv) follows from the others. Namely we have

\begin{prop}\label{simplyconnected}
Let $\varOmega$ be a family of subsets of $\er^T$ satisfying  conditions {\rm (i), (ii) and (iii)} in the definition of $c$-filter. Then $\varOmega$ also satisfies  condition {\rm(iv)}.
\end{prop}}
\begin{proof}
Take any $U\in\varOmega$. Then there exist $m\in\en$ and $t_1,\ldots,t_m\in T$  such that $U=\{x\in\er^T:(x( t_1),\ldots,x( t_m))\in X\}$ for some open connected $\eq$-semialgebraic set $X\subset \er^m$. Take a 
cylindrical decomposition $S_1,\ldots,S_\nu$ of $\er^m$ into $\eq$-semialgebraic sets adapted to the set $X$ {\color{black}(see \cite[Theorem 5.6 and Algorithm 11.15]{Basu})}. 
 We may assume that $X=\bigcup _{j=1}^n S_j$ for some $n\le \nu$, and  $\Int S_1,\ldots,\Int S_\ell\ne \emptyset$, while $\Int S_{\ell+1}=\cdots=\Int S_n=\emptyset$. Then $\overline{X}=\bigcup_{j=1}^\ell\overline{S_j}$, and $S_1,\ldots, S_\ell$ are open connected and simply connected $\eq$-semialgebraic sets. Moreover, $\bigcup _{j=1}^\ell\Fr S_j$  
is contained in some proper $\eq$-algebraic subset $W$ of~$\er^m$, {\color{black}where $\Fr S_j$ denotes the boundary of $S_j$}. Set 
\begin{equation*}
\begin{split}
U_j&=\{x\in\er^T:(x( t_1),\ldots,x( t_m))\in S_j\},\quad j=1,\ldots, \ell,\\
V&=\{x\in\er^T:(x( t_1),\ldots,x( t_m))\in W\}.
\end{split}
\end{equation*}
Then $U\setminus V\subset \sum_{j=1}^\ell U_j\subset U$, and by conditions (ii) and (iii) in the definition of $c$-filter,  there exists $U'\in\varOmega $ such that $U'\subset U\setminus V$, and by (i), $U'\subset U_j$ for some $j\in\{1,\ldots,\ell\}$. Then taking $U_0=U_j$ we deduce the assertion.
 \end{proof}

In the real case we have the following property of $c$-filters.
      
\begin{prop}[{\cite[ Proposition 2.1]{Sppjm}}]\label{Corollary1}
For any $c$-filter $\varOmega $ of subsets of $\er^T$,  
 the set $\partial \varOmega:= \bigcap_{U\in \varOmega}\overline{U}$ has at most one point. 
\end{prop}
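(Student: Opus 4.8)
The plan is to argue by contradiction. Suppose $x,y\in\bigcap_{U\in\varOmega}\ov U$ with $x\ne y$. Since $x$ and $y$ are distinct functions $\Deltaa\to\er$, there is a coordinate $t\in\Deltaa$ with $x(t)\ne y(t)$; relabelling if necessary, I may assume $x(t)<y(t)$. As $\eq$ is dense in $\er$, I then choose $c\in\eq$ with $x(t)<c<y(t)$.

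Next I would look at the rational hyperplane $V=\{z\in\er^\Deltaa:z(t)=c\}$. This is an $\eq$-algebraic subset of $\er^\Deltaa$, being defined by the single equation $\La_t-c=0$ with $\La_t-c\in\eq[\La_\Deltaa]$, and it is a proper subset (any $z\in\er^\Deltaa$ with $z(t)=c+1$ lies outside $V$). Hence, by condition (ii) in the definition of a $c$-filter, there exists $U_0\in\varOmega$ with $U_0\cap V=\emptyset$; that is, $z(t)\ne c$ for every $z\in U_0$.

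The key step is a connectedness/separation argument. Let $\pi_t\colon\er^\Deltaa\to\er$ be the projection $z\mapsto z(t)$, which is continuous by the very definition of the topology on $\er^\Deltaa$. Since $U_0$ is connected (condition (i)), $\pi_t(U_0)$ is a connected subset of $\er\setminus\{c\}$, hence is contained either in $(-\infty,c)$ or in $(c,+\infty)$. By continuity, $\pi_t(\ov{U_0})\subset\ov{\pi_t(U_0)}$, so $\ov{U_0}$ is contained either in $\{z:z(t)\le c\}$ or in $\{z:z(t)\ge c\}$. In the first case, $y\in\ov{U_0}$ forces $y(t)\le c<y(t)$; in the second case, $x\in\ov{U_0}$ forces $x(t)\ge c>x(t)$. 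Either way we reach a contradiction, because $x,y\in\bigcap_{U\in\varOmega}\ov U\subset\ov{U_0}$. Therefore $\bigcap_{U\in\varOmega}\ov U$ has at most one point.

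The argument is short, and the only point needing a little care is the separation step: that a connected set disjoint from a rational hyperplane lies entirely on one side of it, and that this property passes to the closure in the product topology. I note that condition (iii) (downward directedness of $\varOmega$) is not needed for this statement — only (i) and (ii) are used.
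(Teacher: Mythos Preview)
Your argument is correct. The paper itself does not supply a proof of this proposition: it is quoted verbatim from \cite[Proposition~1.1]{Sppjm}, so there is no in-paper proof to compare against. Your approach---separating two putative points of $\bigcap_{U\in\varOmega}\overline U$ by a rational hyperplane $\{\La_t=c\}$, invoking condition~(ii) to find $U_0\in\varOmega$ disjoint from it, and then using connectedness of $U_0$ together with continuity of the coordinate projection $\pi_t$ to force $\overline{U_0}$ to lie on one side of the hyperplane---is the natural argument and almost certainly coincides with the one in the cited source. Your observation that only conditions~(i) and~(ii) are used, and not the filter condition~(iii), is also correct.
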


The assertion of Proposition \ref{Corollary1} fails in the complex case (see {\color{black}Remark \ref{rempartialWTC}} in Section \ref{ordering in Rm}). However, it does hold for some $c$-filters in $\cc^T$. Namely, let 
 $T\subset \er$ be a set algebraically independent over $\eq$, and let ${\bf x}_0\in \er^T$ be  defined by ${\bf x}_0(t)=t$ for $t\in T$. Then there exists a $c$-filter $\varOmega^\K_{{\bf x}_0}$ of subsets of $\K^T$ 
  of the form
\begin{equation}\label{defarchordcenbtered}
U=\{x\in\K^T:|x(t_j)-\tilde x_j|<\varepsilon,\;j=1,\ldots, m\},
\end{equation}
for any $t_1,\ldots,t_m\in T$ with $t_1<\cdots<t_m$ and $\varepsilon\in \eq_{+}$ and $\tilde x=(\tilde x_1,\ldots,\tilde x_m)\in \eq^m$ such that $|t_j-\tilde x_j|<\varepsilon$ for  $j=1,\ldots,m$ (and so ${\bf x}_0\in U$).  

A $c$-filter $\varOmega$ in $\K^T$ such that ${\bf x}_0\in U$ for any $U\in \varOmega$ will be called \emph{centered} at $\bf x_0$.

\subsection{Field of Nash functions}\label{fieldofNashfunct} 

A function $f:U\to\er$, where $U\subset \er^T$ is an open $\eq$-semialgebraic set, is called a $\eq$-{\it Nash function} if $f$ is {\color{black} real}  analytic and  there exists a nonzero polynomial $P\in \eq[\La_T,Z]$ such that $P(\la,f(\la))=0$ for $\la\in U$.  In fact $f$ depends on a finite number of variables, so the analyticity of $f$ is clear. The ring of $\eq$-Nash functions in $U$ is denoted by 
${\mathcal N}^{\er}_{}(U)$.

A function $f:U\to\cc$, where $U\subset \cc^T$ is an open $\eq$-semialgebraic set (as a subset of $\er^T\times \er^T$), is called a $\cc$-$\eq$-{\it Nash function} if $f$ is holomorphic and there exists a nonzero polynomial $P\in \eq[\La_T,Z]$ such that $P(\la,f(\la))=0$ for $\la\in U$. The ring of $\cc$-$\eq$-Nash functions in $U$ is denoted by ${\mathcal N}^{\cc}_{}(U)$.  

{\color{black}Any nonzero polynomial $P\in \eq[\La_T,Z]$ determines at most $\deg_Z P$ $\eq$-Nash functions in a nonempty open connected $\eq$-semialgebraic set $U\subset \er^T$ (respectively $\cc$-$\eq$-Nash functions  in a nonempty open connected $\eq$-semialgebraic set $U\subset \cc^T$).}

For the basic properties of Nash functions and semialgebraic sets in finite-dimensional vector spaces see for instance \cite{BenedettiRisler}, \cite{BochnakCosteRoy}, \cite{BochnakEfroymson}, \cite{Nash}.  From these properties we immediately obtain:

\begin{prop}\label{Nash1} Let $\K=\er$ or $\K=\cc$, and let $U\subset \K^T$ be a nonempty open connected $\eq$-semialgebraic set. Then the ring ${\mathcal N}^{\K}_{}(U)$ is a domain. 
\end{prop}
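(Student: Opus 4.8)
The plan is to reduce the statement to the classical fact that the ring of $\K$-valued Nash functions on a connected (semialgebraic) open subset of a \emph{finite-dimensional} space $\K^m$ is an integral domain, and then bootstrap from finite dimension to the index set $\Deltaa$ by the now-standard observation that every Nash function on $U\subset\K^\Deltaa$ factors through a projection to some $\K^m$.

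First I would invoke the description of semialgebraic sets recalled just before the statement: since $U$ is a nonempty open connected $\eq$-semialgebraic subset of $\K^\Deltaa$, there are finitely many $t_1,\ldots,t_m\in\Deltaa$ and a nonempty open $\eq$-semialgebraic set $X\subset\K^m$ with $U=\pi^{-1}(X)$, where $\pi=\pi_{t_1,\ldots,t_m}$. Because $U$ is connected and $\pi$ is an open continuous surjection onto $X$ with connected fibres (the fibres are copies of $\K^{\Deltaa\setminus\{t_1,\ldots,t_m\}}$, which is connected), $X$ itself is connected. Next I would show that every $f\in{\mathcal N}^{\K}_{}(U)$ is of the form $g\circ\pi$ for some $g\in{\mathcal N}^{\K}_{}(X)$, after possibly enlarging the finite set of coordinates: the defining polynomial relation $P(\la,f(\la))=0$ with $P\in\eq[\La_\Deltaa,Z]$ involves only finitely many of the variables $\La_\deltaa$, so $f$ is analytic (holomorphic in the complex case) and locally constant in all the remaining directions; by connectedness of the fibres of the enlarged projection, $f$ depends only on those finitely many coordinates, hence descends to a Nash function $g$ on the image $X'\subset\K^{m'}$, which again is a nonempty open connected $\eq$-semialgebraic set.

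Given two nonzero $f_1,f_2\in{\mathcal N}^{\K}_{}(U)$ with $f_1f_2=0$, I would choose a common finite coordinate set $t_1,\ldots,t_{m'}$ so that $f_i=g_i\circ\pi'$ with $g_i\in{\mathcal N}^{\K}_{}(X')$; then $g_1g_2$ vanishes on $X'$ while each $g_i$ is nonzero there. This contradicts the finite-dimensional statement, for which I would cite the standard references already listed in the paper (\cite{BochnakCosteRoy}, \cite{BenedettiRisler}, \cite{BochnakEfroymson}, \cite{Nash}): on a connected open semialgebraic (indeed, on any connected open) set, a real-analytic or holomorphic function that is not identically zero has nowhere-dense zero set, so a product of two such functions cannot vanish identically, i.e. ${\mathcal N}^{\K}_{}(X')$ is a domain. (In the complex case this is the identity theorem; in the real case one uses that a nonzero real Nash function has a nowhere-dense, in fact lower-dimensional, zero set, or passes to the complexification.)

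The only genuinely delicate point is the descent step — verifying that a $\K$-$\eq$-Nash function on $U\subset\K^\Deltaa$ really factors through a finite projection with connected fibres. The subtlety is bookkeeping rather than depth: one must make sure that after enlarging the coordinate set to include all variables appearing in the witnessing polynomial $P$, the function is genuinely constant along each fibre, which follows because the fibre is connected, the function is analytic there, and its partial derivatives in the fibre directions vanish identically (as $P$ does not involve those variables, the implicit-function description of $f$ shows no dependence). Once this is in place, everything else is a direct transfer of the classical finite-dimensional fact, so I would keep that part brief and lean on the cited literature.
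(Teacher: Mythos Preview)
Your approach is correct and matches the paper's, which in fact gives no explicit proof at all: the proposition is stated as an immediate consequence of the basic finite-dimensional theory of Nash functions (citing \cite{BenedettiRisler}, \cite{BochnakCosteRoy}, \cite{BochnakEfroymson}, \cite{Nash}) together with the observation, made just before the definition, that any $f\in{\mathcal N}^{\K}(U)$ depends on only finitely many variables. Your write-up simply spells out this reduction; one small sharpening: the cleanest way to see constancy along a fibre is that $f$ is continuous on the connected fibre and takes values in the finite root set of the fixed polynomial $P(\lambda,\,\cdot\,)$, rather than via an implicit-function/derivative argument (which would require $\partial P/\partial Z\neq 0$).
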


By Proposition \ref{Nash1}, for any $c$-filter $\varOmega$ 
in $\K^T$ and any $U\in\varOmega$, the ring ${\mathcal N}^\K_{}(U)$ of $\eq$-Nash functions if $\K=\er$ or $\cc$-$\eq$-Nash functions if $\K=\cc$ on $U$ is a domain. In $\bigcup_{U\in \varOmega}{\mathcal N}_{}^\K(U)$ we introduce an equivalence relation by
$$
(f_1:U_1 \to \K )\sim (f_2:U_2 \to \K )\quad\hbox{iff}\quad f_1|_{U_3}=f_2|_{U_3}\hbox{ for some $U_3\in\varOmega$}.
$$
The $\sim$-equivalence class of  $f:U\to  \er $ will be denoted by $[f]_{\varOmega}$ or simply by $f$, and the set of all such classes  by ${\mathcal N}^\K_{\varOmega}$. The set ${\mathcal N}^\K_{\varOmega}$, together with the usual operations of addition and multiplication
$$
[f_1]_\varOmega+[f_2]_\varOmega=\bigl[ f_1|_{U}+f_2|_{U}\bigr]_\varOmega\,,\quad [f_1]_\varOmega\cdot [f_2]_\varOmega=\bigl[ f_1|_{U}f_2|_{U}\bigr]_\varOmega\,,
$$ 
where $f_1\in {\mathcal N}^\K_{}(U_1)$, $f_2\in {\mathcal N}^\K_{}(U_2)$ and $U\in\varOmega $, $U\subset U_1\cap U_2$, is a field, called the $\K$-\emph{field of Nash functions}.

{\color{black}From} \cite[Theorems 5.2 and Remark 5.6]{Sppjm} 
we obtain the following proposition. 

\begin{prop}\label{realandalgebraicclos}
Let $\varOmega$ be a $c$-filter 
 in $\K^T$.

{\rm (a)} If $\K=\er$, then the field $\mathcal N^\K_{\varOmega}$ is a real closure of the field $\eq(\La_T)$, where the  $c$-filter $\varOmega$ determines a  linear ordering $\succ_\varOmega$  in $\mathcal N^\K_{\varOmega}$  by (see Section \ref{orderingsineqlaDeltaa})
$$
f\succ_\varOmega g\quad\hbox{iff there exists }U\in \varOmega\hbox{ such that }f(x)>g(x)\hbox{ for all } x\in U.
$$

{\rm (b)} If $\K=\cc$, then the field $\mathcal N^\K_{\varOmega}$ is the algebraic  closure of the field $\eq(\La_T)$.
\end{prop}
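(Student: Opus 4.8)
The statement is a repackaging of results from \cite{Sppjm}, so the plan is to assemble the three ingredients characterising an algebraic (respectively real) closure of $k:=\eq(\La_\Deltaa)$ inside $\mathcal N^\K_\varOmega$: that $k$ embeds into $\mathcal N^\K_\varOmega$, that $\mathcal N^\K_\varOmega$ is algebraic over $k$, and that $\mathcal N^\K_\varOmega$ is algebraically closed (resp.\ that $\succ_\varOmega$ is a field ordering making $\mathcal N^\K_\varOmega$ real closed). The field structure of $\mathcal N^\K_\varOmega$ is already in hand, so only the last ingredient is substantial; it is exactly the content of \cite[Theorem 5.2 and Remark 5.6]{Sppjm}, and Proposition \ref{simplyconnected} is what allows those results to be applied, since they are proved for members of $\varOmega$ that are open, connected and simply connected.

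For the embedding, each coordinate function $\La_t$, $t\in\Deltaa$, is a $\eq$-Nash (resp.\ $\cc$-$\eq$-Nash) function on $\K^\Deltaa$, so $\La_t\mapsto[\La_t]_\varOmega$ induces a ring homomorphism $\eq[\La_\Deltaa]\to\mathcal N^\K_\varOmega$. It is injective: a nonzero $P\in\eq[\La_\Deltaa]$ has $\{P=0\}$ a proper $\eq$-algebraic subset of $\K^\Deltaa$, so by condition (ii) of the definition of a $c$-filter there is $U\in\varOmega$ with $\{P=0\}\cap U=\emptyset$, whence $[P]_\varOmega\ne 0$. Passing to fraction fields gives $k\hookrightarrow\mathcal N^\K_\varOmega$. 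Algebraicity over $k$ is immediate from the definition of a Nash function: every $f\in\mathcal N^\K(U)$ satisfies $P(\la,f(\la))=0$ on $U$ for some nonzero $P\in\eq[\La_\Deltaa,Z]$, so $[f]_\varOmega$ is a root of a nonzero polynomial over $k$.

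The core is algebraic (resp.\ real) closedness. Given a monic $Q(Z)=Z^n+a_{n-1}Z^{n-1}+\cdots+a_0\in\mathcal N^\K_\varOmega[Z]$ with $n\ge1$, I would choose representatives of all the $a_j$ in $\mathcal N^\K(U)$ for one common $U\in\varOmega$; since $\operatorname{char}\mathcal N^\K_\varOmega=0$, passing to the separable part $Q/\gcd(Q,Q')$ one may assume the $Z$-discriminant of $Q$ is nonzero in $\mathcal N^\K_\varOmega$, hence a Nash function on $U$ not identically zero, whose zero set therefore lies in a proper $\eq$-algebraic subset of $\K^\Deltaa$. Using conditions (ii) and (iii) I would first replace $U$ by a member of $\varOmega$ disjoint from that discriminant locus, and then apply Proposition \ref{simplyconnected} to get an open connected and simply connected $\eq$-semialgebraic set $U_0$, still disjoint from the locus, with $U'\subset U_0\subset U$ for some $U'\in\varOmega$. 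On $U_0$ the implicit function theorem yields locally a holomorphic (resp.\ analytic) solution of $Q(Z)=0$; by simple connectedness this germ continues analytically over all of $U_0$ to a single-valued Nash function (it is algebraic over $\eq(\La_\Deltaa)$, hence satisfies a nonzero equation over $\eq[\La_\Deltaa,Z]$), and its restriction to $U'$ is an element of $\mathcal N^\K_\varOmega$ annihilating $Q$. So $\mathcal N^\K_\varOmega$ is algebraically closed when $\K=\cc$. When $\K=\er$ the same mechanism — solving polynomial equations by Nash functions on connected simply connected semialgebraic domains — furnishes the real-closure axioms (positive elements are squares, odd-degree polynomials have roots), and one checks that $\succ_\varOmega$ is a linear field ordering: if $f-g\ne0$ in $\mathcal N^\er_\varOmega$ its zero set again lies in a proper $\eq$-algebraic set, so by condition (ii) and connectedness $f-g$ has constant sign on some $U\in\varOmega$, giving totality and antisymmetry, while compatibility with $+$ and $\cdot$ is read off from the definition. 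This is precisely \cite[Theorem 5.2 and Remark 5.6]{Sppjm}.

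I expect the main obstacle to be the globalisation of a root of $Q$ over a member of the $c$-filter: one must avoid the discriminant locus (so the root is locally defined and analytic) while staying inside $\varOmega$, and then propagate a local root over the whole domain — which is exactly where the simple connectedness supplied by Proposition \ref{simplyconnected}, together with the closure conditions (ii) and (iii), is indispensable. In the real case the extra delicacy is that $\succ_\varOmega$ must be shown to be a genuine field ordering for which $\mathcal N^\er_\varOmega$ is real closed, and not merely a partial relation; this again reduces to the fact that a nonzero Nash function keeps a constant sign on each connected member of $\varOmega$.
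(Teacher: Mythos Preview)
Your proposal is correct and follows the same approach as the paper, which does not give a self-contained proof but simply invokes \cite[Theorem 5.2 and Remark 5.6]{Sppjm} together with Proposition~\ref{simplyconnected}. You have in fact gone further by unpacking the argument behind those citations---the embedding of $\eq(\La_\Deltaa)$, algebraicity from the Nash condition, and the use of simple connectedness (supplied by Proposition~\ref{simplyconnected}) to globalise a local root off the discriminant locus---which is precisely the content of the cited results.
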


{\color{black}Note that in Proposition \ref{realandalgebraicclos} (b), the existence of solutions 
 of any equation $P(Z)=0$, where $P\in \mathcal N^\cc_{\varOmega}[Z]$, $\deg P>0$, follows from the monodromy theorem and the condition (iv) in the definition of $c$-filter (cf.  \cite[proof of Theorem 2.4]{Spodzieja2}).}

\subsection{Orderings in fields of real Nash functions}
\label{orderingsineqlaDeltaa}

Let us fix a $c$-filter $\varOmega$ in $\er^T$. 

Recall that by $\partial \varOmega$ we denote the set $\bigcap_{U\in\varOmega}\overline{U}$. 
Recall also that  $\varOmega$ \emph{determines an ordering} $\succ$ in $\mathcal{N}^\er_{\varOmega}$ 
 (see Proposition \ref{realandalgebraicclos}), i.e.,  a total ordering satisfying: 
$$
f\succ g\ \Rightarrow\  f+h \succ  g+h\quad\hbox{and}\quad  
f\succ  0\,\land\, g\succ 0\ \Rightarrow\  fg \succ  0
$$
such that $f\succ 0$ iff $f>0$ on some $U\in \varOmega$. If $f\succ g$ then we also write $g\prec f$.

From \cite[Theorem 3.1, Remark 3.2 and Corollary 5.4]{Sppjm} we have

\begin{twr}\label{Corollary2} 
The following conditions are equivalent:

{\rm (a)} The field $(\mathcal{N}^\er_{\varOmega},\succ)$ is Archimedean.

{\rm (b)} There exists $x_\succ\in\partial\varOmega$ {\color{black} whose coordinates are pairwise different and the set of these coordinates 
 is algebraically independent over $\eq$}.

{\rm (c)} There exists $x_\succ\in\partial\varOmega$ such that $x_\succ \in U$ for any $U\in \varOmega$.

{\rm (d)} There exists $x_\succ\in\partial\varOmega $ such that $f\succ 0$ iff $f(x_\succ)>0$, provided $f\in \mathcal{N}^\er_{\varOmega}$.
\end{twr}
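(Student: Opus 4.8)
The plan is to establish the cycle of implications (c)$\Rightarrow$(d)$\Rightarrow$(a)$\Rightarrow$(b)$\Rightarrow$(c), using throughout the description of elements of $\mathcal{N}^\er_\varOmega$ as germs of $\eq$-Nash functions and the fact (Proposition~\ref{realandalgebraicclos}(a)) that $\mathcal{N}^\er_\varOmega$ is a real closure of $\eq(\La_T)$. Recall that $\partial\varOmega$ should be read as $\bigcap_{U\in\varOmega}\overline U$, and that by Proposition~\ref{Corollary1} this intersection has at most one point; so in each of (b), (c), (d) the point $x_\succ$ is automatically unique when it exists, and the content is only its existence plus the stated extra property.

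\textbf{(c)$\Rightarrow$(d).} Suppose $x_\succ\in\partial\varOmega$ lies in every $U\in\varOmega$. Given $f\in\mathcal{N}^\er_\varOmega$, represent it by a $\eq$-Nash function $f\colon U_0\to\er$ with $U_0\in\varOmega$; then $f$ is defined and continuous at $x_\succ$. If $f\succ 0$, then $f>0$ on some $U\in\varOmega$, and since $x_\succ\in U$ we get $f(x_\succ)\ge 0$; to upgrade to strict inequality, note that if $f(x_\succ)=0$ then, because $f$ is a nonzero $\eq$-Nash function, the $\eq$-algebraic hypersurface $\{f=0\}$ is a proper $\eq$-algebraic subset of $\K^T$ (here $f$ depends on finitely many coordinates, so this is a genuine finite-dimensional algebraic set), so by condition (ii) of the $c$-filter definition there is $U_1\in\varOmega$ disjoint from $\{f=0\}$; intersecting with the earlier $U$ via (iii) gives $f>0$ on a neighborhood basis element, yet $f(x_\succ)=0$ contradicts $x_\succ$ lying in that element by continuity. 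Hence $f(x_\succ)>0$. Conversely if $f(x_\succ)>0$, then by continuity $f>0$ on $U_0\cap\{|f-f(x_\succ)|<f(x_\succ)/2\}$, which contains an element of $\varOmega$ by (ii)--(iii) after removing the relevant proper algebraic set, so $f\succ 0$; and applying the trichotomy for the total order $\succ$ rules out $f\prec 0$ and $f=0$ since those force $f(x_\succ)\le 0$.

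\textbf{(d)$\Rightarrow$(a).} If the evaluation $f\mapsto f(x_\succ)$ represents the sign, then for any $f\in\mathcal{N}^\er_\varOmega$ with representative $f\colon U_0\to\er$, choose an integer $N\in\en$ with $N>|f(x_\succ)|$ (possible since $f(x_\succ)\in\er$); then $N-f$ and $N+f$ both have positive value at $x_\succ$, hence are $\succ 0$, so $-N\prec f\prec N$. Thus $\mathcal{N}^\er_\varOmega$ is Archimedean.

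\textbf{(a)$\Rightarrow$(b).} This is the main obstacle. Assume $(\mathcal{N}^\er_\varOmega,\succ)$ is Archimedean. An Archimedean ordered field embeds, as an ordered field, into $\er$; equivalently, each $\La_t$ has a well-defined real value $\tau_t\in\er$ given by the Dedekind cut $\{r\in\eq:r\prec\La_t\}$. Define $x_\succ\in\er^T$ by $x_\succ(t)=\tau_t$. One must show (i) the coordinates $\tau_t$ are algebraically independent over $\eq$, and (ii) $x_\succ\in\partial\varOmega=\bigcap_{U\in\varOmega}\overline U$. For (i): if some polynomial relation $P(\tau_{t_1},\dots,\tau_{t_m})=0$ held with $P\in\eq[\La]\setminus\{0\}$, then since the $\La_{t_i}$ are genuinely transcendental and independent in $\mathcal{N}^\er_\varOmega$ we have $P(\La_{t_1},\dots,\La_{t_m})\ne 0$ in $\mathcal{N}^\er_\varOmega$, hence it is either $\succ 0$ or $\prec 0$; using the order-preserving embedding into $\er$ this would force $P(\tau_{t_1},\dots,\tau_{t_m})\ne 0$, a contradiction. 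For (ii): fix $U\in\varOmega$; it has the form $U=\{x:(x(t_1),\dots,x(t_m))\in X\}$ for an open $\eq$-semialgebraic $X\subset\er^m$, and the statement "$(\La_{t_1},\dots,\La_{t_m})$ determines a point of $X$" can be tested by finitely many polynomial inequalities over $\eq$ which hold in $\mathcal{N}^\er_\varOmega$ (since $\mathcal{N}^\er_\varOmega$ is real closed, first-order formulas transfer, or more concretely the semialgebraic description of $X$ involves finitely many polynomials whose signs at $(\La_{t_i})$ are determined by $\succ$); passing to $\er$ via the order embedding, $(\tau_{t_1},\dots,\tau_{t_m})\in\overline X$, whence $x_\succ\in\overline U$. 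Intersecting over all $U$ gives $x_\succ\in\partial\varOmega$. (The reason one lands in $\overline X$ rather than $X$ is the usual loss of strict inequalities in the limit; this is exactly why the conclusion in (b)--(d) places $x_\succ$ in $\partial\varOmega$.) At this point I would invoke the cited \cite[Theorem 3.1, Remark 3.2 and Corollary 5.4]{Sppjm} to fill in the precise transfer arguments if a self-contained treatment is too long; the essential new ingredient is simply the standard fact that an Archimedean ordered field sits inside $\er$, combined with the $c$-filter axioms to locate the resulting real point in the closure of every member of $\varOmega$.

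I expect the delicate point to be keeping straight which inequalities survive passage between $\mathcal{N}^\er_\varOmega$ and $\er$: strict order relations among fixed elements transfer, but membership in an open set degrades to membership in its closure, and one must check that the algebraic independence of the $\tau_t$ is not accidentally destroyed — this is handled by the fact that a nonzero polynomial over $\eq$ evaluated at the transcendental generators is a nonzero, hence strictly signed, element of the real closure.
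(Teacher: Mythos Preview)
The paper does not prove this theorem in place; the line preceding it reads ``From \cite[Theorem 3.1, Remark 3.2 and Corollary 5.4]{Sppjm} we have'', so the entire argument is outsourced to the earlier reference. There is therefore no in-paper proof to compare your attempt against, and your proposal is effectively an attempt at a self-contained substitute.

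That attempt has a genuine gap: you announce the cycle (c)$\Rightarrow$(d)$\Rightarrow$(a)$\Rightarrow$(b)$\Rightarrow$(c) but never write the implication (b)$\Rightarrow$(c). This step is not a formality. Condition (b) gives a point $x_\succ\in\bigcap_{U\in\varOmega}\overline U$ with $\eq$-algebraically independent coordinates, and you must show $x_\succ\in U$ (not merely $\overline U$) for every $U\in\varOmega$. The missing idea is that the frontier $\overline U\setminus U$ of an open $\eq$-semialgebraic set lies inside a proper $\eq$-algebraic set (write $U$ as a finite union of basic sets $\bigcap_i\{P_{ij}>0\}$ and observe that the frontier is contained in $\bigcup_{i,j}\{P_{ij}=0\}$), so a point with algebraically independent coordinates cannot sit on it. Without this, (b) is a dead end in your cycle and the four conditions are not linked.

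Two smaller issues. In (c)$\Rightarrow$(d) you write ``since $x_\succ\in U$ we get $f(x_\succ)\ge 0$'' and then labor to upgrade to a strict inequality; but $x_\succ\in U$ and $f>0$ on $U$ give $f(x_\succ)>0$ immediately, so the detour (and the inaccurate claim that $\{f=0\}$ is itself $\eq$-algebraic --- it is only \emph{contained} in a proper $\eq$-algebraic set) is unnecessary. In (a)$\Rightarrow$(b) your remark about ``loss of strict inequalities in the limit'' is misplaced: an order embedding of fields preserves strict inequalities, so once the tuple $(\La_{t_1},\dots,\La_{t_m})$ satisfies the strict inequalities describing $X$ in $\mathcal N^\er_\varOmega$, its image $(\tau_{t_1},\dots,\tau_{t_m})$ lies in $X$ itself, not just $\overline X$. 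This would in fact give you (a)$\Rightarrow$(c) directly --- but (b)$\Rightarrow$(c) would still be required to complete the equivalence.
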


{\color{black}\begin{remark}\label{orderinarchimedean}
If $(\mathcal{N}^\er_{\varOmega},\succ)$ is an Archimedean field, where $\varOmega$ is a $c$-filter in $\er^T$, then one can assume that $T\subset \er$ and it is algebraically independent over $\eq$ and ordered in such a way that for $t_1,t_2\in T$ we have $\La_{t_1}\succ\La_{t_2}$ iff $t_1>t_2$. In fact, according to Theorem \ref{Corollary2}, it suffices to take  the set of coordinates of $x_{\succ}$ as the set $T$. 
\end{remark}}

Let $\K=\er$ or $\K=\cc$. 
Let $T\subset \er$ be an infinite set algebraically independent over~$\eq$. Let ${\bf x}_0\in \er^T$ be defined by ${\bf x}_0(t)=t$ for $t\in T$. Take the $c$-filter $\varOmega^\K_{{\bf x}_0}$ centered at ${\bf x}_0$ defined in Section \ref{cfilterssection}. 
The field $\mathcal{N}^\K_{\varOmega_{{\bf x}_0}^\K}$ will be denoted by $\mathcal{N}^\K_{{\bf x}_0}$. 
Then Theorem \ref{Corollary2} gives 

\begin{cor}\label{specialordering}
The field $\mathcal{N}^\er_{{\bf x}_0}$ is an Archimedean, real closed field which is the real closure of $\eq(\La_T)$. Moreover the function $\mathcal{N}^\er_{{\bf x}_0}\ni f\mapsto f({\bf x}_0)\in \er$ is an order preserving monomorphism.
\end{cor}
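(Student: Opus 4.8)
The plan is to derive Corollary \ref{specialordering} as a direct specialization of Theorem \ref{Corollary2} together with Proposition \ref{realandalgebraicclos}(a), applied to the specific $c$-filter $\varOmega^\er_{\bf x_0}$ centered at $\bf x_0$. First I would invoke Proposition \ref{realandalgebraicclos}(a): since $\varOmega^\er_{\bf x_0}$ is a $c$-filter of $\eq$-semialgebraic sets in $\er^T$, the field $\mathcal{N}^\er_{\bf x_0}=\mathcal{N}^\er_{\varOmega^\er_{\bf x_0}}$ is a real closure of $\eq(\La_t:t\in T)$, equipped with the ordering $\succ_{\varOmega^\er_{\bf x_0}}$. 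This immediately yields the ``real closed'' and ``real closure of $\eq(\La_t:t\in T)$'' parts of the assertion, so the only remaining points are the Archimedean property and the description of the order embedding into $\er$.

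Next I would check that the hypotheses of Theorem \ref{Corollary2} are met in the form needed. The $c$-filter $\varOmega^\er_{\bf x_0}$ is centered at $\bf x_0$, meaning ${\bf x_0}\in U$ for every $U\in\varOmega^\er_{\bf x_0}$, by construction (the basic sets \eqref{defarchordcenbtered} all contain $\bf x_0$ since $|t_j-t_j|=0<\varepsilon$). Hence $\bf x_0$ lies in $\bigcap_{U\in\varOmega^\er_{\bf x_0}}\overline U$; one checks it is a boundary point, i.e. ${\bf x_0}\in\partial\varOmega^\er_{\bf x_0}$, because no single basic set of the form \eqref{defarchordcenbtered} is contained in the filter's intersection of closures — more simply, $\bf x_0$ has all coordinates in $T$, which is algebraically independent over $\eq$ by hypothesis, so condition (b) of Theorem \ref{Corollary2} holds with $x_\succ={\bf x_0}$. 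By the equivalence of (a)--(d) in Theorem \ref{Corollary2} we conclude that $(\mathcal{N}^\er_{\bf x_0},\succ_{\varOmega^\er_{\bf x_0}})$ is Archimedean, and moreover condition (d) gives that $f\succ_{\varOmega^\er_{\bf x_0}}0$ iff $f({\bf x_0})>0$ for $f\in\mathcal{N}^\er_{\bf x_0}$.

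Finally I would assemble the statement about the evaluation map. Each class $[f]_{\varOmega^\er_{\bf x_0}}$ is represented by a Nash function $f$ defined on some $U\in\varOmega^\er_{\bf x_0}$, and since ${\bf x_0}\in U$ the value $f({\bf x_0})$ is well defined; it is independent of the representative because two representatives agree on a common $U_3\in\varOmega^\er_{\bf x_0}$ which again contains $\bf x_0$. The map $f\mapsto f({\bf x_0})$ is clearly a ring homomorphism into $\er$ by the definition of the operations on $\mathcal{N}^\er_{\bf x_0}$, it is injective since a field homomorphism is injective (the constant $1$ maps to $1\ne 0$), and by condition (d) of Theorem \ref{Corollary2} it carries $\succ_{\varOmega^\er_{\bf x_0}}$-positive elements to positive reals, hence is order preserving. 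This completes the proof.

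The content here is essentially bookkeeping: every nontrivial ingredient (real closedness, the Archimedean criterion, the order-evaluation description) is already packaged in Proposition \ref{realandalgebraicclos} and Theorem \ref{Corollary2}. The only point requiring a moment's care — and the one I would expect to be the main (minor) obstacle — is verifying cleanly that ${\bf x_0}\in\partial\varOmega^\er_{\bf x_0}$ rather than merely ${\bf x_0}\in\bigcap_{U}\overline U$, so that Theorem \ref{Corollary2}(b)--(d) apply verbatim; this follows because the center $\bf x_0$ is never an interior point of the filter (the sets of $\varOmega^\er_{\bf x_0}$ shrink to $\bf x_0$ in every finitely many coordinates while leaving the rest free), together with the algebraic independence of $T$ over $\eq$.
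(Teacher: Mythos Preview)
Your proposal is correct and follows essentially the same approach as the paper, which simply states that the corollary is given by Theorem \ref{Corollary2} (together, implicitly, with Proposition \ref{realandalgebraicclos}(a)). Your added details about the well-definedness and ring-homomorphism properties of the evaluation map are a reasonable expansion of what the paper leaves implicit.

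One minor point: your concern in the last paragraph about distinguishing ${\bf x_0}\in\partial\varOmega^\er_{\bf x_0}$ from ${\bf x_0}\in\bigcap_{U}\overline U$ is misplaced. In this paper (following \cite{Sppjm}), the notation $\partial\varOmega$ denotes precisely the set $\bigcap_{U\in\varOmega}\overline U$; compare Proposition \ref{Corollary1}. So once you observe that ${\bf x_0}\in U$ for every $U\in\varOmega^\er_{\bf x_0}$, condition (c) of Theorem \ref{Corollary2} is verified directly, and the remark about ${\bf x_0}$ failing to be an interior point is unnecessary.
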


It is easy to prove that $\varOmega_{{\bf x}_0}^\er=\{U\cap \er^T:U\in \varOmega_{{\bf x}_0}^\cc\}$. 
Since any analytic function $f:U\to\cc$, where $U\subset\er^T$ is an open set, has a unique holomorphic extension $\tilde f:\tilde U\to\cc$ onto some open set $\tilde U\subset \cc^T$ with $U\subset \tilde U$, by Proposition \ref{realandalgebraicclos} we immediately obtain

\begin{cor}\label{specialordering2}
The field $\mathcal{N}^\cc_{{\bf x}_0}$ is the algebraic closure of $\eq(\La_T)$ and of $\mathcal{N}^\er_{{\bf x}_0}$. Moreover, the mapping
$$
\Psi: \mathcal{N}^\cc_{{\bf x}_0}\ni f\mapsto \re f|_{\er^T}+i\im f|_{\er^T}\in \mathcal{N}^\er_{{\bf x}_0}(i)
$$
is an isomorphism of fields, where $i^2=-1$.
\end{cor}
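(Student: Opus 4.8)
The plan is to deduce Corollary~\ref{specialordering2} directly from Corollary~\ref{specialordering}, Proposition~\ref{realandalgebraicclos}(b), and the remark immediately preceding it that $\varOmega_{\bf x_0}^\er=\{U\cap\er^T:U\in\varOmega_{\bf x_0}^\cc\}$ together with the unique-holomorphic-extension observation. First I would note that Proposition~\ref{realandalgebraicclos}(b), applied to the $c$-filter $\varOmega_{\bf x_0}^\cc$ in $\cc^T$, already gives that $\mathcal N^\cc_{\bf x_0}$ is the algebraic closure of $\eq(\La_t:t\in T)$. Since $\mathcal N^\er_{\bf x_0}$ is by Corollary~\ref{specialordering} a real closure of that same field $\eq(\La_t:t\in T)$, and the algebraic closure of a real closed field $R$ is $R(i)$, it will follow that $\mathcal N^\cc_{\bf x_0}$ is also the algebraic closure of $\mathcal N^\er_{\bf x_0}$ once we know $\mathcal N^\er_{\bf x_0}$ embeds in $\mathcal N^\cc_{\bf x_0}$ compatibly — which is exactly what the map $\Psi$ will exhibit.

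Next I would construct the map $\Psi$ and check it is well defined. Given $[f]_{\varOmega_{\bf x_0}^\cc}$ with $f:U\to\cc$ holomorphic $\eq$-Nash on $U\in\varOmega_{\bf x_0}^\cc$, restrict to $U\cap\er^T\in\varOmega_{\bf x_0}^\er$; then $\re f|_{U\cap\er^T}$ and $\im f|_{U\cap\er^T}$ are real-analytic, and each satisfies a nonzero polynomial equation over $\eq$ (because $f$ does, and real and imaginary parts of an algebraic function are algebraic — one can take resultants of $P(\la,Z_1+iZ_2)$ and its conjugate, or simply invoke that the finitely many variables involved make this a classical finite-dimensional Nash statement), so they lie in $\mathcal N^\er_{\bf x_0}$. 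Well-definedness with respect to $\sim$ is immediate since two representatives agreeing on some $U_3\in\varOmega_{\bf x_0}^\cc$ have real/imaginary parts agreeing on $U_3\cap\er^T\in\varOmega_{\bf x_0}^\er$. That $\Psi$ is a ring homomorphism is a routine computation from $\re(f_1f_2)=\re f_1\,\re f_2-\im f_1\,\im f_2$ etc., and $\Psi$ fixes $\eq$ and sends each $\La_t$ to $\La_t$.

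For injectivity: if $\re f|_{\er^T}$ and $\im f|_{\er^T}$ both vanish on some $U'\cap\er^T$ with $U'\in\varOmega_{\bf x_0}^\cc$, then $f$ vanishes on the totally real set $U'\cap\er^T$, which is a uniqueness set for the holomorphic function $f$ on the connected open set $U'\subset\cc^T$ (this is where the "unique holomorphic extension" phenomenon cited before the corollary is used — equivalently, $U'\cap\er^T$ has nonempty interior in $\er^T$ and $f$ restricted to the relevant finitely many complex variables is holomorphic, hence identically zero), so $[f]=0$. For surjectivity onto $\mathcal N^\er_{\bf x_0}(i)$: given $g\in\mathcal N^\er_{\bf x_0}$, represented by a real-analytic $\eq$-Nash $g:V\to\er$ with $V\in\varOmega_{\bf x_0}^\er$, extend it to a holomorphic $\tilde g:\tilde V\to\cc$ on an open $\tilde V\subset\cc^T$; after shrinking we may assume $\tilde V\supset W$ for some $W\in\varOmega_{\bf x_0}^\cc$ (using $V=U_0\cap\er^T$ for some $U_0\in\varOmega_{\bf x_0}^\cc$ and property (iii) of a $c$-filter), and $\tilde g$ is still $\eq$-Nash since it satisfies the same polynomial equation as $g$ by analytic continuation. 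Then $\Psi([\tilde g])=g$, and since $i\in\mathcal N^\cc_{\bf x_0}$ (the constant function) maps to $i$, every element of $\mathcal N^\er_{\bf x_0}(i)$ is hit.

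The main obstacle I anticipate is not any single deep step but rather bookkeeping the two $c$-filters in tandem: one must consistently pass back and forth between $U\in\varOmega_{\bf x_0}^\cc$ and $U\cap\er^T\in\varOmega_{\bf x_0}^\er$, use property (iii) to find common refinements, and invoke the finite-dimensionality of each individual Nash function to reduce the "real and imaginary parts of an algebraic function are algebraic" and "zero set with real interior forces identically zero" claims to standard finite-dimensional facts about Nash functions and holomorphic functions of several variables. Once the map $\Psi$ is seen to be a well-defined field homomorphism, bijectivity follows cleanly, and the first sentence of the corollary then drops out: $\mathcal N^\cc_{\bf x_0}\cong\mathcal N^\er_{\bf x_0}(i)$ is the algebraic closure of the real closed field $\mathcal N^\er_{\bf x_0}$, and it is the algebraic closure of $\eq(\La_t:t\in T)$ by Proposition~\ref{realandalgebraicclos}(b).
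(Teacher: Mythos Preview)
Your approach is correct and essentially the same as the paper's: both use the identification $\varOmega_{\bf x_0}^\er=\{U\cap\er^T:U\in\varOmega_{\bf x_0}^\cc\}$ to see that $\Psi$ is well defined, invoke unique holomorphic extension from $\er^T$ to obtain bijectivity, and then appeal to Proposition~\ref{realandalgebraicclos} for the algebraic-closure statements. The only difference is packaging---the paper establishes bijectivity in one stroke by noting that every pair $u,v\in\mathcal N^\er_{\bf x_0}$ arises as the real and imaginary parts of a unique $f\in\mathcal N^\cc_{\bf x_0}$, whereas you separate injectivity (identity principle) from surjectivity (holomorphic extension of a single real Nash function) and are more explicit about why $\re f,\im f$ are themselves $\eq$-Nash.
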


\subsection{{\color{black}Another} $c$-filter on $\K^T$}\label{ordering in Rm}

Let $\K=\er$ or $\K=\cc$. 
Let $m$ be a fixed positive integer and $\La$ a system of $m$ variables $\La_1,\ldots,\La_m$. 

Take any {\color{black}nonzero} $P\in \eq[\La]$. Set
$$ 
\Gamma_P=\{(\la _1,\ldots,\la _m)\in \K^m :\,P(\la _1,\ldots,\la _{m-1},\la
_m+\gamma )=0\text{ for some }\gamma \in [0,\infty )\}.
$$
We define a polynomial $\omega (P)\in \eq[\La _1,\ldots,\La_{m-1}]\setminus\{0\}$ (or a number $\oop\in\eq\setminus\{0\}$ if $m=1$) by 
 $\oop =P_0$, 
 where
\begin{equation*}\label{oop}
 P=P_0\La ^d_m+ P_1\La ^{d-1}_m+\cdots+P_d,
\end{equation*} 
and $P_i\in \eq[\La _1,\ldots,\La _{m-1}]$ (or $P_i\in\eq$ if $m=1$) for $i=0,\ldots,d$, $P_0\ne 0$.

We now define sets $\op \subset \K^m $, $P\in\eq[\La]\setminus\{0\}$, by induction on~$m$:
\begin{equation*}\label{1.4} 
\op =\K \setminus \Gamma _P\subset \K  \quad \hbox{if}\quad m=1,
\end{equation*}
\begin{equation*}\label{1.5}
\op =(\K^m \setminus \Gamma _P)\cap (\opo \times \K )\subset \K^m\quad \hbox{if}\quad m>1 .  
\end{equation*} 

By the Tarski-Seidenberg Theorem (see \cite{Seidenberg,Tarski1}), the sets $\op$ for $P\in\eq[\La]\setminus\{0\}$ are $\eq$-semialgebraic. {\color{black} Indeed, this is clear for $\K=\er$. Let us explain it in the case when
 $\K = \cc$. Then $P(\la_1,\ldots,\la_m)= P(x_1+iy_1,\ldots,x_m+iy_m)=u(x_1,\ldots,x_m,y_1,\ldots,y_m)+iv(x_1,\ldots,x_m,y_1,\ldots,y_m)$, where $i^2=-1$ and $u,v\in \eq[x_1,\ldots,x_m,y_1,\ldots,y_m]$ are the real and imaginary parts of $P$ respectively. So, 
\begin{multline*}
\Gamma_P=\{(x_1,\ldots,x_m,y_1,\ldots,y_m)\in\er^{2m}:u(x_1,\ldots,x_m+\gamma,y_1,\ldots,y_m)\\
=v(x_1,\ldots,x_m+\gamma,y_1,\ldots,y_m)=0\text{ for some }\gamma \in [0,\infty )\}
\end{multline*}
is $\eq$-semialgebraic in $\er^{2m}$, and consequently $W_P$ is $\eq$-semialgebraic in $\cc^m=\er^{2m}$.}

An argument analogous to the proof of \cite[Theorem 1.1]{Spodzieja2} gives the following 

\begin{prop}\label{realNash1}  The family ${\mathcal W}=\{W_P: P\in \eq[\La],\;{\color{black}P\ne 0}\}$ is a $c$-filter and satisfies the following conditions: 

\noindent$ R_0.  \quad \op \subset \{\la \in \K^m :\,P(\la )\not= 0\}$,

\noindent$ R_1.  \quad \op \cap W_Q=W_{PQ}$,

\noindent$ R_2.  \quad 
\op \text{  is an unbounded subset of } \K^m $,
 
\noindent$ R_3.  \quad 
 \op \text{  is open, connected and simply connected}$,

\noindent$ R_4. \quad \text{for } \K=\cc,\; 
 \op \text{ is a dense subset of $\cc^m$}$. 

\noindent$ R_5.  \quad \op = \K^m \text{ for }P=\const $, $P\ne 0$.
\end{prop}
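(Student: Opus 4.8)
\textbf{Proof strategy for Proposition \ref{realNash1}.}

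The plan is to prove the properties $R_0$--$R_5$ by induction on $m$, and then to deduce that $\mathcal W$ is a $c$-filter from these properties together with Proposition \ref{simplyconnected}. First I would treat the base case $m=1$. Here $\op=\K\setminus\Gamma_P$, where $\Gamma_P$ is the set of $\la\in\K$ such that $P(\la+\gamma)=0$ for some $\gamma\in[0,\infty)$; since $P\ne0$ (the generic situation) has finitely many roots, $\Gamma_P$ is a finite union of ``half-lines'' $(-\infty,r]$ in the real case (so $\op$ is an unbounded open interval, visibly connected and simply connected), and in the complex case $\Gamma_P$ is a finite union of horizontal half-lines $\{r-\gamma:\gamma\ge0\}$, whose complement is open, connected, simply connected (it is $\cc$ minus finitely many disjoint rays going to $-\infty$) and dense. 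Properties $R_0$ and $R_5$ are immediate from the definition, and $R_1$ reduces to the observation $\Gamma_{PQ}=\Gamma_P\cup\Gamma_Q$, which follows because $(PQ)(\la+\gamma)=0$ iff $P(\la+\gamma)=0$ or $Q(\la+\gamma)=0$.

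Next I would carry out the inductive step, assuming $R_0$--$R_5$ hold in dimension $m-1$ for all polynomials in $\eq[\La_1,\ldots,\La_{m-1}]$, in particular for $\omega(P)$. The set $\op=(\K^m\setminus\Gamma_P)\cap(\opo\times\K)$ fibers over $\opo$: for a fixed $\la'=(\la_1,\ldots,\la_{m-1})\in\opo$ we have $\omega(P)(\la')\ne0$ by $R_0$ in dimension $m-1$, so the polynomial $\la_m\mapsto P(\la',\la_m)$ has degree exactly $d=\deg_{\La_m}P$, hence finitely many roots, and the fiber of $\op$ over $\la'$ is exactly $\K$ minus a set of the same shape as in the base case (finitely many rays to $-\infty$ in $\cc$, or one unbounded interval in $\er$). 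Property $R_1$ follows again from $\Gamma_{PQ}=\Gamma_P\cup\Gamma_Q$ together with $\omega(PQ)=\omega(P)\omega(Q)$ (read off from the leading-coefficient description of $\omega$) and the inductive $R_1$ for $\omega(P)\cap W_{\omega(Q)}$. Property $R_0$ is immediate from the fiberwise description. Unboundedness $R_2$ is clear since each nonempty fiber is unbounded. For $R_3$ (open, connected, simply connected) and $R_4$ (dense in $\cc^m$ when $\K=\cc$) the argument is that $\op$ is a ``semialgebraic fibre bundle'' over the open connected simply connected base $\opo$ with each fibre open, connected and simply connected, and the defining inequalities vary continuously; one concludes that $\op$ deformation-retracts onto $\opo$ (pushing $\la_m$ off the finitely many bad rays or out of the bad interval, continuously in $\la'$), so it inherits connectedness and simple connectedness, and density in the complex case follows because the fibres are dense in $\cc$ and the base is dense in $\cc^{m-1}$. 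Property $R_5$ is immediate. The $\eq$-semialgebraicity of $\op$ is already recorded in the excerpt via Tarski--Seidenberg.

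Finally I would check the three $c$-filter axioms for $\mathcal W=\{W_P:P\in\eq[\La]\}$. Axiom (i): for $P\ne0$, $\op$ is a nonempty (by $R_2$) open connected $\eq$-semialgebraic set by $R_3$; for $P=0$ one should note $W_0=\emptyset$ is excluded, so in fact $\mathcal W$ should be understood as ranging over $P\ne0$, or one argues directly that the family of the nonempty $W_P$'s suffices. Axiom (iii) is exactly $R_1$: given $W_P,W_Q$, take $W_{PQ}=W_P\cap W_Q\subset W_P\cap W_Q$. Axiom (ii) is the one real point: given a proper $\eq$-algebraic set $V\subsetneq\K^m$, write $V=\bigcup_i\{R_i=0\}$ with $R_i\in\eq[\La]$ not all identically zero; pick some $R_i\ne0$ and set $P=R_i$; then $R_0$ gives $\op\cap\{P=0\}=\emptyset$, hence $\op\cap\{R_i=0\}=\emptyset$; but $V$ may be a union, so instead take $P=\prod_i R_i'$ over those $i$ with $R_i\ne0$ --- a single nonzero polynomial whose zero set contains $V$ --- and $R_0$ then yields $\op\cap V=\emptyset$. (Here one uses that $\K^m$ is infinite over $\eq$ so that a product of nonzero polynomials is nonzero and has proper zero set, which is automatic.) I expect the main obstacle to be the rigorous verification of simple connectedness in $R_3$ in the inductive step: one must argue carefully that the ``bad'' locus $\Gamma_P\cup((\K^{m-1}\setminus\opo)\times\K)$ removed from $\K^m$ leaves a simply connected set, which is most cleanly done by exhibiting an explicit fibrewise deformation retraction onto $\opo$ that depends semialgebraically and continuously on the base point, using the continuity of the roots of $P(\la',\cdot)$ over the connected set $\opo$ where the leading coefficient $\omega(P)(\la')$ does not vanish.
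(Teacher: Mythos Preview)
Your inductive strategy is sound and is indeed the natural approach; the paper itself does not supply a proof here but only refers to \cite[Theorem 1.1]{Spodzieja2}, whose argument is precisely an induction on $m$ of the kind you outline. So in substance you are reconstructing the referenced proof.

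Two small corrections. First, your opening remark that the $c$-filter property will follow ``together with Proposition \ref{simplyconnected}'' is misplaced: that proposition assumes a $c$-filter, so it cannot be used to establish one; in fact you never invoke it, and the $c$-filter axioms follow directly from $R_0$--$R_3$ as you then show. Second, in checking axiom (ii) you write $V=\bigcup_i\{R_i=0\}$, but an $\LL$-algebraic set is defined by a finite \emph{system} of equations, so $V=\bigcap_i\{R_i=0\}$. This actually makes the argument simpler than you indicate: since $V\subsetneq\K^m$ there is some $R_i\ne0$, hence $V\subset\{R_i=0\}$, and $R_0$ gives $W_{R_i}\cap V=\emptyset$ immediately; the product construction is unnecessary.

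Your identification of $R_3$ in the inductive step as the crux is correct. The cleanest way to carry it out is to note that the translation flow $(\la',\la_m)\mapsto(\la',\la_m+t)$ for $t\ge0$ maps $W_P$ into itself (since the removed rays in each fibre point toward $-\infty$), and to combine this with a continuous choice of ``sufficiently large'' real part over the base, available because $\omega(P)$ does not vanish on $W_{\omega(P)}$ and hence the roots of $P(\la',\cdot)$ depend continuously on $\la'\in W_{\omega(P)}$. This yields a deformation retraction of $W_P$ onto a set homeomorphic to $W_{\omega(P)}$, from which connectedness and simple connectedness follow by induction.
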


We have (cf. \cite[Lemma 4.2]{Sppjm})

\begin{lem}\label{omegap} Let $1\le i_1<\cdots<i_m\le n$, and let $P\in \eq[\La_{i_1},\ldots,\La_{i_m}]\setminus\{0\}$. Let $Q\in\eq[\La_1,\ldots,\La_n]$ be a polynomial of the form 
\begin{equation*}\label{postac}
Q(x_1,\ldots,x_n)=P(x_{i_1},\ldots,x_{i_m}),\quad (x_1,\ldots,x_n)\in\K^n.
\end{equation*} 
Then $W_P\subset \K^m$, $W_Q\subset \K^n$, and
$$
W_Q=\{(x_1,\ldots,x_n)\in\K^n:(x_{i_1},\ldots,x_{i_m})\in W_P\}.
$$
\end{lem}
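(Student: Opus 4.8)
The claim is purely about the combinatorics of how the operators $W_P$ interact with the coordinate embedding $\K^n \hookrightarrow$ (fibres over $\K^m$), so I would prove it by unwinding the inductive definition \eqref{1.4}--\eqref{1.5} of $W_P$ against itself, once in $m$ variables and once in $n$ variables. The statements $W_P\subset\K^m$ and $W_Q\subset\K^n$ are immediate from the definitions (the sets $W_R$ live in the ambient space whose number of variables equals the number of variables of the polynomial $R$, and $Q\in\eq[\La_1,\dots,\La_n]$ by construction). So the whole content is the displayed inclusion. I would fix notation once and for all: write $\pi:\K^n\to\K^m$, $\pi(x_1,\dots,x_n)=(x_{i_1},\dots,x_{i_m})$, so the right-hand side of the displayed inclusion is exactly $\pi^{-1}(W_P)$, and the goal is $W_Q\subset\pi^{-1}(W_P)$.

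\textbf{Key steps.} First, I would handle the $\Gamma$-part. By \eqref{postac}, $Q(x_1,\dots,x_{n-1},x_n+\gamma)=P(x_{i_1},\dots,x_{i_m})$ whenever $i_m<n$, and $Q(x_1,\dots,x_{n-1},x_n+\gamma)=P(x_{i_1},\dots,x_{i_{m-1}},x_n+\gamma)$ when $i_m=n$; either way, $(x_1,\dots,x_n)\in\Gamma_Q$ forces $(x_{i_1},\dots,x_{i_m})\in\Gamma_P$ (in the first case because the $\gamma$-shift does not touch the variable $x_{i_m}$, so $P$ vanishing with shift is the same as $P$ vanishing, and if $P$ itself had a point it would already be in $\Gamma_P$ by taking $\gamma=0$; in the second case the shift transfers directly). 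Hence $\K^n\setminus\Gamma_Q\subset\pi^{-1}(\K^m\setminus\Gamma_P)$. Second, I would run an induction on $m$. For $m=1$ this first step already gives the result since $W_Q\subset\K^n\setminus\Gamma_Q$ and $W_P=\K\setminus\Gamma_P$. For $m>1$, I would pass to the ``truncation in the last variable'' operator $\omega$: by \eqref{oop} and \eqref{postac}, the leading coefficient $\omega(Q)\in\eq[\La_1,\dots,\La_{n-1}]$ is obtained from $\omega(P)\in\eq[\La_{i_1},\dots,\La_{i_{m-1}}]$ by the very same substitution rule \eqref{postac} applied with the index set $i_1<\cdots<i_{m-1}$ inside $\{1,\dots,n-1\}$ — one has to note here that if $i_m<n$ then $\La_n$ is not among the variables of $Q$ and $\omega$ acts ``trivially'', whereas if $i_m=n$ the leading-coefficient extraction in $\La_n$ on the $Q$ side matches that in $\La_m$ on the $P$ side. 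Applying the inductive hypothesis to $\omega(P)$ and $\omega(Q)$ gives $W_{\omega(Q)}\subset\pi'^{-1}(W_{\omega(P)})$ for the corresponding truncated projection $\pi':\K^{n-1}\to\K^{m-1}$. Third, I would assemble: by \eqref{1.5}, $W_Q=(\K^n\setminus\Gamma_Q)\cap(W_{\omega(Q)}\times\K)$; the first factor is contained in $\pi^{-1}(\K^m\setminus\Gamma_P)$ by step one, and the second factor maps into $\pi^{-1}(W_{\omega(P)}\times\K)$ by step two (after checking the bookkeeping that $\pi$ restricted to the first $n-1$ coordinates is $\pi'$, modulo whether the last coordinate is read off or not). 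Intersecting and using \eqref{1.5} on the $P$ side yields $W_Q\subset\pi^{-1}(W_P)$.

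\textbf{Main obstacle.} The only real subtlety — and the thing I would be careful to get right rather than wave at — is the dichotomy $i_m=n$ versus $i_m<n$ in the inductive step. When $i_m<n$ the variable $\La_n$ genuinely does not appear in $Q$, so $\omega$ ``passes through'' in a slightly degenerate way and the projection $\pi$ factors through $\K^{n-1}$; when $i_m=n$ the last variables on the two sides are matched and the $\Gamma$-shift and $\omega$-truncation correspond cleanly. In both cases the conclusion holds, but the two cases need to be written out separately (or at least explicitly flagged), since this is exactly where a careless argument would silently assume $i_m=n$. Everything else is a routine unfolding of the definitions, and indeed Lemma \ref{omegap} is quoted from \cite{Sppjm}, so a terse proof along these lines — or simply a pointer to \cite{Sppjm} — is all that is needed.
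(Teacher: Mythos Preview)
Your overall architecture (unfold \eqref{1.4}--\eqref{1.5} and induct, splitting on $i_m=n$ versus $i_m<n$) is the right one, and indeed the paper does not prove the lemma at all but simply points to \cite{Sppjm}. However, your ``first key step'' contains a genuine error that breaks the argument.

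You prove that $x\in\Gamma_Q$ forces $\pi(x)\in\Gamma_P$, i.e.\ $\Gamma_Q\subset\pi^{-1}(\Gamma_P)$, and from this you conclude $\K^n\setminus\Gamma_Q\subset\pi^{-1}(\K^m\setminus\Gamma_P)$. That implication runs the wrong way: taking complements in $\Gamma_Q\subset\pi^{-1}(\Gamma_P)$ yields $\pi^{-1}(\K^m\setminus\Gamma_P)\subset\K^n\setminus\Gamma_Q$, the reverse inclusion. Worse, the inclusion you actually need is \emph{false} when $i_m<n$. Take $\K=\er$, $n=2$, $m=1$, $i_1=1$, $P(\La_1)=\La_1$, so $Q(\La_1,\La_2)=\La_1$. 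Then $\Gamma_P=(-\infty,0]$, while $\Gamma_Q=\{x_1=0\}$ (the shift in $x_2$ is irrelevant); hence $\K^2\setminus\Gamma_Q=\{x_1\ne0\}$ is not contained in $\pi^{-1}(\K\setminus\Gamma_P)=\{x_1>0\}$. So in the case $i_m<n$ the $\Gamma$--comparison simply cannot carry the weight you put on it.

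The fix is to change both the inductive parameter and the role of the two factors in \eqref{1.5}. Induct on $n$ rather than on $m$. When $i_m=n$ your computation is essentially correct, because there one has the \emph{equality} $\Gamma_Q=\pi^{-1}(\Gamma_P)$ (the shift in $x_n$ matches the shift in $x_{i_m}$), and $\omega(Q)$ is obtained from $\omega(P)$ by the rule \eqref{postac} with indices $i_1<\cdots<i_{m-1}\le n-1$, so the inductive hypothesis applies and the two pieces of \eqref{1.5} intersect correctly. When $i_m<n$, however, $Q$ has degree $0$ in $\La_n$, so $\omega(Q)\in\eq[\La_1,\dots,\La_{n-1}]$ is again of the form \eqref{postac} but now built from $P$ itself (same $m$, with $i_m\le n-1$). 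The inductive hypothesis for $n-1$ gives $W_{\omega(Q)}\subset(\pi'')^{-1}(W_P)$ for the projection $\pi'':\K^{n-1}\to\K^m$, and then $W_Q\subset W_{\omega(Q)}\times\K\subset(\pi'')^{-1}(W_P)\times\K=\pi^{-1}(W_P)$, using only the second factor in \eqref{1.5}. The $\Gamma_Q$--factor is not needed at all in this branch.
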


Let $T$ be a nonempty linearly ordered set with ordering $\succ$. 
For any $ t_1,\ldots, t_m\in T$ with $ t_1 \prec\cdots\prec t_m$ we consider the projection map
$$
\pi_{ t_1,\ldots, t_m}:\K^T\ni x\mapsto (x( t_1),\ldots,x( t_m))\in \K^m.
$$
We define a family $\WW_{T}^\K$ of $\eq$-semialgebraic subsets $U$ of $\K^T$ by
\begin{equation*}\label{orderfamily1}
U=(\pi_{ t_1,\ldots, t_m})^{-1}(W_P)
\end{equation*}
for any $m\in\en\setminus\{0\}$, $ t_1,\ldots, t_m\in T$, $ t_1\prec\cdots\prec t_m$ and $P\in\eq[\La_{ t_1}\ldots,\La_{ t_m}]\setminus \{0\}$. 
From  Lemma \ref{omegap} and Proposition \ref{realNash1} (cf. \cite[Proposition 4.3]{Sppjm}) we obtain

\begin{prop}\label{realfield1} $\WW_{T}^\K$ is a $c$-filter. 
\end{prop}

{\color{black}  In Section \ref{cfilterssection} we observed that  there exists a $c$-filter $\varOmega^\K_{{\bf x}_0}$ in $\K^T$ 
provided $T\subset \er$. Proposition  \ref{realfield1} generalizes this to any set $T$.} 

From the definition of the sets $W_P$ in the real and complex cases and from \cite[Corollary 4.5]{Sppjm}, Proposition \ref{realNash1} and Lemma \ref{omegap} we have

\begin{cor}\label{correstriction}
$\WW^\er_T=\{U\cap \er^T:U\in \WW^\cc_{T}\}$.
\end{cor}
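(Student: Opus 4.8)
The plan is to prove Corollary \ref{correstriction} by showing that the real sets $W_P^{\er}\subset\er^m$ and the complex sets $W_P^{\cc}\subset\cc^m$ are related by $W_P^{\er}=W_P^{\cc}\cap\er^m$ for every $P\in\eq[\La]$, and then to lift this to the spaces $\K^T$ via the projection maps. First I would establish the finite-dimensional statement by induction on $m$. For $m=1$, the set $\Gamma_P$ was defined by the condition $P(\la_m+\gamma)=0$ for some $\gamma\in[0,\infty)$; since this quantifier runs over \emph{real} nonnegative $\gamma$ in both cases, a point $\la\in\er$ lies in $\Gamma_P$ (viewed in $\er$) if and only if it lies in $\Gamma_P$ (viewed in $\cc$), so $\Gamma_P\cap\er=\Gamma_P$ in the real sense, whence $W_P^{\er}=\er\setminus\Gamma_P=(\cc\setminus\Gamma_P)\cap\er=W_P^{\cc}\cap\er$ by \eqref{1.4}. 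For the inductive step, \eqref{1.5} gives $W_P^{\cc}=(\cc^m\setminus\Gamma_P)\cap(W_{\omega(P)}^{\cc}\times\cc)$; intersecting with $\er^m$ and using that $\Gamma_P$ is again cut out by a real-parameter condition, together with the inductive hypothesis $W_{\omega(P)}^{\cc}\cap\er^{m-1}=W_{\omega(P)}^{\er}$, yields $W_P^{\cc}\cap\er^m=(\er^m\setminus\Gamma_P)\cap(W_{\omega(P)}^{\er}\times\er)=W_P^{\er}$.

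Next I would transfer this to the infinite-dimensional setting. A generic element of $\WW_T^{\cc}$ has the form $U=(\pi_{t_1,\ldots,t_m})^{-1}(W_P^{\cc})$ for some $t_1\prec\cdots\prec t_m$ in $T$ and some nonzero $P\in\eq[\La_{t_1},\ldots,\La_{t_m}]$, where $\pi_{t_1,\ldots,t_m}\colon\K^T\to\K^m$ is the coordinate projection; crucially this same projection, restricted to $\er^T$, is $\pi_{t_1,\ldots,t_m}\colon\er^T\to\er^m$. Therefore
\begin{equation*}
U\cap\er^T=(\pi_{t_1,\ldots,t_m})^{-1}(W_P^{\cc})\cap\er^T=(\pi_{t_1,\ldots,t_m}|_{\er^T})^{-1}\bigl(W_P^{\cc}\cap\er^m\bigr)=(\pi_{t_1,\ldots,t_m}|_{\er^T})^{-1}(W_P^{\er}),
\end{equation*}
which is exactly the corresponding member of $\WW_T^{\er}$. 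This shows $\{U\cap\er^T:U\in\WW_T^{\cc}\}\subset\WW_T^{\er}$, and running the computation backwards from an arbitrary member of $\WW_T^{\er}$ (which is built from the same data $(t_1,\ldots,t_m,P)$) gives the reverse inclusion, establishing the claimed equality.

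I do not anticipate a serious obstacle; the only point requiring care is the base-case observation that the auxiliary sets $\Gamma_P$ (and hence the $W_P$) are defined by quantifying $\gamma$ over the \emph{real} ray $[0,\infty)$ even in the complex case, so that passing to the real slice commutes with the constructions in \eqref{1.4} and \eqref{1.5}. This is precisely what Proposition \ref{realNash1} and Lemma \ref{omegap} encode about the shape of these sets, so the corollary follows, as the statement indicates, directly from the definitions together with those two results.
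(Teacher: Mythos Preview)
Your proposal is correct and follows essentially the same route the paper indicates: the corollary is stated without proof, as immediate ``from the definition of the sets $W_P$ in the real and complex cases and from Proposition \ref{realNash1} and Lemma \ref{omegap}'', and your argument is precisely the unpacking of that remark. The key observation---that $\Gamma_P$ is defined by quantifying over real $\gamma\in[0,\infty)$ in both the real and complex settings, so $\Gamma_P^{\cc}\cap\er^m=\Gamma_P^{\er}$ and hence $W_P^{\cc}\cap\er^m=W_P^{\er}$ by induction on $m$ via \eqref{1.4} and \eqref{1.5}---is exactly what ``from the definition'' means here, and the passage to $\K^T$ via the projections $\pi_{t_1,\ldots,t_m}$ is straightforward. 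One minor comment: your closing appeal to Proposition \ref{realNash1} and Lemma \ref{omegap} is largely cosmetic, since your induction uses only the recursive definitions \eqref{1.4}--\eqref{1.5} and not the structural properties $R_0$--$R_5$ or the compatibility statement of the lemma; the argument stands without them.
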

{\color{black}
\begin{remark}\label{lambda1lambda2}
It is easy to see that for $\K=\er$ and $P=\La_2-\La_1\in\eq[\La_1,\La_2]$ we have $W_P=\{(\la_1,\la_2)\in\er^2:\la_2 > \la_1\}$, so $\La_2\succ_{\WW} \La_1$ for the ordering $\succ_{\WW}$ in $\mathcal{N}^\er_{\WW}$ determined by the $c$-filter $\WW$ in $\er^2$ (see Proposition \ref{realNash1}). So, for any  linearly ordered set $T$ with ordering $\succ$ 
 we have $t_1\succ t_2$ iff $\La_{t_1}\succ_{\WW_T^\er} \La_{t_2}$.
\end{remark}
}

{\color{black}
\begin{remark}\label{rempartialWTC} 
  By Proposition
  \ref{realNash1} 
 for any $U\in \WW^\cc_T$ we have $\overline{U}=\cc^T$, so $\partial \WW^\cc_T=\bigcap_{U\in \WW^\cc_T}\overline{U} =\cc^T$. On the other hand,   $\partial \WW^\er_T=\emptyset$. Indeed, take any $t\in T$. Then $U_n=\{x\in \er^T:x(t)>n\}\in \WW^\er_T$ for all $n\in \en$, and so $\partial \WW^\er_T\subset \bigcap_{n\in\en}\overline{U_n}=\emptyset$. 
\end{remark}}

We will denote by $\mathcal{N}^\K_T$ the field of Nash functions $\mathcal N^\K_{\varOmega}$, where $\varOmega=\WW^\K_{T}$ is the $c$-filter defined above. A similar argument to that for Corollary \ref{specialordering2} gives 

\begin{prop}\label{nashextensionN}
The  mapping
$$
\Psi: \mathcal{N}^\cc_T\ni f\mapsto \re f|_{\er^T}+i\im f|_{\er^T}\in \mathcal{N}^\er_T(i)
$$
is an isomorphism of fields, where $f|_{\er^T}$ is the restriction $f|_{U\cap \er^T}:U\cap \er^T\to\cc$, provided $f\in\mathcal{N}^\cc_{T}(U)$, $U\in \WW^\cc_{T}$ and $i^2=-1$. Consequently,  $\mathcal{N}^\cc_T$ is an algebraic extension of $\mathcal{N}^\er_T$ of degree $2$. Moreover, the field $\mathcal{N}^\cc_T$ is the algebraic closure of $\mathcal{N}^\er_T$.
\end{prop}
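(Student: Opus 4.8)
The plan is to follow the proof of Corollary~\ref{specialordering2} almost verbatim, with the $c$-filter $\WW^\K_T$ (a $c$-filter by Proposition~\ref{realfield1}) playing the role of $\varOmega^\K_{\bf x_0}$ and Corollary~\ref{correstriction} playing the role of the restriction identity used there. Concretely, I would first check that $\Psi$ is a well-defined homomorphism of fields, then that it is bijective, and finally read off the remaining assertions from Proposition~\ref{realandalgebraicclos}.

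For well-definedness: if $f\in\mathcal{N}^\cc_T(U)$ with $U\in\WW^\cc_T$, then $f$ is holomorphic on $U$ and $P(\lambda,f(\lambda))=0$ on $U$ for some nonzero $P\in\eq[\La_T,Z]$. As $P$ has real coefficients, on $U\cap\er^T$ the conjugate $\overline{f|_{\er^T}}$ satisfies the same equation, so both $f|_{\er^T}$ and $\overline{f|_{\er^T}}$ are algebraic over $\eq(\La_T)$; hence so are the real-valued real-analytic functions $\re f|_{\er^T}=\tfrac12(f|_{\er^T}+\overline{f|_{\er^T}})$ and $\im f|_{\er^T}=\tfrac1{2i}(f|_{\er^T}-\overline{f|_{\er^T}})$, which are therefore $\eq$-Nash on $U\cap\er^T$. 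By Corollary~\ref{correstriction} we have $U\cap\er^T\in\WW^\er_T$, so $\Psi(f)\in\mathcal{N}^\er_T(i)$; and if $f_1=f_2$ on some $U_3\in\WW^\cc_T$, then their real and imaginary parts agree on $U_3\cap\er^T\in\WW^\er_T$, so $\Psi$ does not depend on the chosen representative.

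That $\Psi$ is a homomorphism of fields is routine: additivity is immediate, multiplicativity is the identity $\re(fg)+i\,\im(fg)=(\re f+i\,\im f)(\re g+i\,\im g)$ read in $\mathcal{N}^\er_T(i)$, and $\Psi(1)=1$; being a nonzero ring homomorphism out of a field, $\Psi$ is injective. For surjectivity I would argue as in Corollary~\ref{specialordering2}: given $u+iv\in\mathcal{N}^\er_T(i)$ with $u,v$ $\eq$-Nash on a real slice, the holomorphic-extension remark preceding Corollary~\ref{specialordering2} produces $\cc$-$\eq$-Nash extensions $\tilde u,\tilde v$ of $u,v$, and $f=\tilde u+i\tilde v$ is $\cc$-$\eq$-Nash on a common domain containing some $W'\in\WW^\cc_T$; then $\Psi([f])=[u]+i[v]$. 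Hence $\Psi$ is an isomorphism of $\mathcal{N}^\cc_T$ onto $\mathcal{N}^\er_T(i)$.

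It remains to read off the consequences. Using $\Psi^{-1}$ to identify $\mathcal{N}^\er_T$ with a subfield of $\mathcal{N}^\cc_T$ (each real $\eq$-Nash function going to its holomorphic extension), one has $\Psi^{-1}(i)=[i]$, the class of the constant function $i$, so $\mathcal{N}^\cc_T=\mathcal{N}^\er_T(i)$ with $i^2=-1$. By Proposition~\ref{realandalgebraicclos}(a) the field $\mathcal{N}^\er_T$ is a real closure of $\eq(\La_T)$, hence formally real, so $-1$ is not a square in it and $[\mathcal{N}^\cc_T:\mathcal{N}^\er_T]=2$. By Proposition~\ref{realandalgebraicclos}(b) the field $\mathcal{N}^\cc_T$ is the algebraic closure of $\eq(\La_T)$, hence algebraically closed; being also algebraic over $\mathcal{N}^\er_T$, it is therefore the algebraic closure of $\mathcal{N}^\er_T$ (equivalently, this is the Artin--Schreier picture of the real closed field $\mathcal{N}^\er_T$). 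The only point that is not purely formal is the compatibility, under restriction to $\er^T$ and under holomorphic extension, of the filters $\WW^\cc_T$ and $\WW^\er_T$; this is exactly what Corollary~\ref{correstriction} supplies, so I expect the write-up to be short, with the real work being bookkeeping with real and imaginary parts.
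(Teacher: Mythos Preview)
Your proposal is correct and follows essentially the same approach as the paper: the paper does not give a separate proof of this proposition but simply says that ``a similar argument to that for Corollary~\ref{specialordering2} gives'' it, and you have carried out exactly that argument, replacing the centered $c$-filter $\varOmega^\K_{\bf x_0}$ by $\WW^\K_T$ and invoking Corollary~\ref{correstriction} in place of the identity $\varOmega_{\bf x_0}^\er=\{U\cap\er^T:U\in\varOmega_{\bf x_0}^\cc\}$. Your write-up is in fact more detailed than the paper's own sketch (which handles well-definedness and surjectivity each in one sentence); the only point you treat somewhat informally---that the holomorphic extension of a real $\eq$-Nash function can be taken on a set of $\WW^\cc_T$---is glossed over in the paper as well, and is secured by property $R_3$ (simple connectedness) of Proposition~\ref{realNash1} together with the $c$-filter condition~(ii).
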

 
 \begin{remark}
 By the definition of the $c$-filter $\WW_T^\cc$,  any function $f\in \mathcal{N}_T^\cc$ is holomorphic in an open connected, simply connected and dense subset of $\cc^T$.
 \end{remark}

\subsection{Extensions of $c$-filters}\label{sectio26prime}

Let $\K=\er$ or $\K=\cc$. 
Let $(T_1, T_2)$ be a pair of nonempty disjoint linearly ordered sets with orderings $\succ_1$ and $\succ_2$, respectively. Then $T_1\cup T_2$ is linearly ordered by: for any $t,t'\in T_1\cup T_2$, $t\succ_{1,2} t'$ iff either $t\in T_2$ and $t'\in T_1$, or $t,t'\in T_1$ and $t\succ_1 t'$, or $t,t'\in T_2$ and $t\succ_2 t'$.
 Then one can consider the space $\K^{T_1}\times\K^{T_2}$ as $\K^{T_1\cup T_2}$.  
 
Under the above convention, the construction of the $c$-filter $\WW^\K_T$ gives 

\begin{prop}\label{extensioncfilter}
The $c$-filter $\WW^\K_{T_1\cup T_2}$ of subsets of $\K^{T_1}\times \K^{T_2}$ contains the families $\{U\times \K^{T_2}:U\in \WW^\K_{T_1}\}$ and $\{\K^{T_1}\times U:U\in\WW^\K_{T_2}\}$. 
\end{prop}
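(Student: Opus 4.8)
The statement to prove is Proposition \ref{extensioncfilter}: the $c$-filter $\WW^\K_{T_1\cup T_2}$ of subsets of $\K^{T_1}\times\K^{T_2}\cong\K^{T_1\cup T_2}$ contains the families $\{U\times\K^{T_2}:U\in\WW^\K_{T_1}\}$ and $\{\K^{T_1}\times U:U\in\WW^\K_{T_2}\}$. By symmetry it suffices to verify the first inclusion, so I would fix $U\in\WW^\K_{T_1}$ and exhibit a polynomial defining $U\times\K^{T_2}$ in the form prescribed by \eqref{orderfamily1}. By definition there are $m\ge 1$, elements $t_1\prec_1\cdots\prec_1 t_m$ of $T_1$, and a nonzero $P\in\LL[\La_{t_1},\ldots,\La_{t_m}]$ with $U=(\pi_{t_1,\ldots,t_m})^{-1}(W_P)$, where the projection and $W_P$ are taken inside $\K^{T_1}$.

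\textbf{Key steps.} First I would observe that, under the ordering $\prec_{1,2}$ on $T_1\cup T_2$ described just before the proposition, every element of $T_1$ precedes every element of $T_2$, so the chain $t_1\prec_{1,2}\cdots\prec_{1,2}t_m$ is still strictly increasing in $T_1\cup T_2$, and the projection $\pi_{t_1,\ldots,t_m}:\K^{T_1\cup T_2}\to\K^m$ agrees with the composition of the coordinate projection $\K^{T_1}\times\K^{T_2}\to\K^{T_1}$ followed by the old $\pi_{t_1,\ldots,t_m}:\K^{T_1}\to\K^m$. Hence
$$
(\pi_{t_1,\ldots,t_m})^{-1}(W_P)=U\times\K^{T_2}
$$
as subsets of $\K^{T_1}\times\K^{T_2}$, where on the left the preimage is now taken in $\K^{T_1\cup T_2}$. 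Since $P$ is the same nonzero polynomial in the variables $\La_{t_1},\ldots,\La_{t_m}$, which are still among the variables $\La_t$, $t\in T_1\cup T_2$, this exhibits $U\times\K^{T_2}$ as a member of the family $\WW^\K_{T_1\cup T_2}$ defined by \eqref{orderfamily1}. The inclusion $\{U\times\K^{T_2}:U\in\WW^\K_{T_1}\}\subset\WW^\K_{T_1\cup T_2}$ follows, and the argument for $\{\K^{T_1}\times U:U\in\WW^\K_{T_2}\}$ is identical up to swapping the roles of $T_1$ and $T_2$ (here one uses that a chain in $T_2$ is also a chain in $T_1\cup T_2$, and that the extra $T_1$-coordinates are simply unconstrained).

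\textbf{Main obstacle.} There is essentially no analytic content; the only thing requiring care is the bookkeeping of how the three orderings $\prec_1,\prec_2,\prec_{1,2}$ interact with the construction \eqref{orderfamily1} of $\WW^\K_{\bullet}$, which was defined relative to a linear ordering on the index set. One must check that a $\prec_1$-increasing tuple remains $\prec_{1,2}$-increasing (immediate from the definition of $\prec_{1,2}$) and, for the second family, that a $\prec_2$-increasing tuple remains $\prec_{1,2}$-increasing; both are trivial once the definition of $\prec_{1,2}$ is unwound. Thus the proposition is just the remark that the defining family \eqref{orderfamily1} for $T_1\cup T_2$ already contains the ``cylinders'' over the defining sets for $T_1$ and for $T_2$, and I expect the proof to be a short paragraph.
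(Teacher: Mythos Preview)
Your proposal is correct and is exactly the argument the paper has in mind: the paper does not give a formal proof at all, but simply states that the proposition follows from ``the construction of the $c$-filter $\WW^\K_T$'', i.e., from the defining formula \eqref{orderfamily1}. Your write-up just spells out this observation in detail, checking that a $\prec_1$- (resp.\ $\prec_2$-) increasing tuple remains $\prec_{1,2}$-increasing and that the corresponding preimage in $\K^{T_1\cup T_2}$ is the cylinder $U\times\K^{T_2}$ (resp.\ $\K^{T_1}\times U$); this is precisely what the paper leaves implicit.
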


On account   of       the above proposition, the $c$-filter $\WW^\K_{T_1\cup T_2}$ will be called an \emph{extension} of $\WW^\K_{T_1}$ and of $\WW^\K_{T_2}$. 

It is easy to observe that the assertion  of Proposition \ref{extensioncfilter} also holds for $c$-filters centered at points {\color{black}$\varOmega^\K_{{\bf x_1}}$, $\varOmega^\K_{{\bf x}_2}$, respectively in $\K^{T_1}$, $\K^{T_2}$, provided $T_1,T_2\subset \er$ are disjoint  and their union $T_1\cup T_2$ is algebraically independent over $\eq$. In the case of arbitrary $c$-filters, a similar construction
cannot be made,  because it leads to many filters in the Cartesian product of appropriate spaces. 
For instance $\varOmega=\{(0,\ve):\ve\in\eq_+\}$ is a $c$-filter in $\er$ but there are  infinitely many $c$-filters in $\er^2$ containing  $\{U\times \er:U\in \varOmega\}$ and $\{\er\times U:U\in\varOmega\}$.}
 
Let $(T_1, T_2)$ be a pair of nonempty disjoint linearly ordered sets.

\begin{prop}\label{nashextension}
The field $\mathcal N^\K_{T_1\cup T_2}$ is an extension of $\mathcal N^\K_{T_1}$ and  $\mathcal N^\K_{T_2}$.
\end{prop}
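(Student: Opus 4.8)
The plan is to deduce Proposition~\ref{nashextension} directly from the inclusion of $c$-filters established in Proposition~\ref{extensioncfilter}, together with the description of the field $\mathcal N^\K_\varOmega$ as a direct limit of Nash function rings. First I would recall the concrete meaning of ``extension'' here: since $\mathcal N^\K_{T_1}$ is obtained from the $c$-filter $\WW^\K_{T_1}$ on $\K^{T_1}$ and $\mathcal N^\K_{T_1\cup T_2}$ from $\WW^\K_{T_1\cup T_2}$ on $\K^{T_1}\times\K^{T_2}\cong\K^{T_1\cup T_2}$, the natural candidate for the embedding is the map sending the class $[f]_{\WW^\K_{T_1}}$ of a Nash function $f\colon U\to\K$, with $U\in\WW^\K_{T_1}$, to the class $[f\circ p_1]_{\WW^\K_{T_1\cup T_2}}$, where $p_1\colon\K^{T_1}\times\K^{T_2}\to\K^{T_1}$ is the first projection. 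The key point is that $U\times\K^{T_2}\in\WW^\K_{T_1\cup T_2}$ by Proposition~\ref{extensioncfilter}, and that $f\circ p_1$ is a $\K$-$\eq$-Nash function on $U\times\K^{T_2}$: it is (holo)morphic because $p_1$ is linear and $f$ is, and it satisfies the same defining polynomial $P\in\eq[\La_{T_1},Z]$ as $f$, viewed inside $\eq[\La_{T_1\cup T_2},Z]$ (here I use that $f$, hence $P$, involves only finitely many of the variables $\La_t$, $t\in T_1$).

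Next I would check that this map is well defined and a ring homomorphism. Well-definedness: if $f_1|_{U_3}=f_2|_{U_3}$ for some $U_3\in\WW^\K_{T_1}$, then $(f_1\circ p_1)$ and $(f_2\circ p_1)$ agree on $U_3\times\K^{T_2}\in\WW^\K_{T_1\cup T_2}$, so their classes coincide; this also uses condition~(iii) of the $c$-filter definition to intersect domains. Compatibility with addition and multiplication is immediate from the formula for operations on $\mathcal N^\K_\varOmega$, since restriction and precomposition with $p_1$ commute and one can choose a common refinement $U\times\K^{T_2}$ with $U\subset U_1\cap U_2$. It sends $1$ to $1$, so it is a homomorphism of fields, hence automatically injective; alternatively, injectivity follows because if $f\circ p_1$ vanishes on some $W\in\WW^\K_{T_1\cup T_2}$, then, writing $W\supset (\pi_{t_1,\dots,t_m})^{-1}(W_P)$ and noting a fiber of $p_1$ over a generic point of $U$ meets $W$, one concludes $f\equiv 0$ on a nonempty open subset of $U$, hence $f=0$ in $\mathcal N^\K_{T_1}$ by Proposition~\ref{Nash1}. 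Symmetrically one gets an embedding of $\mathcal N^\K_{T_2}$ via the second projection $p_2$, using the second family in Proposition~\ref{extensioncfilter}.

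Finally I would record that these embeddings identify $\eq(\La_{T_1})$ and $\eq(\La_{T_2})$ with their images inside $\eq(\La_{T_1\cup T_2})\subset\mathcal N^\K_{T_1\cup T_2}$ compatibly with Proposition~\ref{realandalgebraicclos}, so that $\mathcal N^\K_{T_1\cup T_2}$ is indeed an extension field of each $\mathcal N^\K_{T_i}$ in the intended sense (and in the real case the ordering $\succ_{\WW^\K_{T_1}}$ is the restriction of $\succ_{\WW^\K_{T_1\cup T_2}}$, since $f>0$ on $U$ implies $f\circ p_1>0$ on $U\times\K^{T_2}$, and conversely by the fiber argument above). I expect the only genuine obstacle to be the verification that $f\circ p_1$ is again a Nash function on a set belonging to the larger $c$-filter, and that is exactly the content of Proposition~\ref{extensioncfilter} together with the finite-dependence of Nash functions on the variables; everything else is a routine transport of structure along a direct limit. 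Thus the proof is essentially a two-line application of Proposition~\ref{extensioncfilter}, and I would keep it short.
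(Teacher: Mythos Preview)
Your proposal is correct and follows essentially the same approach as the paper: the paper's proof simply says that a representative $f\colon U\to\K$ with $U\in\WW^\K_{T_1}$ ``may be considered as a function $f\colon U\times\K^{T_2}\to\K$'', which is exactly your $f\circ p_1$, and that addition and multiplication extend. You have spelled out more carefully the well-definedness, injectivity, and ordering-compatibility that the paper leaves implicit, but the core idea and the reliance on Proposition~\ref{extensioncfilter} are the same.
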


\begin{proof} Indeed, any function $f\in\mathcal{N}^\K_{T_1}$ has a representative $f:U\to \K$, where $U\in \WW^\K_{T_1}$, which we may consider as a function $f:U\times \K^{T_2}\to\K$. So, $f\in \mathcal N^\K_{T_1\cup T_2}$. Obviously addition and multiplication extend from $\mathcal{N}^\K_{T_1}$ to $\mathcal N^\K_{T_1\cup T_2}$. Analogously we consider the case of $f\in\mathcal{N}^\K_{T_2}$.
\end{proof}

\section{A geometric model of an arbitrary differentially closed field}\label{constralgebraically}

\subsection{Derivations on a field of Nash functions}

Let $\K=\er$ or $\K=\cc$. Consider a $c$-filter $\varOmega$ 
in $\K^T$ and the $\K$-field $\mathcal N^\K_{\varOmega}$ of Nash functions.  Take any family 
$$
g=(g_t\in \mathcal N^\K_{\varOmega}: t\in T),
$$
and let $\delta_g :\mathcal N^\K_{\varOmega}\to \mathcal N^\K_{\varOmega}$ be the mapping defined by
\begin{equation}\label{eqdeltag}
\delta_g(f)=\sum_{t\in T}g_t\frac{\partial f}{\partial \La_t}\quad\hbox{for }f\in \mathcal N^\K_{\varOmega}.
\end{equation}
The mapping $\delta_g$ is well defined, because any representative of $f\in \mathcal N^\K_{\varOmega}$ depends only on a finite number of variables, so $\frac{\partial f}{\partial \La_t}\in \mathcal{N}^\K_{\varOmega}$ and the sum in \eqref{eqdeltag} is finite. We have

\begin{prop}\label{derywacjenaNash1}
The mapping $\delta_g$ defined by \eqref{eqdeltag} is a derivation (more precisely, a $\eq$-derivation) on $\mathcal N^\K_{\varOmega}$.
Moreover, any derivation on $\mathcal N^\K_{\varOmega}$ is of the form \eqref{eqdeltag}.
\end{prop}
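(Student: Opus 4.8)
The plan is to verify the two assertions of Proposition \ref{derywacjenaNash1} separately: first that $\delta_g$ defined by \eqref{eqdeltag} is indeed a $\eq$-derivation on $\mathcal N^\K_{\varOmega}$, and then that every $\eq$-derivation on $\mathcal N^\K_{\varOmega}$ arises in this way.

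For the first part, note that $\delta_g$ is well defined by the remark already made: any representative $f : U \to \K$ (with $U \in \varOmega$) depends on only finitely many variables $\La_{t_1},\dots,\La_{t_k}$, hence $\partial f/\partial \La_t = 0$ for all but finitely many $t$, each $\partial f/\partial \La_t$ is again a $\eq$-Nash function on (a shrinking of) $U$ — because partial derivatives of Nash functions are Nash, being analytic and algebraic over $\eq(\La_\Deltaa)$ — and the finite sum $\sum_t g_t\,\partial f/\partial \La_t$ lies in $\mathcal N^\K_{\varOmega}$ after restricting all summands to a common $U' \in \varOmega$ (using condition (iii) of the $c$-filter). One must also check independence of the chosen representative: if $f_1|_{U_3} = f_2|_{U_3}$ then $\partial f_1/\partial\La_t$ and $\partial f_2/\partial\La_t$ agree on $U_3$ (the $U_i$ being open), so $\delta_g(f_1)$ and $\delta_g(f_2)$ are $\sim$-equivalent. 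Then $\eq$-linearity is immediate from linearity of $\partial/\partial\La_t$, and the Leibniz rule $\delta_g(fh) = \delta_g(f)h + f\delta_g(h)$ follows termwise from the ordinary product rule $\partial(fh)/\partial\La_t = (\partial f/\partial\La_t)h + f(\partial h/\partial\La_t)$, all identities holding on a common shrinking in $\varOmega$.

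For the second part, let $D$ be an arbitrary $\eq$-derivation on $\mathcal N^\K_{\varOmega}$, and set $g_t := D(\La_t) \in \mathcal N^\K_{\varOmega}$ for $t\in\Deltaa$ (the coordinate function $\La_t$ represents an element of $\mathcal N^\K_{\varOmega}$). I claim $D = \delta_g$. Since $D$ and $\delta_g$ are both $\eq$-derivations agreeing on every $\La_t$, they agree on the subring $\eq[\La_\Deltaa]$ generated by the $\La_t$ over $\eq$ (by $\eq$-linearity and the Leibniz rule), hence on the subfield $\eq(\La_\Deltaa)$ by the quotient rule, hence on its algebraic closure inside $\mathcal N^\K_{\varOmega}$: any $f\in\mathcal N^\K_{\varOmega}$ satisfies a minimal polynomial relation $P(\La_\Deltaa, f) = 0$ with $P\in\eq[\La_\Deltaa, Z]$, and differentiating this relation with $D$ (resp. with $\delta_g$) gives $\sum_t (\partial P/\partial\La_t)(\La_\Deltaa,f)\,g_t + (\partial P/\partial Z)(\La_\Deltaa,f)\,D(f) = 0$, the same linear equation that determines $\delta_g(f)$; since $(\partial P/\partial Z)(\La_\Deltaa,f)\ne 0$ (separability, characteristic zero and $P$ minimal), we get $D(f) = \delta_g(f)$. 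By Proposition \ref{realandalgebraicclos}(b) (in the complex case) the field $\mathcal N^\K_{\varOmega}$ is exactly the algebraic closure of $\eq(\La_\Deltaa)$, so this covers all of $\mathcal N^\K_{\varOmega}$; in the real case one uses instead that $\mathcal N^\er_{\varOmega}$ is a real closure of $\eq(\La_\Deltaa)$ (Proposition \ref{realandalgebraicclos}(a)), which is likewise algebraic over $\eq(\La_\Deltaa)$, so the same argument applies.

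The only genuinely delicate point — which I would state carefully but not belabor — is the bookkeeping of domains of definition: every identity above is an identity of germs, valid after passing to a common member of the $c$-filter $\varOmega$, and one must confirm that a partial derivative of a $\eq$-Nash function on $U$ is again a $\eq$-Nash function on $U$ (it is, since differentiating $P(\la,f(\la)) = 0$ yields $\frac{\partial P}{\partial\La_t}(\la,f(\la)) + \frac{\partial P}{\partial Z}(\la,f(\la))\frac{\partial f}{\partial\La_t}(\la) = 0$, exhibiting $\partial f/\partial\La_t$ as algebraic over $\eq(\La_\Deltaa)$, and it is analytic, hence Nash). There is no real obstacle here — the statement is, as the paper says, "easy to observe" — but the value of writing it out is to record that the correspondence $g \leftrightarrow \delta_g$ between families and derivations is a bijection, which will be used when constructing the specific derivation $\delta$ on $\mathcal N_\varOmega$ in the proof of Theorem \ref{maindifferclosed1}.
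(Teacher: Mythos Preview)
Your proof is correct and is precisely the standard verification that the paper omits (the paper states the proposition without proof, prefacing it only with ``The following is easy to observe''). Your argument---checking well-definedness and the Leibniz rule on representatives for the first part, and for the second part setting $g_t = D(\La_t)$ and using the unique extension of derivations to algebraic extensions in characteristic zero via the minimal polynomial---is exactly the natural route and needs no correction.
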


\begin{prop}\label{differentialdiffeo}
Assume that $(K,\delta)$ is a differential field of characteristic zero, let $\varphi:K\to \mathcal N^\K_{\varOmega}$ be a $\eq$-embedding, and let $\mathcal{K}=\varphi(K)$. Then the  mapping $\delta_\varphi:\mathcal{K}\to \mathcal{K}$ defined by 
$$
\delta_\varphi(f)=\varphi (\delta(\varphi^{-1}(f)))
$$
is a derivation on $\mathcal{K}$, and $\varphi$ is a $\eq$-differential isomorphism of the differential fields $(K,\delta)$, $(\mathcal{K},\delta_\varphi)$. 
\end{prop}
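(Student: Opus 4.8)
\textbf{Proof plan for Proposition \ref{differentialdiffeo}.}

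The plan is to verify the two assertions separately, starting from the observation that $\delta_\varphi$ is by construction the conjugate of $\delta$ by the field isomorphism $\varphi\colon K\to\mathcal{K}$, so everything should reduce to transporting the relevant identities along $\varphi$. First I would check that $\delta_\varphi$ is well defined: since $\varphi$ is a $\eq$-embedding it is injective, hence $\varphi^{-1}\colon\mathcal{K}\to K$ is a well-defined ring isomorphism onto $K$, and $\delta(\varphi^{-1}(f))\in K$, so $\delta_\varphi(f)=\varphi(\delta(\varphi^{-1}(f)))\in\mathcal{K}$. Next I would verify that $\delta_\varphi$ is a $\eq$-derivation on $\mathcal{K}$. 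Additivity and $\eq$-linearity follow immediately because $\varphi$, $\varphi^{-1}$ and $\delta$ are all additive and $\eq$-linear, and $\varphi$ fixes $\eq$ pointwise (being a $\eq$-embedding), so for $c\in\eq$, $\delta_\varphi(cf)=\varphi(\delta(c\varphi^{-1}(f)))=\varphi(c\,\delta(\varphi^{-1}(f)))=c\,\delta_\varphi(f)$. For the Leibniz rule, take $f_1,f_2\in\mathcal{K}$, write $a_i=\varphi^{-1}(f_i)\in K$, and compute
$$
\delta_\varphi(f_1f_2)=\varphi\bigl(\delta(a_1a_2)\bigr)=\varphi\bigl(\delta(a_1)a_2+a_1\delta(a_2)\bigr)=\varphi(\delta(a_1))\varphi(a_2)+\varphi(a_1)\varphi(\delta(a_2)),
$$
using that $\varphi$ is multiplicative; since $\varphi(a_i)=f_i$ and $\varphi(\delta(a_i))=\delta_\varphi(f_i)$, this equals $\delta_\varphi(f_1)f_2+f_1\delta_\varphi(f_2)$, as required. (Strictly, one should also note that $\mathcal{K}$ is a subfield of $\mathcal N^\K_{\varOmega}$ closed under the operations, so $(\mathcal{K},\delta_\varphi)$ is a genuine differential field; this is immediate since $\varphi$ is a field embedding.)

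It then remains to show that $\varphi$ is a $\eq$-differential isomorphism from $(K,\delta)$ onto $(\mathcal{K},\delta_\varphi)$. By hypothesis $\varphi$ is a $\eq$-embedding, and it is surjective onto $\mathcal{K}=\varphi(K)$ by definition, so it is a $\eq$-algebra isomorphism $K\to\mathcal{K}$. To see that it intertwines the derivations, take any $a\in K$ and set $f=\varphi(a)$, so that $\varphi^{-1}(f)=a$; then
$$
\delta_\varphi(\varphi(a))=\delta_\varphi(f)=\varphi(\delta(\varphi^{-1}(f)))=\varphi(\delta(a)),
$$
which is exactly the identity $\delta_\varphi\circ\varphi=\varphi\circ\delta$ defining a differential isomorphism. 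This completes the proof.

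I do not expect any genuine obstacle here: the statement is essentially the trivial fact that a derivation can be transported along a field isomorphism, and the only points requiring care are the mild bookkeeping that $\varphi$ being a $\eq$-embedding makes $\varphi^{-1}$ available on $\mathcal{K}$ and forces $\eq$-linearity, and that $\mathcal{K}$ is a subfield so the transported structure lives where claimed. The ``easy to observe'' in the statement is accurate, so the write-up above is close to complete as given.
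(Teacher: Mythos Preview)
Your proof is correct and follows essentially the same approach as the paper: verify $\eq$-linearity, check the Leibniz rule by writing $\delta_\varphi(f_1f_2)=\varphi(\delta(\varphi^{-1}(f_1)\varphi^{-1}(f_2)))$ and expanding, and then confirm the intertwining relation $\delta_\varphi\circ\varphi=\varphi\circ\delta$ directly from the definition. The paper's version is just more terse, omitting the well-definedness remark and the explicit $\eq$-linearity check you include.
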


\begin{proof} Obviously $\delta_\varphi$ is a $\eq$-linear mapping, and for any $f,g\in \mathcal{K}$,
\begin{equation*}
\begin{split}
\delta_\varphi(fg)&=\varphi(\delta(\varphi^{-1}(fg)))=\varphi(\delta(\varphi^{-1}(f)\varphi^{-1}(g)))\\
&=\varphi(\delta(\varphi^{-1}(f)))g+f\varphi(\delta(\varphi^{-1}(g)))=\delta_\varphi(f)g+f\delta_\varphi(g).
\end{split}
\end{equation*}
On the other hand, for any $a\in K$, 
$\varphi(\delta(a))=\varphi(\delta(\varphi^{-1}(\varphi (a))))=
\delta_\varphi (\varphi(a))$, 
 which completes the proof.
\end{proof} 

\begin{twr}\label{charactdiffclosedfield}
Let $(K,\delta)$ be a differentially closed field of characteristic zero. Then there exists an infinite set $T$ such that $(K,\delta)$ is $\eq$-differentially isomorphic to $(\mathcal N^\cc_{\varOmega},\delta_g)$ for an arbitrary $c$-filter $\varOmega$ 
 in $\cc^T$ and some family 
\begin{equation}\label{eqformfamilyg}
g=(g_t\in \mathcal N^\cc_{\varOmega}: t\in T).
\end{equation}
\end{twr}

\begin{proof} Let $T$ be a transcendence basis of $K$ over $\eq$. By Proposition \ref{differentiallyclosedtranscendence}, $T$ is an infinite set. Since $K$ is algebraically closed, being differentially closed,  Proposition \ref{realandalgebraicclos}(b) implies that $K$ is $\eq$-isomorphic to $\mathcal N^\cc_{\varOmega}$ for an arbitrary  $c$-filter $\varOmega$ 
in $\cc^T$. Then, by Propositions \ref{derywacjenaNash1} and \ref{differentialdiffeo} we see that $(K,\delta)$ is $\eq$-differentially isomorphic to ($\mathcal N^\cc_{\varOmega},\delta_g)$ for some family $g$ of the form \eqref{eqformfamilyg}.
\end{proof}

\subsection{A derivation which makes the field of Nash functions differentially closed}\label{constrdiimakesdifferentially}

Let $T$ be a linearly ordered infinite set with ordering $\succ$. Let $\varOmega$ be a $c$-filter in 
 $\cc^T$ (e.g., the one defined in Section \ref{ordering in Rm}). Set 
$$
\frakN=\mathcal N^\cc_{\varOmega}.
$$ 
Consider the ring of polynomials
$$
\frakN[Y]=\frakN [Y_{j}:j\in\en ].
$$
For any $\frakP\in\frakN[Y]$ we set 
$$
D(\frakP)=\left\{t\in T:
\frac{\partial \frakP}{\partial \La_{t}}=0\right\}.
$$
Obviously $T\setminus D(\frakP)$ is a finite set.

For $\frakP\in\frakN[Y]$ with $\deg \frakP> 0$, we set 
$$
\alpha(\frakP)=\max\left\{j\in\en: \deg_{Y_j}\frakP>0\right\},
$$
where $\deg_{Y_j}\frakP$ denotes the degree of $\frakP$ as a polynomial in $Y_j$. 
Additionally we set $\alpha(\frakP)=-1$ if $\frakP\in \frakN\setminus\{0\}$, and $\alpha(0)=-\infty$.

Define
$$
\mathcal{A}=\{(\frakP,\frakG)\in \frakN[Y]^2:
\alpha(\frakP)>\alpha(\frakG)\ge -1\}.
$$

\begin{fact}\label{factdiff2}
The sets $T$, $\frakN$, $\frakN[Y]$ and $\mathcal{A}$ have the same cardinality.
\end{fact}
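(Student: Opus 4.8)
The plan is to establish all four claimed cardinality equalities by a chain of elementary comparisons, using the hypothesis that $T$ is infinite as the base case. First I would observe that $\mathcal{N}^\cc_\varOmega$ is built from countably many objects over each $U\in\varOmega$: each element is an equivalence class of a $\cc$-$\eq$-Nash function, and such a function is determined by a finite tuple of variables from $\La_T$ together with a nonzero polynomial $P\in\eq[\La_T,Z]$ annihilating it plus a choice of branch. Since $\eq$ is countable, the polynomials $P\in\eq[\La_{t_1},\ldots,\La_{t_m},Z]$ for a fixed finite tuple form a countable set, and the number of branches is finite; hence the functions depending on a fixed finite tuple of variables form a countable set. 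Summing over all finite tuples from $T$, which number $\lvert T\rvert$ since $T$ is infinite, gives $\lvert\mathcal{N}^\cc_\varOmega\rvert\le\aleph_0\cdot\lvert T\rvert=\lvert T\rvert$. Conversely, the variables $\La_t$, $t\in T$, already give $\lvert T\rvert$ distinct elements of $\frakN=\mathcal{N}^\cc_\varOmega$ (they are algebraically independent over $\eq$ by Proposition \ref{realandalgebraicclos}(b)), so $\lvert\frakN\rvert=\lvert T\rvert$.

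Next I would pass from $\frakN$ to the polynomial ring $\frakN[Y]=\frakN[Y_j:j\in\en]$. An element of this ring is a finite $\frakN$-linear combination of monomials in countably many variables $Y_j$; the set of monomials is countable, so $\frakN[Y]$ is a countable union of finite-support functions from a countable set into $\frakN$, giving $\lvert\frakN[Y]\rvert\le\aleph_0^{\aleph_0}\cdot\aleph_0\cdot\lvert\frakN\rvert$. More carefully, a polynomial has finite support, so the count is $\bigcup_{k}(\text{finite subsets of monomials})\times\frakN^k$, which has cardinality $\aleph_0\cdot\lvert\frakN\rvert=\lvert T\rvert$; and the reverse inequality is trivial since $\frakN\subset\frakN[Y]$. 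Finally $\mathcal{A}\subset\frakN[Y]^2$, so $\lvert\mathcal{A}\rvert\le\lvert T\rvert^2=\lvert T\rvert$; for the reverse inequality I would exhibit an injection, e.g.\ $f\mapsto(Y_1-f,\,1)$ or any similar map landing in $\mathcal{A}$ (noting $\alpha(Y_1-f)=1>-1=\alpha(1)$ when $f\in\frakN\setminus\{0\}$, and handling $f=0$ separately), which gives $\lvert T\rvert=\lvert\frakN\rvert\le\lvert\mathcal{A}\rvert$.

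The only genuinely non-routine point is the upper bound $\lvert\frakN\rvert\le\lvert T\rvert$, which rests on the ``finitely many variables'' property already emphasized in the text and on the countability of $\eq$ and of each algebraic fiber; everything else is the standard arithmetic of infinite cardinals ($\kappa$ infinite $\Rightarrow$ $\aleph_0\cdot\kappa=\kappa$ and $\kappa^2=\kappa$, and the set of finite sequences over $\kappa$ has cardinality $\kappa$). I would present the argument as a single short paragraph: bound $\frakN$ from above and below, then note that forming a polynomial ring in countably many variables over an infinite field does not increase cardinality, and that passing to a subset of a square does not either, while the trivial inclusions and the explicit injection give the matching lower bounds.
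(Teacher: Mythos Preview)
Your argument is correct. The paper does not prove this fact at all: it simply prefaces the statement with ``Since $T$ is infinite, we immediately obtain'' and moves on, treating it as self-evident cardinal arithmetic. Your proposal supplies exactly the details one would expect behind that phrase, and the reasoning is sound throughout.

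A couple of minor remarks. First, the cleanest route to $\lvert\frakN\rvert=\lvert T\rvert$ is to invoke Proposition~\ref{realandalgebraicclos}(b) directly: $\frakN$ is the algebraic closure of $\eq(\La_T)$, the latter has cardinality $\lvert T\rvert$ since $T$ is infinite and $\eq$ is countable, and the algebraic closure of an infinite field has the same cardinality as the field. Your branch-counting argument amounts to the same thing but is slightly less crisp. Second, your injection $f\mapsto(Y_1-f,1)$ into $\mathcal{A}$ already works for $f=0$ (since $\alpha(Y_1)=1>-1=\alpha(1)$), so no separate case is needed. Neither point affects correctness.
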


{\color{black}\begin{proof} 
Since $T$ is  infinite, it has the same cardinality as the set $\operatorname{Fin}(T)$ of all finite subsets of $T$.  So, $\eq[\La_T]$ has cardinality $\operatorname{card}T$, because it is the union of the countable sets $\eq[\La_{t_1},\ldots,\La_{t_m}]$ for $\{t_1,\ldots,t_m\}\in \operatorname{Fin}(T)$.  Consequently, $\operatorname{card}\eq[\La_T,Z]=\operatorname{card}T$. Hence $\operatorname{card}\frakN=\operatorname{card}T$, because any polynomial $P\in\eq[\La_T,Z]$ determines a finite subset of $\frakN$ and the set $\eq[\La_T]$ of cardinality $\operatorname{card}T$ is contained in $\frakN$. 
Analogously, $\frakN[Y]$ is the union of the sets $\frakN[Y_{0},\ldots,Y_{m}]$, $m\in\en$, 
 so $\operatorname{card}\frakN[Y]=\operatorname{card}T$. Since $[Y_1(\frakN[Y]\setminus\{0\})]\times \{1\}\subset
 \mathcal{A}\subset \frakN[Y]^2$, we obtain $\operatorname{card}\mathcal{A}=\operatorname{card}T$.
\end{proof}}

\begin{fact}\label{factdiff3}
There exists a family of  pairwise disjoint infinite and countable subsets $T_{\frakP,\frakG}\subset T$, $(\frakP,\frakG)\in\mathcal{A}$, such that
$$
T=\bigcup_{(\frakP,\frakG)\in \mathcal{A}}T_{\frakP,\frakG}.
$$
\end{fact}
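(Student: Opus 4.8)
The plan is to reduce the statement to the standard cardinal-arithmetic identity $\kappa=\kappa\cdot\aleph_0$ for infinite cardinals $\kappa$, which is available here since the Kuratowski--Zorn Lemma (equivalently, the axiom of choice) is assumed throughout the paper.

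First I would invoke Fact \ref{factdiff2} to set $\kappa=\operatorname{card}\Deltaa=\operatorname{card}\mathcal{A}$; since $\Deltaa$ is infinite, $\kappa$ is an infinite cardinal, and therefore $\kappa\cdot\aleph_0=\kappa$. Hence there is a bijection $\Phi:\mathcal{A}\times\en\to\Deltaa$. Next I would define, for each $(\frakP,\frakG)\in\mathcal{A}$,
$$
\Deltaa_{\frakP,\frakG}=\Phi\bigl(\{(\frakP,\frakG)\}\times\en\bigr).
$$
Each $\Deltaa_{\frakP,\frakG}$ is the injective image of a set in bijection with $\en$, hence countably infinite; the sets are pairwise disjoint because the slices $\{(\frakP,\frakG)\}\times\en$ are pairwise disjoint in $\mathcal{A}\times\en$ and $\Phi$ is injective; and $\bigcup_{(\frakP,\frakG)\in\mathcal{A}}\Deltaa_{\frakP,\frakG}=\Phi(\mathcal{A}\times\en)=\Deltaa$ since $\Phi$ is onto. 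This yields exactly the asserted decomposition.

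There is essentially no obstacle beyond correctly citing the cardinal identity; the only subtlety worth flagging is the reliance on the axiom of choice --- through $\kappa=\kappa\cdot\aleph_0$, or equivalently through the fact that every infinite set is a disjoint union of countably infinite subsets --- which is consistent with the standing assumptions of the paper. Alternatively, one could avoid cardinal arithmetic altogether by well-ordering $\Deltaa$ in order type a cardinal and peeling off countably infinite blocks by transfinite recursion indexed along a well-ordering of $\mathcal{A}$, but the bijection argument above is the most economical and is the one I would present.
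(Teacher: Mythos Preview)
Your argument is correct and is essentially the same as the paper's: the paper fixes a bijection $\tau:\en\times\Deltaa\to\Deltaa$ and a bijection $\eta:\mathcal{A}\to\Deltaa$ (the latter from Fact~\ref{factdiff2}) and sets $\Deltaa_{\frakP,\frakG}=\tau(\en\times\{\eta(\frakP,\frakG)\})$, which is exactly your $\Phi$ with $\Phi=\tau\circ(\mathrm{id}_\en\times\eta)$ up to the order of the factors. The only cosmetic difference is that the paper records the two bijections separately rather than composing them into a single $\Phi$.
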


\begin{proof}
Since $T$ is infinite,  there exists a bijection $\tau:\en\times T\to T$. By Fact \ref{factdiff2} there exists a bijection $\eta:\mathcal{A}\to T$. Thus for $T_{\frakP,\frakG}=\tau(\en\times\{\eta(\frakP,\frakG)\})\subset T$, $(\frakP,\frakG)\in\mathcal{A}$, we obtain the assertion.
\end{proof}

\begin{fact}\label{factdiff4}
Let $(\frakP,\frakG)\in\mathcal{A}$. For any $t_{\frakP,\frakG,0},\ldots,t_{\frakP,\frakG,\alpha(\frakP)-1}\in D(\frakP)\cap D(\frakG)\cap {\color{black} T_{\frakP,\frakG}}$ such that $t_{\frakP,\frakG,0}\prec \cdots\prec t_{\frakP,\frakG,\alpha(\frakP)-1}$ we have  
\begin{equation*}\label{eqfactdiff41}
\frakG(\La_{t_{\frakP,\frakG,0}},\ldots,\La_{t_{\frakP,\frakG,\alpha(\frakP)-1}})\ne 0
\end{equation*}
and 
\begin{equation*}\label{eqfactdiff42}
\deg_{Y_{\alpha(\frakP)}}\frakP(\La_{t_{\frakP,\frakG,0}},\ldots,\La_{t_{\frakP,\frakG,\alpha(\frakP)-1}},Y_{\alpha(\frakP)})>0,
\end{equation*}
under the natural convention when $\alpha(\frakP)=0$. 
Moreover, points $t_{\frakP,\frakG,0},\ldots,$\linebreak $t_{\frakP,\frakG,\alpha(P)-1}\in D(\frakP)\cap D(\frakG)\cap {\color{black} T_{\frakP,\frakG}}$ such that $t_{\frakP,\frakG,0}\prec \cdots\prec t_{\frakP,\frakG,\alpha(\frakP)-1}$ always exist, provided $\alpha(\frakP)>0$.
\end{fact}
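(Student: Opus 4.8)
The plan is to prove the three assertions of Fact~\ref{factdiff4} essentially by unwinding the definitions of $D(\frakP)$, $D(\frakG)$, and $\alpha$, together with the fact that the variables $\La_t$ are algebraically independent over $\eq$ (hence over $\frakN$ as abstract indeterminates, in the precise sense that $\frac{\partial}{\partial\La_t}$ annihilates a coefficient exactly when that variable does not occur). First I would record the structural consequence of $t\in D(\frakP)$: since $\frac{\partial\frakP}{\partial\La_t}=0$, the variable $\La_t$ literally does not appear in (any chosen representative of) the coefficients of $\frakP\in\frakN[Y]$; likewise $t\in D(\frakG)$ means $\La_t$ does not occur in $\frakG$. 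Consequently, for $t_0,\ldots,t_{\alpha(\frakP)}\in D(\frakP)\cap D(\frakG)$, substituting $Y_0\mapsto\La_{t_0},\ldots,Y_{\alpha(\frakP)-1}\mapsto\La_{t_{\alpha(\frakP)-1}}$ into $\frakG$ introduces fresh indeterminates that are algebraically independent from everything occurring in $\frakG$.

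For \eqref{eqfactdiff41}: by the definition of $\alpha$ we have $\alpha(\frakG)\ge -1$, i.e.\ $\frakG$ involves only the variables $Y_0,\ldots,Y_{\alpha(\frakG)}$, and $\alpha(\frakG)<\alpha(\frakP)$, so $\alpha(\frakG)\le\alpha(\frakP)-1$; thus the substitution $Y_j\mapsto\La_{t_j}$ for $j=0,\ldots,\alpha(\frakP)-1$ specializes all the variables that actually occur in $\frakG$. Since $\frakG\ne 0$ and the $\La_{t_j}$ are algebraically independent over the coefficient field of $\frakG$ (here is where $t_j\in D(\frakG)$ is used), the specialization is injective on $\frakG$'s polynomial ring, hence $\frakG(\La_{t_0},\ldots,\La_{t_{\alpha(\frakP)-1}})\ne 0$; in the case $\alpha(\frakP)=0$ one has $\alpha(\frakG)=-1$, so $\frakG\in\frakN\setminus\{0\}$ and there is nothing to substitute, consistent with the stated convention.

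For \eqref{eqfactdiff42}: write $\frakP$ as a polynomial in $Y_{\alpha(\frakP)}$ with coefficients in $\frakN[Y_0,\ldots,Y_{\alpha(\frakP)-1}]$; by the definition of $\alpha(\frakP)$ the leading coefficient $c\in\frakN[Y_0,\ldots,Y_{\alpha(\frakP)-1}]$ is nonzero. Applying the same substitution $Y_j\mapsto\La_{t_j}$ for $j<\alpha(\frakP)$ and using $t_j\in D(\frakP)$ (so $\La_{t_j}$ is algebraically independent over the coefficients of $\frakP$), we get $c(\La_{t_0},\ldots,\La_{t_{\alpha(\frakP)-1}})\ne 0$ by the same injectivity argument, which says precisely that the $Y_{\alpha(\frakP)}$-degree is preserved, giving \eqref{eqfactdiff42}. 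Finally, existence of such points $t_{\frakP,\frakG,0}\prec\cdots\prec t_{\frakP,\frakG,\alpha(\frakP)}$ in $D(\frakP)\cap D(\frakG)$: both $\Deltaa\setminus D(\frakP)$ and $\Deltaa\setminus D(\frakG)$ are finite (as noted after the definition of $D(\frakP)$), so $D(\frakP)\cap D(\frakG)$ is a cofinite, hence infinite, subset of the linearly ordered set $\Deltaa$, and we simply pick any $\alpha(\frakP)+1$ of its elements and list them in increasing order. The only mild subtlety—the step I would flag as the real content rather than bookkeeping—is making the "algebraic independence of the substituted $\La_{t_j}$ over the coefficients appearing in $\frakP$ and $\frakG$" precise: it rests on the observation that elements of $\frakN=\mathcal N^\cc_\varOmega$ depend on only finitely many variables and that $t_j\notin D(\frakP)^{c}$ guarantees $\La_{t_j}$ is not among those occurring in $\frakP$ (and similarly for $\frakG$), so the polynomial ring $\frakN[Y_0,\ldots,Y_{\alpha(\frakP)-1}]$ maps isomorphically onto its image under $Y_j\mapsto\La_{t_j}$.
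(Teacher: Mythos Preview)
Your proof is correct and follows essentially the same approach as the paper's, which is a terse two-sentence sketch (``$\frakG$ depends on at most the first $\alpha(\frakP)-1$ variables $Y_j$; $D(\frakP)\cap D(\frakG)$ is infinite; hence we immediately deduce the assertion''). You have simply unpacked the implicit algebraic-independence step that the paper leaves to the reader, namely that the substitution $Y_j\mapsto\La_{t_j}$ is injective on the relevant polynomial rings because the $t_j$ lie in $D(\frakP)\cap D(\frakG)$ and so the $\La_{t_j}$ do not occur among the coefficients.
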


\begin{proof} If $\alpha(\frakP)=0$ then the assertion is trivial. Assume that $\alpha(\frakP)>0$. 
By the definition of   $\mathcal{A}$, the polynomial $\frakG$ depends on at most $\alpha(\frakP)-1$ first variables $Y_j$. Since $D(\frakP)\cap D(\frakG)\cap {\color{black} T_{\frakP,\frakG}}$ is an infinite set,  there exist $t_0,\ldots,t_{\alpha(\frakP)-1}\in D(\frakP)\cap D (\frakG)\cap {\color{black} T_{\frakP,\frakG}}$ such that  $t_0\prec \cdots\prec t_{\alpha(\frakP)-1}$, and hence we immediately deduce the assertion.
\end{proof}

{\color{black}Assume that we have fixed a choice of $t_{\frakP,\frakG,i}$ for $(\frakP,\frakG)\in\mathcal{A}$ as in Fact \ref{factdiff4}.}

Let $(\frakP,\frakG)\in\mathcal{A}$ and let $g_{\frakP,\frakG}\in \frakN $ be a solution of the equation
$$
\frakP(\La_{t_{\frakP,\frakG,0}},\ldots,\La_{t_{\frakP,\frakG,\alpha(\frakP)-1}},Y_{\alpha(\frakP)})=0
$$
with respect to $Y_{\alpha(\frakP)}$. Recall that $\frakN=\mathcal N^\cc_{\varOmega}$ is an algebraically closed field, so $g_{\frakP,\frakG}\in \frakN $ always exists.

Define a family $g$ of points $g_t\in\frakN$, 
 $t\in T$, by
\begin{equation}\label{selection}
g_t=\begin{cases}\La_{t_{\frakP,\frakG,i+1}}&\hbox{for }t=t_{\frakP,\frakG,i},\;i=0,\ldots,\alpha(\frakP)-2,\\
g_{\frakP,\frakG}&\hbox{for } t=t_{\frakP,\frakG,\alpha(\frakP)-1},\\
f_t &\hbox{for }t\in T_{\frakP,\frakG}\setminus\{t_{\frakP,\frakG,0},\ldots,t_{\frakP,\frakG,\alpha(\frakP)-1}\},
\end{cases}
\end{equation}
under the natural convention when $\alpha(\frakP)\in\{0,1\}$, 
where $f_t\in \frakN$ 
are arbitrary for $t\in T_{\frakP,\frakG}\setminus\{t_{\frakP,\frakG,0},\ldots,t_{\frakP,\frakG,\alpha(\frakP)-1}\}$,  
for each $(\frakP,\frakG)\in \mathcal{A}$. Since $t_{\frakP,\frakG,i}\in T_{\frakP,\frakG}$, by Fact \ref{factdiff3} the family $g$ is well defined. 
Consider the derivation
\begin{equation*}\label{defivationderivative}
\delta_g(f) =\sum_{t\in T}g_t\frac{\partial f}{\partial \La_t}\quad\hbox{for }f\in \frakN .
\end{equation*}

\begin{twr}\label{maindifferclosed1}
$(\frakN, 
\delta_g)$ is a differentially closed field.
\end{twr}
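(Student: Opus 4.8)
The plan is to verify L. Blum's criterion directly: given differential polynomials $p,q\in\frakN\{y\}$ with $\ord q<\ord p$ and $q\ne0$, one must produce $f\in\frakN$ with $p(f)=0$ and $q(f)\ne0$. The first step is to translate these differential polynomials into ordinary polynomials in several indeterminates. Writing $n=\ord p$, we have $p=p^*(y,\delta_g y,\ldots,\delta_g^n y)$, so there is a polynomial $\frakP\in\frakN[Y_0,\ldots,Y_n]\subset\frakN[Y]$ with $\deg_{Y_n}\frakP>0$, hence $\alpha(\frakP)=n$; similarly $q$ corresponds to some $\frakG\in\frakN[Y_0,\ldots,Y_{n-1}]$, nonzero, so $\alpha(\frakG)\le n-1<n=\alpha(\frakP)$. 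Thus $(\frakP,\frakG)\in\mathcal A$, and the whole construction of $g$ via \eqref{selection} was rigged precisely to handle this pair on the block $\Deltaa_{\frakP,\frakG}$.

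Next I would exhibit the candidate solution. Take the points $t_{\frakP,\frakG,0}\prec\cdots\prec t_{\frakP,\frakG,n}$ in $D(\frakP)\cap D(\frakG)$ supplied by Fact \ref{factdiff4}, and set $f=\La_{t_{\frakP,\frakG,0}}$. The key computation is that, by the definition of $g$ in \eqref{selection}, $\delta_g(\La_{t_{\frakP,\frakG,i}})=\La_{t_{\frakP,\frakG,i+1}}$ for $i=0,\ldots,n-2$ and $\delta_g(\La_{t_{\frakP,\frakG,n-1}})=g_{\frakP,\frakG}$; indeed since each $g_t$ for $t\in\Deltaa_{\frakP,\frakG}$ is (a variable or) an element on which the relevant partials vanish appropriately, one checks inductively that $\delta_g^i(f)=\La_{t_{\frakP,\frakG,i}}$ for $i=0,\ldots,n-1$ and $\delta_g^n(f)=g_{\frakP,\frakG}$. (Here one uses that the variables $\La_{t_{\frakP,\frakG,j}}$ lie in $D(\frakP)$, so applying further derivations does not reintroduce derivatives of these variables into lower-order terms; the only place where $\delta_g$ acts nontrivially along the chain is by shifting indices, and the chain terminates at the chosen root $g_{\frakP,\frakG}$.) Consequently
$$
p(f)=\frakP^*(f,\delta_g f,\ldots,\delta_g^n f)=\frakP(\La_{t_{\frakP,\frakG,0}},\ldots,\La_{t_{\frakP,\frakG,n-1}},g_{\frakP,\frakG})=0
$$
by the very choice of $g_{\frakP,\frakG}$ as a root of $\frakP(\La_{t_{\frakP,\frakG,0}},\ldots,\La_{t_{\frakP,\frakG,n-1}},Y_n)=0$ (this polynomial being nonconstant in $Y_n$ by \eqref{eqfactdiff42}, so a root exists in the algebraically closed field $\frakN$). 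Likewise
$$
q(f)=\frakG(\La_{t_{\frakP,\frakG,0}},\ldots,\La_{t_{\frakP,\frakG,n-1}})\ne0
$$
by \eqref{eqfactdiff41}.

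The main obstacle I anticipate is the careful bookkeeping in the claim $\delta_g^i(f)=\La_{t_{\frakP,\frakG,i}}$: one must be sure that when $\delta_g$ is applied, the sum $\sum_{t\in\Deltaa}g_t\,\partial/\partial\La_t$ picks up exactly the term $\La_{t_{\frakP,\frakG,i+1}}$ and nothing else. This requires checking that $g_{t_{\frakP,\frakG,i}}$ does not itself depend on $\La_{t_{\frakP,\frakG,i}}$ in a way that would spoil the induction, and — more delicately — that no $g_s$ for $s\ne t_{\frakP,\frakG,i}$ contributes, which holds because $\partial\La_{t_{\frakP,\frakG,i}}/\partial\La_s=0$ for $s\ne t_{\frakP,\frakG,i}$. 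A second subtlety: when $\alpha(\frakP)\in\{0,1\}$ the chain degenerates (for $n=0$ one has $f=\La_{t_{\frakP,\frakG,0}}$ and must directly solve $\frakP(Y_0)=0$; for $n=1$, $\delta_g(f)=g_{\frakP,\frakG}$ immediately), and these boundary cases should be dispatched under the ``natural convention'' already flagged in the construction. Once the derivative chain is established, everything else is immediate from Fact \ref{factdiff4} and the fact that $\frakN$ is algebraically closed (Proposition \ref{realandalgebraicclos}(b)), so no further genuine difficulty remains, and by Blum's definition $(\frakN,\delta_g)$ is differentially closed.
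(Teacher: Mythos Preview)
Your proposal is correct and follows essentially the same approach as the paper: verify Blum's criterion by passing to $\frakP,\frakG\in\frakN[Y]$, locate the pair in $\mathcal{A}$, and take $f=\La_{t_{\frakP,\frakG,0}}$ so that the derivative chain built into \eqref{selection} forces $p(f)=0$ and $q(f)\ne0$ via Fact~\ref{factdiff4}. The paper is considerably terser---it simply asserts the conclusion ``by Fact~\ref{factdiff3}'' once $f$ is named---whereas you spell out the chain $\delta_g^i(f)=\La_{t_{\frakP,\frakG,i}}$ explicitly; and the paper dispatches the case $\ord p=0$ at the outset by algebraic closedness rather than folding it into a boundary-case discussion, but these are organizational differences only.
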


\begin{proof}
Obviously $(\frakN,\delta_g)$ is a differential field. It suffices to prove that for each pair $p, q \in \frakN\{y\}$ of differential polynomials such that 
$\ord q< \ord p$, $q\ne 0$, there is some $f\in  \frakN$ with $p(f) = 0$ and $q(f)\ne 0$. Since the field $\frakN$ is algebraically closed,  this condition obviously holds in the case $\ord p=0$. Assume that $\ord p>0$.

Since $\frakN\{y\}=\frakN[y_{\delta_g^n}:n\in\en]$, there exists a one-to-one correspondence between $\frakN\{y\}$ and $\frakN[Y]$ determined by $Y_j\mapsto y_{\delta_g^j}$ for $j\in\en$. So, for any $p,q\in \frakN\{y\}$ with ${\color{black}n=}\ord p>\ord q$, $q\ne 0$, there exist $\frakP,\frakG\in\frakN[Y]$ with {\color{black} $\frakP=p^*$ and $\frakG=q^*$, i.e.,}
$$
p=\frakP(y_0,y_{\delta},\ldots,y_{\delta^n})\quad\hbox{and}\quad q=\frakG(y_0,y_{\delta},\ldots,y_{\delta^n}),
$$
{\color{black}$\alpha(\frakP)=n\ge 0$ and $\alpha(\frakG)=\ord q$. 
Then by the definition of $\delta_g$ for $f=\La_{t_{\frakP,\frakG,0}}\in\frakN$ we have 
 $\delta^m_g(f)=\La_{t_{\frakP,\frakG,m}}$ for $0\le m\le n-1$ and $\delta^n_g(f)=g_{\frakP,\frakG}$. So, by Facts \ref{factdiff3} and \ref{factdiff4}, 
 $p(f)=0$ and $q(f)\ne 0$, which completes the proof.}
\end{proof}

From the choice of $g$ in \eqref{selection} and Theorem \ref{maindifferclosed1} we have

\begin{cor}\label{corcardialitydifferrentailclos}
The set of all derivations $\delta$ on $ \mathcal{N}^\cc_{\varOmega}$  such that  $(\mathcal{N}^\cc_{\varOmega},\delta)$ is a differentially closed field has  cardinality $2^{\operatorname{card}(T)}$.
\end{cor}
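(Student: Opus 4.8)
The plan is to exhibit at least $2^{\operatorname{card}(T)}$ distinct families $g$ of the form \eqref{selection}, and to show that distinct such families produce distinct derivations $\delta_g$; by Theorem \ref{maindifferclosed1} each of these derivations makes $\mathcal{N}^\cc_\varOmega$ a differentially closed field, which yields the claimed lower bound on the cardinality of the set of such derivations. The point is that the construction \eqref{selection} leaves genuine freedom: for each $(\frakP,\frakG)\in\mathcal{A}$ the values $f_t$ for $t\in\Deltaa_{\frakP,\frakG}\setminus\{t_{\frakP,\frakG,0},\ldots,t_{\frakP,\frakG,\alpha(\frakP)-1}\}$ are \emph{arbitrary} elements of $\frakN$, and each such set of indices is infinite, so there is an abundance of choices.

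First I would isolate one convenient coordinate to vary. Fix $(\frakP_0,\frakG_0)\in\mathcal{A}$ and a single index $t^*\in\Deltaa_{\frakP_0,\frakG_0}$ lying outside the finitely many ``used'' points $\{t_{\frakP_0,\frakG_0,0},\ldots,t_{\frakP_0,\frakG_0,\alpha(\frakP_0)-1}\}$ (possible since $\Deltaa_{\frakP_0,\frakG_0}$ is infinite by Fact \ref{factdiff3}). For each $c\in\frakN$ let $g^{(c)}$ be the family obtained from \eqref{selection} by making all the forced choices as before and setting $f_{t^*}=c$ (and fixing, once and for all, some choice of the remaining free values). Then for $c\ne c'$ we have $g^{(c)}_{t^*}=c\ne c'=g^{(c')}_{t^*}$, hence by Proposition \ref{derywacjenaNash1} the derivations differ: $\delta_{g^{(c)}}(\La_{t^*})=c\ne c'=\delta_{g^{(c')}}(\La_{t^*})$, so $\delta_{g^{(c)}}\ne\delta_{g^{(c')}}$ as maps on $\mathcal{N}^\cc_\varOmega$. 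This gives an injection $\frakN\hookrightarrow\{\delta:(\mathcal{N}^\cc_\varOmega,\delta)\text{ differentially closed}\}$, so the target set has cardinality at least $\operatorname{card}\frakN$, which equals $\operatorname{card}T$ by Fact \ref{factdiff2}. To upgrade $\operatorname{card}T$ to $2^{\operatorname{card}T}$, I would instead vary a whole infinite block of coordinates: fix one $(\frakP_0,\frakG_0)\in\mathcal{A}$ and an infinite set $I\subset\Deltaa_{\frakP_0,\frakG_0}$ of unused indices, and for each subset $S\subseteq I$ define $g^{(S)}$ by setting $f_t=1$ for $t\in S$ and $f_t=0$ for $t\in I\setminus S$ (and making all forced choices and all other free choices as before). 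Distinct $S,S'$ differ at some $t\in I$, whence $\delta_{g^{(S)}}(\La_t)\ne\delta_{g^{(S')}}(\La_t)$ and the derivations are distinct. This produces $2^{\operatorname{card}I}$ distinct derivations; since $I$ can be taken of cardinality $\operatorname{card}T$ (for instance by choosing $(\frakP_0,\frakG_0)$ so that $\alpha(\frakP_0)$ is small and noting $\Deltaa_{\frakP_0,\frakG_0}$ is countably infinite — or, if $\operatorname{card}T>\aleph_0$, by spreading the variation over the whole partition $\{\Deltaa_{\frakP,\frakG}\}_{(\frakP,\frakG)\in\mathcal{A}}$, which has cardinality $\operatorname{card}T$ by Fact \ref{factdiff2}), we get at least $2^{\operatorname{card}T}$ such derivations, and by Theorem \ref{maindifferclosed1} each makes $\mathcal{N}^\cc_\varOmega$ differentially closed.

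I do not expect a serious obstacle here; the corollary is essentially a counting remark riding on the freedom already built into \eqref{selection}. The one point requiring a little care is the case $\operatorname{card}T=\aleph_0$: then each single block $\Deltaa_{\frakP,\frakG}$ is only countable, but there are countably many blocks, so to get $2^{\aleph_0}$ choices one should vary across all blocks simultaneously (pick one unused index $t_{\frakP,\frakG}\in\Deltaa_{\frakP,\frakG}$ per block and encode an arbitrary function $\mathcal{A}\to\{0,1\}$ by the values $f_{t_{\frakP,\frakG}}$). In all cases the separation argument is the same: by Proposition \ref{derywacjenaNash1} a derivation is determined by — and here, already distinguished by — its values on the variables $\La_t$, and the families $g^{(S)}$ were rigged to differ on some $\La_t$. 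Hence distinct codes give distinct differentially closed $(\mathcal{N}^\cc_\varOmega,\delta)$, and the bound $2^{\operatorname{card}T}$ follows.
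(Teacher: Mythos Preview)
Your proposal is correct and follows exactly the route the paper intends: the paper's ``proof'' is the single line ``From the choice of $g$ in \eqref{selection} and Theorem \ref{maindifferclosed1} we have,'' and your plan simply unpacks this by exploiting the arbitrary values $f_t$ left free in \eqref{selection} and using $\delta_g(\La_t)=g_t$ to separate derivations. One cosmetic simplification: rather than splitting into the cases $\operatorname{card}T=\aleph_0$ and $\operatorname{card}T>\aleph_0$, you can observe directly that the union over $(\frakP,\frakG)\in\mathcal A$ of the free index sets $\Deltaa_{\frakP,\frakG}\setminus\{t_{\frakP,\frakG,0},\ldots,t_{\frakP,\frakG,\alpha(\frakP)-1}\}$ has cardinality $\operatorname{card}\mathcal A\cdot\aleph_0=\operatorname{card}T$, so assigning $0/1$ values on this whole set already yields $2^{\operatorname{card}T}$ distinct families $g$ and hence distinct $\delta_g$.
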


\subsection{A universal extension of a differential field}\label{diffclosedextensionsec} 
Let $T\ne\emptyset$ be a linearly ordered set. 
 Take the $c$-filter $\WW^\cc_{T}$ in  $\cc^T$ defined in Section \ref{ordering in Rm}. Consider a pair $(T_1,T_2)$ of nonempty sets, where $T_1=T$ and $T_2$  is a linearly ordered infinite set such that $T_1\cap T_2=\emptyset$ and {$\operatorname{card}T_2=\max\{\operatorname{card}T,\operatorname{card}\en\}$}. Let $\succ$ be the ordering of $T_1\cup T_2$ defined in Section \ref{sectio26prime}. 
Let 
$$
\frakN=\mathcal{N}^\cc_{T_1\cup T_2}
$$
be the extension of $\mathcal{N}^\cc_{T}$ (see Proposition \ref{nashextension}). 

Let $T_2=T_3\cup T_4$, where $T_3\cap T_4=\emptyset$ and $\operatorname{card}T_3=\operatorname{card}T_2$.

We will use similar notation to that of Section \ref{constrdiimakesdifferentially}. 
 Consider the ring of polynomials 
$\frakN[Y]=\frakN[Y_{j}:j\in\en ]$ and set 
$$
D_{T_3}(\frakP)=\left\{t\in T_3:
\frac{\partial \frakP}{\partial \La_{t}}=0\right\}\quad\hbox{for }\frakP\in\frakN[Y].
$$
Then $T_3\setminus D_{T_3}(\frakP)$ is a finite set.

Consider the set 
$
\mathcal{A}=\{(\frakP,\frakG)\in \frakN[Y]^2:
\alpha(\frakP)>\alpha(\frakG)\ge -1\}.
$

We easily see that the sets $T_2$, $\frakN$, $\frakN[Y]$  and 
$\mathcal{A}$ have the same cardinality (cf. Fact \ref{factdiff2}).
By Fact \ref{factdiff3} there exists a family of  pairwise disjoint countable subsets $T_{\frakP,\frakG}\subset T_3$, $(\frakP,\frakG)\in\mathcal{A}$, such that
$$
T_3=\bigcup_{(\frakP,\frakG)\in \mathcal{A}}T_{\frakP,\frakG}.
$$

For any $(\frakP,\frakG)\in\mathcal{A}$ the set $D_{T_3}(\frakP)\cap D_{T_3}(\frakG)\cap T_{\frakP,\frakG}$ is infinite. So there are $t_{\frakP,\frakG,\ell,0},\ldots,t_{\frakP,\frakG,\ell,\alpha(\frakP)-1}\in D_{T_3}(\frakP)\cap D_{T_3}(\frakG)\cap T_{\frakP,\frakG}$, where $1\le \ell\le k$, $k=\deg_{Y_{\alpha(\frakP)}}\frakP\ge 1$, such that 
\begin{equation}\label{eqordert1}
t_{\frakP,\frakG,1,0}\prec\cdots\prec t_{\frakP,\frakG,1,\alpha(\frakP)-1}\prec\cdots\prec t_{\frakP,\frakG,k,0}\prec\cdots\prec t_{\frakP,\frakG,k,\alpha(\frakP)-1}\; ,
\end{equation}
\begin{equation*}\label{eqfactdiff41x}
\frakG(\La_{t_{\frakP,\frakG,\ell,0}},\ldots,\La_{t_{\frakP,\frakG,\ell,\alpha(\frakP)-1}})\ne 0,\quad 1\le \ell\le k,
\end{equation*}
and 
\begin{equation}\label{eqordert2}
k=\deg_{Y_{\alpha(\frakP)}}\frakP(\La_{t_{\frakP,\frakG,\ell,0}},\ldots,\La_{t_{\frakP,\frakG,\ell,\alpha(\frakP)-1}},Y_{\alpha(\frakP)}),\quad 1\le \ell\le k.
\end{equation}
Hence there exist $h_{\frakP,\frakG,\ell}\in\mathcal{N}^\cc_{T\cup \{t_{\frakP,\frakG,\ell,0},\ldots ,t_{\frakP,\frakG,\ell,\alpha(\frakP)-1}\}}\subset \frakN$, $1\le \ell\le k $, 
 such that  
\begin{equation}\label{eqordert3}
\frakP(\La_{t_{\frakP,\frakG,\ell,0}},\ldots,\La_{t_{\frakP,\frakG,\ell,\alpha(\frakP)-1}},h_{\frakP,\frakG,\ell})=0,\quad 1\le \ell\le k,
\end{equation}
and for some $1\le j\le k$ (equivalently, for each $1\le j\le k$),
\begin{equation}\label{eqallhPQell}
\begin{split}
&h_{\frakP,\frakG,\ell}(\La_{t_{\frakP,\frakG,j,0}},\ldots,\La_{t_{\frakP,\frakG,j,\alpha(\frakP)-1}}),\; 
1\le \ell\le k,\hbox{ are all solutions counted}\\
&\hbox{with multiplicity of the equation }\frakP(\La_{t_{\frakP,\frakG,j,0}},\ldots,\La_{t_{\frakP,\frakG,j,\alpha(\frakP)-1}},Y_{\alpha(\frakP)})=0.
\end{split}
\end{equation}
Obviously $h_{\frakP,\frakG,\ell}(\La_{t_{\frakP,\frakG,j,0}},\ldots,\La_{t_{\frakP,\frakG,j,\alpha(\frakP)-1}})\in\mathcal{N}^\cc_{T\cup \{t_{\frakP,\frakG,j,0},\ldots ,t_{\frakP,\frakG,j,\alpha(\frakP)-1}\}}\subset \frakN  $ for any $1\le j\le k$. 

Define a family $h$ of points $h_t\in \frakN $, $t\in T_3$, by
\begin{equation*}\label{selectionx}
h_t=\begin{cases}\La_{t_{\frakP,\frakG,\ell,i+1}}&\hbox{for }t=t_{\frakP,\frakG,\ell,i},\; 0\le i \le \alpha(\frakP)-2,\; 1\le \ell\le k ,\\
h_{\frakP,\frakG,\ell}&\hbox{for } t=t_{\frakP,\frakG,\ell,\alpha(\frakP)-1},\;1\le \ell\le k ,\\
f_t &\hbox{for }t\in T_{\frakP,\frakG}\setminus\{t_{\frakP,\frakG,1,0},\ldots,t_{\frakP,\frakG,k,\alpha(\frakP)-1}\}
\end{cases}
\end{equation*}
under the natural convention
when $\alpha(\frakP)\in\{0,1\}$,
we take $k=\deg_{Y_{\alpha(\frakP)}}\frakP$ and $f_t\in \frakN $ are arbitrary for $t\in T_{\frakP,\frakG}\setminus\{t_{\frakP,\frakG,1,0},\ldots,t_{\frakP,\frakG,k,\alpha(\frakP)-1}\}$, for each $(\frakP,\frakG)\in \mathcal{A}$. 

Let $\delta_g$ be a derivation on $\mathcal{N}^\cc_{T}$ of the form 
\begin{equation*}
\delta_g(f) =\sum_{t\in T}g_t\frac{\partial f}{\partial \La_t}\quad\hbox{for }f\in \mathcal{N}^\cc_{T_1}
\end{equation*}
for some family $g=(g_t\in \mathcal{N}^\cc_{T}:t\in T)$. Take any family $w=(w_t\in\frakN:t\in T_4)$. Then the mapping 
$\delta:\frakN\to \frakN$ defined by
\begin{equation*}\label{eqdeltastar}
\delta (f)=\sum_{t\in T }g_t\frac{\partial f}{\partial \La_t}+\sum_{t\in T_3}h_t\frac{\partial f}{\partial \La_t}+\sum_{t\in T_4}w_t\frac{\partial f}{\partial \La_t}
\end{equation*}
is a derivation extending $\delta_g$. 
So, by an analogous argument to that for Theorem \ref{maindifferclosed1} we deduce that 

\begin{cor}\label{diffclosedextension}  $(\frakN,\delta )$ is a differentially closed field differentially extending $(\mathcal{N}^\cc_{T},\delta_g)$.
\end{cor}

\begin{proof}
Obviously $(\frakN,\delta)$ is a differential extension of $(\mathcal{N}^\cc_{T},\delta_g)$. Take any $p,q\in \frakN\{y\}$ such that $\ord p>\ord q$ and $q\ne 0$. Then $p^*=\frakP$ and $q^*=\frakG$ for some $(\frakP,\frakG)\in\mathcal{A}$. So, for $f=\La_{t_{\frakP,\frakG,1,0}}$ we have $p(f)=0$ and $q(f)\ne 0$. 
\end{proof}

Assume that $\operatorname{card}T_4=\operatorname{card}\en$. For simplicity one can assume that $T_4=\en$. 
Take a bijection $\sigma :T_4\times \en\to T_4$ and a family $v=(v_t: t\in T_4)$ defined by
$$
v_t=\La_{\sigma(s,k+1)}\quad \hbox{if } t=\sigma(s,k)\in T_4,\; (s,k)\in T_4\times \en.
$$
Let $T_{4,s}=\sigma(\{s\}\times \en)$, $s\in T_4$. Then the sets $T_{4,s}$ are countable, pairwise disjoint, and 
$$
T_4=\bigcup_{s\in T_4}T_{4,s}.
$$ 
Further, the mapping $\delta^{*}:\frakN \to \frakN$ defined by
\begin{equation*}\label{eqsemiuniversal}
\delta^{*}(f)=\sum_{t\in T }g_t\frac{\partial f}{\partial \La_t}+\sum_{t\in T_3}h_t\frac{\partial f}{\partial \La_t}+\sum_{t\in T_4}v_t\frac{\partial f}{\partial \La_t}
\end{equation*}
is a derivation extending $\delta_g$, and by  Corollary \ref{diffclosedextension}, $(\frakN,\delta^{*})$ is a differentially closed field. Moreover, we have

\begin{lem}\label{lem1}
{\rm(a)} For any $s\in T_4$ the mapping $\delta^*$ is a derivation in $\mathcal{N}^\cc_{T\cup T_{4,s}} $.

{\rm(b)} If $(\frakP,\frakG)\in \mathcal{A}$ and $\frakP\in \mathcal{N}^\cc_T[Y]$ then for any $1\le \ell\le \deg_{Y_{\alpha(\frakP)}}\frakP$ the mapping $\delta^*$ is a derivation in $\mathcal{N}^\cc_{T\cup T'} $, where 
$T'=\{t_{\frakP,\frakG,\ell,0},\ldots,t_{\frakP,\frakG,\ell,\alpha(\frakP)-1}\}$.
\end{lem}

\begin{lem}\label{corsemiuniversal} 
Let $(\mathcal{N}^\cc_{T},\delta)$ be a differential extension of a differential field $(\mathcal{L},\delta)$ 
 such that $\mathcal{N}^\cc_T$ is an  algebraic extension of $\mathcal{L}$. 
Let $\mathcal{F}\subset \mathcal{N}^\cc_{T\cup T^\circ}$, with $(T\cup T_2)\cap T^\circ=\emptyset$, be a field such that $(\mathcal{F},\delta_1)$ is a simply generated differential extension of $(\mathcal{L},\delta)$ and let  $c$ be a generator of the extension. 
Assume that $c$ is transcendental over $\mathcal{L}$.

{\rm (a)} If the sequence $(\delta_1^n(c):n\in\en)$  
is algebraically independent\footnote{i.e., $\delta_1^0(c),\ldots,\delta_1^m(c)$ are algebraically independent over $\mathcal{L}$, for any $m\in\en$.} over $\mathcal{L}$,  then for any $s\in T_4$ the mapping $\varPhi:\mathcal{F}\to\mathcal{N}^\cc_{T\cup T_{4,s}}\subset\frakN$ defined by
\begin{equation}\label{eqdefinitionofembedding1}
\varPhi(f)=f\quad\hbox{for }f\in \mathcal{L}\qquad\hbox{and}\qquad \varPhi (\delta^n_1(c))=\La_{\sigma(s,n)}\quad\hbox{for }n\in\en,
\end{equation}
is a differential embedding over $\mathcal{L}$. Moreover, the field  $\mathcal{N}^\cc_{T\cup T_{4,s}}$ is an algebraic extension of $\varPhi(\mathcal{F})$.


{\rm(b)} If 
$\delta_1^0(c),\ldots ,\delta_1^{m-1}(c)$ is the longest sequence algebraically independent over $\mathcal{L}$, 
 then there exists $(\frakP,\frakG)\in\mathcal{A}$ such that $\alpha(\frakP)=m$ 
  and for some open and connected subset $V\subset \cc^{T\cup T^\circ}$,
\begin{align}
&\frakP(\delta_1^0(c),\ldots,\delta_1^{m-1}(c), \delta_1^m(c))=0\quad \hbox{in } V, \label{eqalgzaldelta}\\
&\delta_1^m(c)=h_{\frakP,\frakG,\ell}(\delta _1^0(c),\ldots,\delta_1^{m-1}(c))\quad\hbox{in $V$ for some }1\le \ell\le \deg_{Y_m}\frakP.\label{eqdeltamform}
\end{align}
Moreover,  for any such $(\frakP,\frakG)$,  $V\subset \cc^{T\cup T^\circ}$ and  $1\le \ell\le \deg_{Y_{m}}\frakP$, the mapping $\varPhi_1:\mathcal{F}\to \mathcal{N}^\cc_{T\cup T'}\subset \frakN$ defined by 
\begin{multline}\label{eqdefinitionofembedding2}
\varPhi_1(f)=f\quad\hbox{for }\;f\in\mathcal{L},\quad\varPhi_1(\delta_1^n(c))=\La_{t_{\frakP,\frakG,\ell,n}}\;\hbox{ for }\;0\le n\le m-1,\\
\hbox{and }\; \varPhi_1 (\delta_1^n(c))=(\delta^*)^{n-m}(h_{\frakP,\frakG,\ell})\in\mathcal{N}^\cc_{T\cup T'}\;\hbox{ for }\;n\ge m,
\end{multline}
is a differential embedding over  $\mathcal{L}$, where $T'=\{t_{\frakP,\frakG,\ell,0},\ldots,t_{\frakP,\frakG,\ell,m-1}\}$. In particular the field $\mathcal{N}^\cc_{T\cup T'}$ is an algebraic extension of $\varPhi_1(\mathcal{F})$.
\end{lem}

\begin{proof} 
Let the sequence $(\delta_1^n(c):n\in\en)$ be algebraically independent over $\mathcal{L}$, let $s\in T_4$ and let $\varPhi_1:\mathcal{F}\to\frakN$ be the mapping defined by \eqref{eqdefinitionofembedding1}.  By Lemma \ref{lem1}(a), $\delta^*$ is a derivation in $\mathcal{N}^\cc_{T\cup T_{4,s}}$ and obviously $\varPhi(\mathcal{F})\subset \mathcal{N}^\cc_{T\cup T_{4,s}}$.  Moreover,
$$
\varPhi(\delta_1^n(c))=(\delta^*)^n(\La_{\sigma(s,0)})=(\delta^*)^n(\varPhi(c))\quad\hbox{for }n\in\en,
$$ 
so, $\varPhi$ is  a differential embedding over  $\mathcal{L}$. 
Obviously  $\mathcal{N}^\cc_{T\cup T_{4,s}}$ is an algebraic extension of $\varPhi(\mathcal{F})$, which gives (a).

Assume now that 
$\delta_1^0(c),\ldots ,\delta_1^{m-1}(c)$ is the longest  sequence algebraically independent over $\mathcal{L}$. Then 
 $\delta_1^n(c)\in\mathcal{N}^\cc(W)$, $0\le n \le m$, for some $W\in \WW^\cc_{T\cup T^\circ}$. Let 
\begin{align}
&F:W\ni (\la,x)\mapsto (\la,\delta_1^0(c)(\la,x),\ldots,\delta_1^{m-1}(c)(\la,x))\in\cc^T\times \cc^m,\nonumber
\end{align}
where $\la\in\cc^T$ and $x\in C^{T^\circ}$. 

We claim that there exists an open connected subset $V\subset W$ such that $F(V)\subset \cc^T\times\cc^m$  has nonempty connected interior. Indeed, since $\delta_1^n(c)$, $0\le n\le m-1$, depends on a finite number of variables, it suffices to consider the case when $T\cup T^\circ$ is a finite set. Let $X\subset \cc^T\times \cc^{T^\circ}\times\cc^m$ be the graph of $F$,  let $Y$ be the Zariski closure of $X$, and let $\pi:Y\ni(\la,x,y)\mapsto (\la,y)\in \cc^T\times \cc^m$. Take the ideal $\mathcal{I}\subset \cc[(\La_t;t\in T),(x_{t^\circ}:t^\circ\in T^\circ),(y_0,\ldots,y_{m-1})]$ of polynomials vanishing on $Y$. Since $X$ is the graph of a mapping which components are $\eq$-Nash functions on open connected set $W$, it is a connected complex analytic manifold and so, irreducible analytic subset of $W\times \cc^m$ (see \cite[Corollary 3, p.216]{Lojasiewicz}). So,  by \cite[Proposition 4, p. 217 and Corollary after Proposition 2, p. 408]{Lojasiewicz} the set $Y$ is irreducible and consequently the ideal $\mathcal{I}$ is prime. Moreover, using Gr\"obner bases (see for instance \cite[Theorem 2.4]{Robiano}, \cite[Section 9]{Gianni}, see also \cite{Basu}, \cite{GP}), we obtain that $\mathcal{I}$ is generated by polynomials with rational coefficients, and that the ideal $\mathcal{J}\subset \cc[(\La_t;t\in T),(y_0,\ldots,y_{m-1})]$ of the set $\overline{\pi(Y)}\subset \cc^T\times \cc^m$ is also generated by polynomials with rational coefficients.

By the assumption $\delta_1^0(c),\ldots ,\delta_1^{m-1}(c)$ are algebraically independent over $\mathcal{L}$ and $\mathcal{L}\subset \mathcal{N}^\cc_T$ is an algebraic extension, so they 
 are algebraically independent over $\mathcal{N}^\cc_T$ and in particular -- over $\eq(\La_T)$. So $\La_t$, $t\in T$, and $\delta_1^0(c),\ldots ,\delta_1^{m-1}(c)$ are algebraically independent over $\eq$. Since the ideal $\mathcal{J}$ is generated by polynomials with rational coefficients, \cite[Theorem 1.22]{Basu} gives that $\mathcal{J}=\{0\}$ and the set $\pi(Y)$ is a constructible (i.e., it is in the Boolean algebra generated by the  closed algebraic sets, see \cite{Mumford}) and dense subset of $\cc^T\times \cc^m $. 
 Since $Y$ is an irreducible algebraic set, there exists a proper algebraic subset $Y_0\varsubsetneq Y$ such that $\pi|_{Y\setminus Y_0}:Y\setminus Y_0\to \cc^T\times \cc^m$ is an open mapping (see \cite[Corollary 3.15]{Mumford} and the Riemann Open Mapping Theorem \cite[Theorem V.6.2]{Lojasiewicz}). Consequently $F(W)=\pi(X)$ has nonempty interior because $X\subset Y$ and $X$ contains a nonempty open subset of $Y\setminus Y_0$. This easily gives the announced claim. 

By the assumption, $\delta_1^0(c),\ldots ,\delta_1^{m}(c)$ are algebraically dependent over $\mathcal{L}$. So, there exists 
$(\frakP,\frakG)\in\mathcal{A}$ with $\alpha(\frakP)=m$ and  $\frakP\in\mathcal{L}[Y]\subset  \mathcal{N}^\cc_{T}[Y]$ such that 
$
\frakP(\delta_1^0(c),\ldots,\delta_1^{m-1}(c), \delta_1^m(c))=0.$ 
Take any such $(\frakP,\frakG)$, $t_{\frakP,\frakG,\ell,0},\ldots,t_{\frakP,\frakG,\ell,m-1}\in D_{T_3}(\frakP)\cap D_{T_3}(\frakG)\cap T_{\frakP,\frakG}$, and  $h_{\frakP,\frakG,\ell}\in\frakN$, where $1\le \ell\le k$, $k=\deg_{Y_{m}}\frakP$, for which \eqref{eqordert1} -- 
\eqref{eqallhPQell} hold. Since $\frakP\in \mathcal{N}^\cc_{T}[Y]$, we have $h_{\frakP,\frakG,\ell}\in  \mathcal{N}^\cc_{T\cup T'}$, where $T'=\{t_{\frakP,\frakG,\ell,0},\ldots,t_{\frakP,\frakG,\ell,m-1}\}$. So, $\frakP\in \mathcal{N}^\cc(U)[Y]$ and $h_{\frakP,\frakG,\ell}\in\mathcal{N}^\cc(U)$, $1\le \ell\le k$, for some $U\in \WW^\cc_{T\cup T'}$. By Lemma \ref{lem1}(b) the mapping $\delta^*$ is a derivation in $ \mathcal{N}^\cc_{T\cup T'} $.  

From Proposition \ref{realNash1}, $U$ is an open and dense subset of $\cc^T\times \cc^m$, so by the above claim for some nonempty open connected set $V\subset W$ we have that $F(V)\subset U$ is an open and connected set. Consequently, \eqref{eqalgzaldelta} holds in $V$ and by 
 \eqref{eqallhPQell}, there exists $1\le \ell_0\le k$ such that 
$$
\delta_1^m(c)(\la,x)=h_{\frakP,\frakG,\ell_0}(\la,\delta_1^0(c)(\la,x),\ldots,\delta_1^{m-1}(c)(\la,x))\quad\hbox{for } (\la,x)\in V
$$
and \eqref{eqdeltamform} holds. So, it is easy to observe that the mapping $\varPhi_1:\mathcal{F}\to \frakN$ defined by \eqref{eqdefinitionofembedding2} is an embedding over  $\mathcal{L}$.  By the definition of $\delta^*$ we conclude that $\varPhi_1$ is a differential embedding of $(\mathcal{F},\delta_1)$ in $\mathcal{N}^\cc_{T\cup T'}$ over  $\mathcal{L}$. Furthermore, the homomorphism $\varPhi_1$ transforms the transcendence basis 
$\{\delta_1^0(c),\ldots ,\delta_1^{m-1}(c)\}$ of $\mathcal{F}$ over $\mathcal{L}$ onto the transcendence basis $\{\La_t:t\in T'\}$ of $\mathcal{N}^\cc_{T\cup T'}$ over $\mathcal{N}^\cc_T$. Since $\mathcal{N}^\cc_{T\cup T'}$ is an algebraically closed field and $\mathcal{N}^\cc_T$ is an algebraic extension of $\mathcal{L}$, the field $\mathcal{N}^\cc_{T\cup T'}$ is an algebraic extension of $\varPhi_1(\mathcal{F})$, which gives (b) and completes the proof.
\end{proof}

\begin{twr}\label{coruniversal} 
$(\frakN,\delta^{*})$ is a universal extension of $(\mathcal{N}^\cc_{T},\delta_g)$.
\end{twr}

\begin{proof} We claim that $(\frakN,\delta^{*})$ is a semiuniversal extension of $(\mathcal{N}^\cc_{T},\delta_g)$. Take any finitely generated differential extension $(\mathcal{F},\delta_1)$ of $(\mathcal{N}^\cc_{T},\delta_g)$  and let  $\{c_1,\ldots,c_N\}$ be the set of generators of the extension. 
Obviously $(\mathcal{F},\delta_1)$ is equal to the quotient field of the differential domain $\mathcal{N}^\cc_{T}[\delta_1^n(c_j):n\in\en,\;1\le j\le N]$ with derivation $\delta_1$.

Let $\mathcal{F}_\nu$ be the quotient field of the domain $\mathcal{N}^\cc_{T}[\delta_1^n(c_j):n\in\en,\;1\le j\le \nu]$, $0\le \nu\le N$. Then $\mathcal{F}_0=\mathcal{N}^\cc_{T}$ and $(\mathcal{F}_{\nu+1},\delta_1)$ is a simply generated extension of $(\mathcal{F}_\nu,\delta_1)$ for $1\le \nu\le N-1$. By  \cite[Proposition II.2.3]{Kolchin} one can assume that $c_{\nu+1}$ is transcendental over $\mathcal{F}_\nu$. If the sequence $(\delta_1^n(c_1):n\in\en)$ is algebraically independent over $\mathbb{N}^\cc_T$ then by Lemma \ref{corsemiuniversal}(a) for any $s\in T_4$ the mapping $\varPhi:\mathcal{F}_1\to\mathcal{N}^\cc_{T\cup T_{4,s}}\subset \frakN$ defined by \eqref{eqdefinitionofembedding1} with $c=c_1$ is a differential embedding over $\mathcal{N}^\cc_T$ and $\mathcal{N}^\cc_{T\cup T_{4,s}}$ is an algebraic extension of $\varPhi(\mathcal{F}_1)$. Therefore, using several times Lemma \ref{corsemiuniversal}(a), we may assume that for any $1\le j\le N$ there exists $m_j\in\en$ such that $\delta_1^0(c_j),\ldots,\delta_1^{m_j-1}(c_j)$ is the longest  sequence algebraically independent over $\mathcal{F}_{j-1}$. By Proposition  \ref{differentialdiffeo} we may assume that $\mathcal{F}_j\subset\mathcal{N}^\cc_{T\cup T^\circ_j}\subset \mathcal{N}^\cc_{T\cup T^\circ}$ for some finite sets $
T^\circ_1\varsubsetneq \ldots\varsubsetneq T^\circ_N=T^\circ$ with  $(T\cup T_2)\cap T^\circ=\emptyset$, such that $\mathcal{N}^\cc_{T\cup T^\circ_j}$ is an algebraic extension of $\mathcal{F}_j$. Then $\delta_1^n(c_j)\in\mathcal{N}^\cc(W)$, $0\le n \le m_j$, for some $W\in \WW^\cc_{T\cup T^\circ}$.

Since $c_1$ is transcendental over $\mathcal{N}^\cc_T$, by Lemma \ref{corsemiuniversal}(b) there exist $(\frakP,\frakG)\in\mathcal{A}$ with $\alpha(\frakP)=m_1$, there exists $1\le \ell\le \deg_{Y_{m_1}}\!\frakP$, $T_1'=\{t_{\frakP,\frakG,\ell,0},\ldots,t_{\frakP,\frakG,\ell,m_1-1}\}$ and $h_{\frakP,\frakG,\ell}\in \mathcal{N}^\cc_{T\cup T'_1}$ such that $\varPhi_1:\mathcal{F}_1\to \frakN$ defined by \eqref{eqdefinitionofembedding2} with $c=c_1$ is a differential embedding over $\mathcal{N}^\cc_T$ and $\mathcal{N}^\cc_{T\cup T'_1}$ is an algebraic extension of $\varPhi(\mathcal{F}_1)$. So, we may assume that $\mathcal{F}_1\subset \mathcal{N}^\cc_{T\cup T'_1}$ is an algebraic extension. Then $c_2$ is transcendental over $\mathcal{F}_1$ and we may repeat the above argument with $c_2$ and the extension $\mathcal{F}_1\subset \mathcal{F}_2$. By applying Lemma \ref{corsemiuniversal}(b) $N$ times we find that $(\mathcal{F},\delta_1)=(\mathcal{F}_N,\delta_1)$ differentially embeds over $\mathcal{N}^\cc_{T}$ in $(\mathcal{N}^\cc_{T\cup T'},\delta^*)$ for some finite set $T'\subset T_3$. 
These iterations of Lemma \ref{corsemiuniversal}  are possible, because for a fixed $\frakP_0\in\frakN[Y]$, $\alpha(\frakP_0)\ge 0$, there are infinitely many $\frakG\in\frakN[Y]$ such that $(\frakP_0, \frakG) \in \mathcal{A}$, and so the family of sets $\{T_ {\frakP_0,\frakG}: (\frakP_0, \frakG) \in \mathcal{A}\}$  is infinite and for any $\frakP$, $\frakG$ we have defined all roots $h_{\frakP,\frakG,\ell}$  of $\frakP(\La_{t_{\frakP,\frakG,1,0}},\ldots,\La_{t_{\frakP,\frakG,1,\alpha(\frakP)-1}},Y_{\alpha(\frakP)})=0$, so we can choose appropriate $\ell$ for which \eqref{eqdeltamform} holds.  Summing up, $(\frakN,\delta^{*})$ is a semiuniversal extension of $(\mathcal{N}^\cc_{T},\delta_g)$.

To complete the proof it suffices to prove that for any finitely generated differential extension $(\mathcal{F},\delta^*)$ of $(\mathcal{N}^\cc_{T},\delta_g)$ in  $(\frakN,\delta^{*})$, the field  $(\frakN,\delta^{*})$ is a semiuniversal extension of $(\mathcal{F},\delta^*)$. Indeed, by the above, there are a finite set $T'\subset T_3$ and a finite union $T''=T_{4,s'_1}\cup \cdots\cup T_{4,s'_\mu}\subset T_4$ such that $\mathcal{N}^\cc_{T\cup T'\cup T''}$ is an algebraic extension of  $\mathcal{F}$. 
Let $(\mathcal{G},\delta_1)$ be a finitely generated differential extension of $(\mathcal{F},\delta^*)$. 
Then, an analogous argument as in the first two paragraphs of the proof, but using $(\mathcal{G},\delta_1)$ in place of $(\mathcal{F},\delta^*)$ and  $(\mathcal{F},\delta^*)$ in place of $(\mathcal{N}^\cc_{T},\delta_g)$, gives that  $(\mathcal{G},\delta_1)$ can be differentially embedded in $(\frakN,\delta^{*})$ over  $(\mathcal{F},\delta^*)$. This completes the proof. %
\end{proof}

\begin{remark}\label{remuniversal}
If  $\mathcal{L}=\eq$  then $(\frakN,\delta^{*})$ is the universal differential field.
\end{remark}

\section{An Archimedean ordered differentially closed field}\label{constrorderedalgebraically}

\subsection{A geometric model of an arbitrary ordered differential field}\label{arbitraryordereddifffield}

In \cite{Sppjm} we proved that there exists a one-to-one  correspondence between the family of orderings in $\eq(\La_T)$ and the family of plain filters (see \cite[Theorem 5.2, Proposition 2.4 and Corollary 2.5]{Sppjm}, cf. \cite{Brocker}). By a \emph{plain filter} we mean a $c$-filter $\varOmega$ of subsets of $\er^T$ defined by:

\smallskip
1) any $U\in{\varOmega}$ is a connected component of the complement of a proper $\eq$-algebraic set $V\subset \er^T$,

2) for any proper $\eq$-algebraic set $V\subset \er^T$, some connected component $U$ of the complement of $V$ belongs to $\varOmega$.

\smallskip

The above mentioned correspondence  is as follows:

\begin{fact}\label{correspondenceorderingplainfilter}
For any ordering $\succ$ of $\eq(\La_T)$ there exists a unique plain filter $\varOmega$ such that $f\succ 0$ iff $f>0$ on some $U\in\varOmega$. 
Conversely, any plain filter $\varOmega$ determines a  unique ordering $\succ$ of $\eq(\La_T)$ in the above way. 
\end{fact}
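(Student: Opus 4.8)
The plan is to establish the bijection in Fact~\ref{correspondenceorderingplainfilter} by going through the field $\mathcal{N}^\er_\varOmega$ of real Nash functions, which by Proposition~\ref{realandalgebraicclos}(a) is a real closure of $\eq(\La_\Deltaa)$. First I would recall that any ordering of $\eq(\La_\Deltaa)$ extends to an ordering of a real closure, and that conversely restriction of an ordering on a real closure gives an ordering on $\eq(\La_\Deltaa)$; moreover, a real closure of an ordered field is unique up to order isomorphism. So it suffices to set up the correspondence between plain filters and the ordering $\succ_\varOmega$ that $\varOmega$ determines on $\mathcal{N}^\er_\varOmega$, and to check it is compatible with restriction to $\eq(\La_\Deltaa)$. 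For the construction of the plain filter from an ordering $\succ$ of $\eq(\La_\Deltaa)$, given $P\in\eq[\La_\Deltaa]$ with $P\ne0$, I would let $V=P^{-1}(0)$ and show there is exactly one connected component $U_P$ of $\er^\Deltaa\setminus V$ on which $\operatorname{sign}$ of every $Q\in\eq[\La_\Deltaa]$ agrees with $\operatorname{sign}_\succ Q$ whenever $Q$ does not vanish on $U_P$; the key tool is the Tarski--Seidenberg transfer principle applied in the finitely many variables actually occurring, together with Proposition~\ref{semi1}(e), which guarantees connected components are $\eq$-semialgebraic. Then $\varOmega:=\{U_P:P\in\eq[\La_\Deltaa]\setminus\{0\}\}$; condition (1) is built in, and for condition (2) one takes $U_{PQ}\subset U_P\cap U_Q$ after showing $U_{PQ}$ lands inside both, exactly as in the $R_1$-type argument of Proposition~\ref{realNash1}.

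Next I would verify that this $\varOmega$ is genuinely a $c$-filter (so that the machinery of Proposition~\ref{realandalgebraicclos}(a) applies): openness and connectedness come from (1); property (ii) of $c$-filters follows because if $V'\subsetneq\er^\Deltaa$ is $\eq$-algebraic, say $V'\subset P'^{-1}(0)$ with $P'\ne0$, then $U_{P'}\cap V'=\emptyset$; property (iii) is condition (2). Having that, $\varOmega$ determines an ordering $\succ_\varOmega$ on $\mathcal{N}^\er_\varOmega$, and restricting it to the subfield $\eq(\La_\Deltaa)$ recovers $\succ$: indeed for $f=P/Q\in\eq(\La_\Deltaa)$ we have $f\succ 0$ iff $PQ\succ 0$ iff $PQ>0$ on $U_{PQ}$ iff $f>0$ on $U_{PQ}\in\varOmega$. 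Uniqueness of the plain filter: if $\varOmega_1,\varOmega_2$ both induce $\succ$, then for any $U_1\in\varOmega_1$, writing $U_1$ as a component of the complement of $P^{-1}(0)$, the sign pattern of $\eq$-polynomials on $U_1$ is forced by $\succ$, hence equals the sign pattern on the corresponding $U_P\in\varOmega_2$; since a connected component of $\er^\Deltaa\setminus P^{-1}(0)$ is determined by the signs of finitely many polynomials (again Tarski--Seidenberg plus cylindrical decomposition, as in Proposition~\ref{simplyconnected}), we get $U_1=U_P$, so $\varOmega_1\subset\varOmega_2$ and by symmetry $\varOmega_1=\varOmega_2$.

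For the converse direction, given a plain filter $\varOmega$ I would first check it is a $c$-filter (same verification as above) so that Proposition~\ref{realandalgebraicclos}(a) yields an ordering $\succ_\varOmega$ on $\mathcal{N}^\er_\varOmega$; its restriction to $\eq(\La_\Deltaa)$ is the sought ordering $\succ$, defined by $f\succ 0$ iff $f>0$ on some $U\in\varOmega$. One must check this is well defined and total: well-definedness uses condition (2) (two witnessing sets have a common refinement in $\varOmega$), and totality uses that for $f=P/Q$ the set $U_{PQ}$ lies entirely in the region $PQ>0$ or entirely in $PQ<0$, because $PQ$ does not change sign on the connected set $U_{PQ}\setminus (PQ)^{-1}(0)$ and $U_{PQ}$ meets the complement of $(PQ)^{-1}(0)$. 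The compatibility (that starting from this $\succ$ and running the first construction returns the same $\varOmega$) then follows from the uniqueness already proved.

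The main obstacle I expect is the bookkeeping around infinite-dimensionality: every $\eq$-algebraic or $\eq$-semialgebraic set in $\er^\Deltaa$ depends on only finitely many coordinates, so all the "sign pattern determines the component" arguments must be phrased via the projections $\pi_{\deltaa_1,\ldots,\deltaa_m}$ and the finite-dimensional Tarski--Seidenberg theorem, then pulled back; the delicate point is ensuring the component $U_P$ chosen in $\er^\Deltaa$ is the full cylinder over a single component in the relevant $\er^m$, which is where Proposition~\ref{semi1}(e) and the compatibility of connected components with the projection $\pi_{\deltaa_1,\ldots,\deltaa_m}$ are essential. Since this is precisely the content of \cite[Theorem 5.2, Proposition 2.4, Corollary 2.5]{Sppjm} cited in the paragraph, I would at this point simply invoke those results rather than reprove them, and present Fact~\ref{correspondenceorderingplainfilter} as the extraction of the ordering-versus-plain-filter correspondence from that machinery.
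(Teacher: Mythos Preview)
The paper does not prove Fact~\ref{correspondenceorderingplainfilter} at all: it is stated as a fact with no proof, and the justification is entirely contained in the sentence preceding it, which cites \cite[Theorem 5.2, Proposition 2.4 and Corollary 2.5]{Sppjm} (and cf.\ \cite{Brocker}). Your proposal is therefore considerably more than what the paper itself supplies, and since you conclude by saying you would ``simply invoke those results rather than reprove them,'' your final position coincides exactly with the paper's: treat this as an imported result from \cite{Sppjm}.

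That said, your sketch of how the correspondence goes is broadly reasonable, with one caveat worth flagging. The detour through $\mathcal{N}^\er_\varOmega$ and Proposition~\ref{realandalgebraicclos}(a) is circular as a foundation here: that proposition already presupposes a $c$-filter $\varOmega$ and builds the ordering $\succ_\varOmega$ from it, so you cannot use it to \emph{construct} the plain filter starting from an abstract ordering $\succ$ of $\eq(\La_\Deltaa)$. The direct route (which is what \cite{Sppjm} does, and what the second half of your sketch in fact describes) is purely at the level of $\eq(\La_\Deltaa)$: given $\succ$, for each nonzero $P\in\eq[\La_\Deltaa]$ pick the component of $\er^\Deltaa\setminus P^{-1}(0)$ whose finite-dimensional sign pattern matches $\succ$, via Tarski--Seidenberg transfer in the finitely many variables involved. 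The real-closure machinery is a consequence of the correspondence, not an ingredient in its proof. If you present this fact in your own write-up, either cite \cite{Sppjm} outright (as the paper does) or give the direct sign-pattern argument without routing through $\mathcal{N}^\er_\varOmega$.
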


Since any ordering in $\mathcal{N}^\er_{\varOmega}$ is uniquely determined by an ordering in $\eq(\La_T)$, from the above fact we obtain (cf. Theorem \ref{charactdiffclosedfield} for differentially closed fields)

\begin{cor}\label{charactdiffclosedfieldreal}
Let $(K,\delta)$ be an ordered differentially closed field. Then there exists an infinite set $T$ such that $(K,\delta)$ is $\eq$-differentially order isomorphic to $(\mathcal N^\er_{\varOmega},\delta_g)$ for some $c$-filter $\varOmega$ 
 in $\er^T$ and some family 
\begin{equation}\label{eqformfamilygreal}
g=(g_t\in \mathcal N^\er_{\varOmega}: t\in T).
\end{equation}
\end{cor}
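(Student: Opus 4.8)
The plan is to mirror the argument used for Corollary~\ref{charactdiffclosedfield}, replacing the algebraically-closed input with an ordered real-closed one. First I would let $T$ be a transcendence basis of $K$ over $\eq$; by Corollary~\ref{orderedtranscendence} the set $T$ is infinite, so $\Deltaa:=T$ has the required property. The field $(K,\delta)$ comes equipped with an ordering $\succ_K$; restricting it gives an ordering of the subfield $\eq(\La_T)\subset K$ (identifying the elements of $T$ with the variables $\La_t$), and by Fact~\ref{correspondenceorderingplainfilter} this ordering is induced by a unique plain filter $\varOmega$ of subsets of $\er^T$. In particular $\varOmega$ is a $c$-filter of $\eq$-semialgebraic sets in $\er^\Deltaa$.

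Next I would invoke Proposition~\ref{realandalgebraicclos}(a): since $K$ is real closed (being ordered differentially closed) and has transcendence basis $T$ over $\eq$, the field $K$ is a real closure of $\eq(\La_T)$, hence order-isomorphic over $\eq$ to the real closure $\mathcal N^\er_{\varOmega}$ of $\eq(\La_T)$ carrying the ordering $\succ_\varOmega$ determined by $\varOmega$. The key point here is uniqueness of real closures together with the fact that $\varOmega$ was chosen precisely so that $\succ_\varOmega$ extends the ordering $\succ_K$ restricted to $\eq(\La_T)$; since an ordering of a real-closed algebraic extension of an ordered field is uniquely determined by its restriction to that field, the $\eq$-isomorphism $K\to\mathcal N^\er_{\varOmega}$ is automatically order preserving. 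Call this order isomorphism $\varphi$.

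Finally I would transport the derivation: applying Proposition~\ref{differentialdiffeo} with this $\varphi$ (and $\K=\er$) produces a derivation $\delta_\varphi$ on $\mathcal N^\er_{\varOmega}$ making $\varphi$ a $\eq$-differential isomorphism; since $\varphi$ is also order preserving, it is a $\eq$-differential order isomorphism of $(K,\delta)$ onto $(\mathcal N^\er_{\varOmega},\delta_\varphi)$. By Proposition~\ref{derywacjenaNash1}, every derivation on $\mathcal N^\er_{\varOmega}$ has the form $\delta_g$ for some family $g=(g_t\in\mathcal N^\er_{\varOmega}:t\in\Deltaa)$ as in \eqref{eqformfamilygreal}, so in particular $\delta_\varphi=\delta_g$ for a suitable $g$, which gives the assertion. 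The only subtle step — and the one I would state most carefully — is the compatibility of orderings in the second paragraph: one must make sure that the abstract isomorphism of real closures supplied by Proposition~\ref{realandalgebraicclos}(a) really does respect $\succ_K$, which is where Fact~\ref{correspondenceorderingplainfilter} and the uniqueness of the ordering of a real closure are used.
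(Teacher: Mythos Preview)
Your proof is correct and follows essentially the same route as the paper: choose a transcendence basis $T$, note it is infinite, identify $K$ as a real closure of $\eq(\La_T)$ and hence order-isomorphic to some $\mathcal N^\er_{\varOmega}$, then transport the derivation via Propositions~\ref{differentialdiffeo} and~\ref{derywacjenaNash1}. The only cosmetic differences are that you cite Corollary~\ref{orderedtranscendence} rather than Proposition~\ref{differentiallyclosedtranscendence} for the infinitude of $T$, and you spell out explicitly (via Fact~\ref{correspondenceorderingplainfilter} and uniqueness of the ordering on a real closure) why the isomorphism is order-preserving, a point the paper leaves implicit.
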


\begin{proof} Let $T$ be the transcendence basis of $K$ over $\eq$. By Corollary \ref{orderedtranscendence}, $T$ is an infinite set. Since $K$ is a real closed field, being ordered and differentially closed,  Proposition \ref{realandalgebraicclos}(a) shows that $K$ is $\eq$-order isomorphic to $\mathcal N^\er_{\varOmega}$ for some plain filter $\varOmega$ of subsets of $\er^T$. Then, by Propositions \ref{derywacjenaNash1} and \ref{differentialdiffeo} we see that $(K,\delta)$ is $\eq$-differentially order isomorphic to ($\mathcal N^\cc_{\varOmega},\delta_g)$ for some family $g$ of the form \eqref{eqformfamilygreal}.
\end{proof}

\subsection{A derivation which makes an Archimedean Nash field ordered  differentially closed}\label{Archimedeandifferentiallyclosed}

Let $T\subset \er$ be an infinite set algebraically independent over~$\eq$ ordered by the usual ordering $>$ on $\er$. Let $\varOmega=\varOmega^\er_{{\bf x}_0}$ be the $c$-filter of subsets of $\er^T$ centered at ${\bf x}_0\in\er^T$, defined by \eqref{defarchordcenbtered} in Section \ref{orderingsineqlaDeltaa}. 
 Set 
 $$
 \frakN=\mathcal N^\er_{{\bf x}_0}.
 $$
By Theorem \ref{Corollary2}, the field $\frakN$ is Archimedean, where the ordering $\succ$ in $\frakN$ is described by $f \succ 0$ iff $f({\bf x}_0)>0$. Set
$$
\frakN_{{\bf x}_0}=\{f({\bf x}_0): f\in \frakN\}.
$$

\begin{remark}\label{remarkseries}
By the definition of the $c$-filter $\varOmega^\er_{{\bf x}_0}$, each $f\in \frakN$ is a real analytic function in a neighbourhood of ${\bf x}_0$, or more precisely, $f$ is a germ of real analytic function at ${\bf x}_0$. Consequently, one can consider the elements $f$ as sums of power series centered at ${\bf x}_0$ in a finite number of variables.
\end{remark}

By Corollary \ref{specialordering} we have

\begin{fact}\label{isomorphismKN}
$\frakN_{{\bf x}_0}$ is a real closed field order  isomorphic to $\frakN$.
\end{fact}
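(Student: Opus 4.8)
The plan is to exhibit the evaluation map $\operatorname{ev}_{\bf x_0}:\frakN\ni f\mapsto f({\bf x_0})\in\er$ as an order-preserving ring homomorphism whose image is exactly $\frakN_{\bf x_0}$, and then to argue that this map is injective. Injectivity follows from Remark \ref{remarkseries}: a nonzero $f\in\frakN$ is represented by a nonzero analytic germ at ${\bf x_0}$, and by the identity theorem (together with the fact that each representative depends on only finitely many variables, so we are genuinely in a finite-dimensional polydisc) such a germ cannot vanish on a whole neighbourhood of ${\bf x_0}$; in particular one may perturb ${\bf x_0}$ slightly inside any $U\in\varOmega^\er_{\bf x_0}$ to a point where $f$ is nonzero, but since $f\succ 0$ or $f\prec 0$ is decided by the sign of $f$ on all of some $U$ (Proposition \ref{realandalgebraicclos}(a) and the description of $\succ$), and that sign agrees with $\operatorname{sgn} f({\bf x_0})$ by continuity, we get that $f\ne 0$ implies $f({\bf x_0})\ne 0$. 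Thus $\operatorname{ev}_{\bf x_0}$ is an injective ring homomorphism onto $\frakN_{\bf x_0}$, and by definition of the operations on $\frakN$ and the pointwise operations on $\frakN_{\bf x_0}$ it is a ring isomorphism.

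The order-compatibility is immediate once injectivity is in hand: by the displayed description of $\succ$ in this section, $f\succ 0$ iff $f({\bf x_0})>0$, which is precisely the statement that $\operatorname{ev}_{\bf x_0}$ is order preserving and order reflecting; transporting the ordering $\succ$ of $\frakN$ through the isomorphism yields the usual ordering of $\frakN_{\bf x_0}$ inherited from $\er$. Hence $\frakN_{\bf x_0}$, as an ordered field, is order isomorphic to $(\frakN,\succ)$.

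It remains to note that $\frakN_{\bf x_0}$ is real closed. But this is inherited through the isomorphism from $\frakN$: by Corollary \ref{specialordering} the field $\frakN=\mathcal N^\er_{\bf x_0}$ is an Archimedean real closed field (being the real closure of $\eq(\La_t:t\in T)$), and real-closedness is a first-order property preserved under (ordered) field isomorphism. Alternatively, Corollary \ref{specialordering} already asserts that $f\mapsto f({\bf x_0})$ is an order-preserving monomorphism $\frakN\to\er$, so $\frakN_{\bf x_0}$ is its image, hence an isomorphic copy of the real closed field $\frakN$.

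The only genuine point requiring care — the ``main obstacle'' — is the injectivity of $\operatorname{ev}_{\bf x_0}$, i.e.\ ruling out a nonzero Nash germ at ${\bf x_0}$ that vanishes at ${\bf x_0}$ in a way incompatible with its sign on a set $U\in\varOmega$; this is handled by combining Remark \ref{remarkseries} (analyticity near ${\bf x_0}$, finitely many variables) with the definition of $\succ_\varOmega$ and continuity, and it is essentially the content of Corollary \ref{specialordering}, which we are entitled to invoke. Everything else is bookkeeping.
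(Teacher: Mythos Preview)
Your proposal is correct and ultimately coincides with the paper's approach: the paper's entire ``proof'' is the one-line invocation of Corollary~\ref{specialordering}, which you yourself note (in your ``Alternatively\dots'' clause) already gives that $f\mapsto f({\bf x_0})$ is an order-preserving monomorphism $\frakN\to\er$ with image $\frakN_{\bf x_0}$. Your longer discussion of injectivity via analytic germs is not wrong but is an unnecessary detour, since the key fact ---~that $f\succ 0$ iff $f({\bf x_0})>0$~--- is stated in the text immediately before Fact~\ref{isomorphismKN} and directly yields $f\ne 0\Rightarrow f({\bf x_0})\ne 0$ without any appeal to the identity theorem; the real reason is simply that ${\bf x_0}\in U$ for every $U\in\varOmega^\er_{\bf x_0}$ (the $c$-filter is centered), so a strict sign on $U$ forces the same strict sign at ${\bf x_0}$.
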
 

We will adopt the notation of Section \ref{constrdiimakesdifferentially}. Consider the ring of polynomials
$$
\frakN[Y]=\frakN 
[Y_{j}:j\in\en ].
$$
For a polynomial $\frakR\in \frakN[Y]$ of the form 
$$
\frakR(Y_0,\ldots,Y_k)=\sum_{j_0,\ldots,j_k\ge 0}f_{j_0,\ldots,j_k}Y_0^{j_0}\cdots Y_k^{j_k}
$$
where $f_{i_0,\ldots,i_k}\in\frakN$ for all $i_0,\ldots i_k$, we denote by $\frakR_{{\bf x}_0}$  the polynomial in $\frakN_{{\bf x}_0}[Y]$ defined by
$$
\frakR_{{\bf x}_0}(Y_0,\ldots,Y_k)=\sum_{j_0,\ldots,j_k\ge 0}f_{j_0,\ldots,j_k}({\bf x}_0)Y_0^{j_0}\cdots Y_k^{j_k}.
$$

Consider the  sets 
$$
\mathcal{B}_{k,n}=\{(\frakP,\frakG_1,\ldots,\frakG_n)\in \frakN[Y]^{n+1}:
k=\alpha(\frakP)\ge \alpha(\frakG_s)\ge -1,\;s=1,\ldots,n\}
$$
for $k,n\in\en$, and let
\begin{multline}\label{eqq1qn}
\mathcal{Z}=\bigcup_{k,n=1}^\infty \bigg\{(\frakP,\frakG_1,\ldots,\frakG_n,f_0,\ldots, f_k)\in \mathcal{B}_{k,n}\times \frakN^{k+1}:\frakP(f_0,\ldots,f_k)=0,\\ 
\frac{\partial \frakP}{\partial x_k}(f_0,\ldots,f_k)\ne 0,\; 
\frakG_s(f_0,\ldots,f_k)\succ 0,\;s=1,\ldots,n\bigg\}.
\end{multline}
We immediately obtain the following fact (cf. Fact \ref{factdiff2}).

\begin{fact}\label{factdiff2arch}
The sets $T$, $\frakN$, $\frakN[Y]$ and $\mathcal{Z}$ have the same cardinality.
\end{fact}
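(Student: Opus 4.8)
The plan is to run the same cardinality bookkeeping as in the proof of Fact \ref{factdiff2}, adding only the treatment of the new set $\mathcal{Z}$. First I would record that, since $T$ is infinite, the rational function field $\eq(\La_t:t\in T)$ has cardinality $\operatorname{card}T$; by Corollary \ref{specialordering} the field $\frakN=\mathcal{N}^\er_{{\bf x_0}}$ is the real closure of $\eq(\La_t:t\in T)$, hence an algebraic — in particular cardinality-preserving — extension, so $\operatorname{card}\frakN=\operatorname{card}T$. Next, $\frakN[Y]=\frakN[Y_j:j\in\en]$ is a polynomial ring in countably many variables over an infinite field: each element is determined by a finite set of monomials (finitely supported exponent vectors from a countable set) together with their coefficients in $\frakN$, which gives $\operatorname{card}\frakN[Y]=\operatorname{card}\frakN=\operatorname{card}T$. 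This already settles that $T$, $\frakN$ and $\frakN[Y]$ have the same cardinality.

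For $\mathcal{Z}$ I would obtain the upper bound from the inclusion
$$
\mathcal{Z}\subseteq\bigcup_{k,n\ge 1}\frakN[Y]^{\,n+1}\times\frakN^{\,k+1},
$$
a countable union of sets, each of cardinality $\operatorname{card}(\frakN[Y])^{n+1}\cdot\operatorname{card}(\frakN)^{k+1}=\operatorname{card}T$, whence $\operatorname{card}\mathcal{Z}\le\aleph_0\cdot\operatorname{card}T=\operatorname{card}T$. For the matching lower bound I would exhibit an injection of $\frakN$ into $\mathcal{Z}$: for $a\in\frakN$ take $\frakP=Y_1-a$ and $\frakG_1=1$, so that $(\frakP,\frakG_1)\in\mathcal{B}_{1,1}$ (indeed $\alpha(\frakP)=1\ge\alpha(\frakG_1)=-1$), and check that the tuple $(Y_1-a,1,0,a)$ satisfies $\frakP(0,a)=0$, $\frac{\partial\frakP}{\partial x_1}(0,a)=1\ne 0$ and $\frakG_1(0,a)=1\succ 0$; hence it lies in the $k=n=1$ piece of $\mathcal{Z}$, and $a\mapsto(Y_1-a,1,0,a)$ is plainly injective. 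Therefore $\operatorname{card}\mathcal{Z}\ge\operatorname{card}\frakN=\operatorname{card}T$, and the Cantor–Schröder–Bernstein theorem yields $\operatorname{card}\mathcal{Z}=\operatorname{card}T$, which together with the previous paragraph gives the assertion.

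There is no genuine obstacle here: the whole argument is a routine transfinite-arithmetic count. The only points that deserve an explicit line of verification are that passing to the real closure does not change cardinality (it is algebraic over $\eq(\La_t:t\in T)$) and that the chosen witness tuple really meets every clause in the definition of $\mathcal{Z}$ — in particular that $\frakG_1=1$ is a legitimate, nonzero element of $\frakN[Y]$ with $\alpha(\frakG_1)=-1\le\alpha(\frakP)$, so that $(\frakP,\frakG_1)\in\mathcal{B}_{1,1}$.
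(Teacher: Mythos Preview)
Your argument is correct and supplies precisely the routine cardinality bookkeeping that the paper leaves implicit: the paper gives no proof at all, merely recording the fact as immediate with a cross-reference to Fact~\ref{factdiff2}. Your treatment of $\mathcal{Z}$ via the upper bound from the countable union and the explicit injection $a\mapsto(Y_1-a,1,0,a)$ is a clean way to fill in the only step not already covered by Fact~\ref{factdiff2}.
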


\begin{fact}\label{factdiff3arch}
There exists a family of  pairwise disjoint infinite and countable subsets $T_{\frakP,\frakG_1,\ldots,\frakG_n,f_0,\ldots, f_k}\subset T$, $(\frakP,\frakG_1,\ldots,\frakG_n,f_0,\ldots, f_k)\in\mathcal{Z}$, such that
$$
T=\bigcup_{(\frakP,\frakG_1,\ldots,\frakG_n,f_0,\ldots, f_k)\in \mathcal{Z}}T_{\frakP,\frakG_1,\ldots,\frakG_n,f_0,\ldots, f_k}.
$$
\end{fact}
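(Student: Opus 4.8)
The plan is to mirror the proof of Fact~\ref{factdiff3} essentially verbatim, the only difference being the bookkeeping index set $\mathcal{Z}$ in place of $\mathcal{A}$. First I would invoke Fact~\ref{factdiff2arch} to get a bijection $\eta:\mathcal{Z}\to \Deltaa$; since $\Deltaa$ is infinite there is also a bijection $\tau:\en\times\Deltaa\to\Deltaa$ (here one uses that for an infinite set $\Deltaa$, $\operatorname{card}(\en\times\Deltaa)=\operatorname{card}\Deltaa$, which is where the Kuratowski--Zorn Lemma enters, as already assumed in the paper). Then for each $z=(\frakP,\frakG_1,\ldots,\frakG_n,f_0,\ldots,f_k)\in\mathcal{Z}$ I would set
$$
\Deltaa_{\frakP,\frakG_1,\ldots,\frakG_n,f_0,\ldots,f_k}=\tau\bigl(\en\times\{\eta(z)\}\bigr)\subset\Deltaa.
$$

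Each such set is the image under the injection $\tau$ of the countably infinite set $\en\times\{\eta(z)\}$, hence is itself countably infinite. The sets $\en\times\{\eta(z)\}$, $z\in\mathcal{Z}$, are pairwise disjoint because $\eta$ is injective, and since $\tau$ is injective the images $\Deltaa_{\frakP,\frakG_1,\ldots,\frakG_n,f_0,\ldots,f_k}$ remain pairwise disjoint. Finally, because $\tau$ is surjective and $\en\times\Deltaa=\bigcup_{z\in\mathcal{Z}}\bigl(\en\times\{\eta(z)\}\bigr)$ (using surjectivity of $\eta$), taking images gives $\Deltaa=\bigcup_{z\in\mathcal{Z}}\Deltaa_{\frakP,\frakG_1,\ldots,\frakG_n,f_0,\ldots,f_k}$, which is the asserted decomposition.

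There is essentially no obstacle here: the entire content is the cardinal arithmetic fact that an infinite set can be partitioned into $\operatorname{card}\Deltaa$-many countably infinite pieces, together with Fact~\ref{factdiff2arch} identifying $\operatorname{card}\mathcal{Z}$ with $\operatorname{card}\Deltaa$. The only point requiring a modicum of care is confirming $\mathcal{Z}\ne\emptyset$ so that the decomposition is non-trivial and the index set is genuinely of cardinality $\operatorname{card}\Deltaa$; this follows from Fact~\ref{factdiff2arch} itself, which already asserts $\operatorname{card}\mathcal{Z}=\operatorname{card}T=\operatorname{card}\Deltaa$ and in particular that $\mathcal{Z}$ is infinite. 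So the proof is a one-line adaptation of the proof of Fact~\ref{factdiff3}, and I would write it as such.
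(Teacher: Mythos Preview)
Your proof is correct and follows essentially the same approach as the paper's: both invoke Fact~\ref{factdiff2arch} to obtain a bijection $\eta:\mathcal{Z}\to\Deltaa$, use infiniteness of $\Deltaa$ to get a bijection $\tau:\en\times\Deltaa\to\Deltaa$, and define $\Deltaa_{\frakP,\frakG_1,\ldots,\frakG_n,f_0,\ldots,f_k}=\tau(\en\times\{\eta(z)\})$. Your additional remarks (verifying disjointness, countable infiniteness, and the union) merely spell out what the paper leaves implicit.
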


\begin{proof}
Since  $T$ is infinite, there exists a bijection $\tau:\en\times T\to T$. By Fact \ref{factdiff2arch} there exists a bijection $\eta:\mathcal{Z}\to T$. Thus setting
$$
T_{\frakP,\frakG_1,\ldots,\frakG_n,f_0,\ldots, f_k}=\tau(\en\times\{\eta(\frakP,\frakG_1,\ldots,\frakG_n,f_0,\ldots, f_k)\})\subset T
$$
for $(\frakP,\frakG_1,\ldots,\frakG_n,f_0,\ldots, f_k)\in\mathcal{Z}$, we obtain the assertion.
\end{proof}

\begin{prop}\label{factdiff4real}
Let $z=(\frakP,\frakG_1,\ldots,\frakG_n,f_0,\ldots, f_k)\in\mathcal{Z}$, $k=\alpha(\frakP)$. For any 
\begin{equation}\label{pointalwaysexist}
t_{z,0},\ldots,t_{z,k-1}\in D(\frakP)\cap D(\frakG_1)\cap\ldots\cap D(\frakG_n)\cap T_z
\end{equation}
such that $t_{z,0} <\cdots < t_{z,k-1}$ there are $r_{z,0},\ldots,r_{z,k-1}\in \eq\setminus\{0\}$, such that  
\begin{equation}\label{eqfactdiff42real}
\frakP(r_{z,0}\La_{t_{z,0}},\ldots,r_{z,k-1}\La_{t_{z,k-1}},f_{z})=0
\end{equation}
and
\begin{equation}\label{eqfactdiff41real}
\frakG_s(r_{z,0}\La_{t_{z,0}},\ldots,r_{z,k-1}\La_{t_{z,k-1}},f_z)\succ 0,\quad s=1,\ldots,n,
\end{equation}
for some $f_z\in \frakN$. 
Moreover, points in \eqref{pointalwaysexist} 
 such that $t_{z,0} <\cdots < t_{z,k-1}$ always exist.
\end{prop}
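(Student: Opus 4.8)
The plan is to mimic the proof strategy of Proposition~\ref{Hoeveninvers} but now working with Nash functions on the $c$-filter $\varOmega^\er_{\bf x_0}$ centered at ${\bf x_0}$, whose coordinates are algebraically independent over $\eq$. First I would observe that since the defining data $z=(\frakP,\frakG_1,\ldots,\frakG_n,f_0,\ldots,f_k)$ lies in $\mathcal{Z}$, the coefficients of $\frakP,\frakG_1,\ldots,\frakG_n$ depend only on finitely many variables $\La_t$; removing those finitely many indices from $\Deltaa$ and intersecting with the infinite set $T_z=\Deltaa_{z}$ from Fact~\ref{factdiff3arch} still leaves an infinite set, so points $t_{z,0}\prec\cdots\prec t_{z,k}$ as in \eqref{pointalwaysexist} exist; this settles the ``moreover'' clause.

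Next, fix such points and pass to the point ${\bf x_0}$: by Fact~\ref{isomorphismKN} the evaluation map $f\mapsto f({\bf x_0})$ is an order isomorphism of $\frakN$ onto the real closed Archimedean field $\frakN_{\bf x_0}\subset\er$. Under this map $\frakP$ goes to $\frakP_{\bf x_0}\in\frakN_{\bf x_0}[Y]$ and $(f_0,\ldots,f_k)$ to a genuine zero $(f_0({\bf x_0}),\ldots,f_k({\bf x_0}))$ of $\frakP_{\bf x_0}$ at which $\partial\frakP_{\bf x_0}/\partial x_k\ne0$ and $\frakG_{s,{\bf x_0}}>0$. Here I would use the Archimedean property together with the fact that $\overline{\eq}^{\textup{real}}$ is dense in $\frakN_{\bf x_0}$ (it is dense in $\er$, hence in the Archimedean subfield): choose rationals or real-algebraic numbers $r_{z,0},\ldots,r_{z,k-1}$ so that the point $(r_{z,0}t_{z,0},\ldots,r_{z,k-1}t_{z,k-1})\in\er^{k}$ — using that ${\bf x_0}(t_{z,i})=t_{z,i}$ — is close enough to $(f_0({\bf x_0}),\ldots,f_{k-1}({\bf x_0}))$ that, by the implicit function theorem applied to $\frakP_{\bf x_0}$ in the variable $x_k$ near the simple zero, there is a value $\xi\in\frakN_{\bf x_0}$ with $\frakP_{\bf x_0}(r_{z,0}t_{z,0},\ldots,r_{z,k-1}t_{z,k-1},\xi)=0$ and, by continuity of the strictly positive $\frakG_{s,{\bf x_0}}$, also $\frakG_{s,{\bf x_0}}(r_{z,0}t_{z,0},\ldots,r_{z,k-1}t_{z,k-1},\xi)>0$ for all $s$.

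Finally I would lift this back to $\frakN$. Since $\xi\in\frakN_{\bf x_0}$ there is $f_z\in\frakN$ with $f_z({\bf x_0})=\xi$; because the $t_{z,i}$ were chosen in $D(\frakP)\cap D(\frakG_1)\cap\cdots\cap D(\frakG_n)$, the compositions $\frakP(r_{z,0}\La_{t_{z,0}},\ldots,r_{z,k-1}\La_{t_{z,k-1}},f_z)$ and $\frakG_s(r_{z,0}\La_{t_{z,0}},\ldots,r_{z,k-1}\La_{t_{z,k-1}},f_z)$ are elements of $\frakN$ (Nash functions), and evaluating at ${\bf x_0}$ gives $0$ and a strictly positive value respectively; since $f\mapsto f({\bf x_0})$ is injective on $\frakN$, the first is identically $0$, proving \eqref{eqfactdiff42real}, and since $f\succ0$ in $\frakN$ is equivalent to $f({\bf x_0})>0$, the second gives \eqref{eqfactdiff41real}. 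The main obstacle I anticipate is making the perturbation argument fully rigorous \emph{inside} the (possibly proper) real closed subfield $\frakN_{\bf x_0}$ of $\er$ rather than in $\er$ itself: one must ensure the implicit-function-theorem solution $\xi$ actually lands in $\frakN_{\bf x_0}$ (which it does, as a root of a polynomial over $\frakN_{\bf x_0}$ in a real closed field, selected by the sign/proximity conditions) and that ``closeness'' is meant in the order topology of the Archimedean field; the algebraic independence of the coordinates of ${\bf x_0}$ guarantees the relevant partial derivative stays nonzero after the substitution, so no degeneracy is introduced.
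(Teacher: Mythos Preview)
Your proposal is correct and follows essentially the same route as the paper: evaluate at ${\bf x_0}$ via the order isomorphism of Fact~\ref{isomorphismKN}, use density of $\overline{\eq}^{\textup{real}}$ in the Archimedean field $\frakN_{\bf x_0}$ to perturb the first $k$ coordinates to $r_{z,i}\,{\bf x_0}(t_{z,i})$, pick up a nearby root $\xi^*\in\frakN_{\bf x_0}$ in the last variable, and lift back to $\frakN$. The paper phrases the root-finding step as a sign-change/intermediate-value argument (valid in any real closed field) rather than invoking the implicit function theorem, which is exactly the clarification you already supply in your final paragraph; the only detail you leave implicit is that ${\bf x_0}(t_{z,i})=t_{z,i}\ne 0$ by algebraic independence, needed so that the scaling $r_{z,i}\mapsto r_{z,i}t_{z,i}$ is surjective.
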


\begin{proof} By definition of   $\mathcal{Z}$, the polynomial $\frakG_s$ depends on {\color{black}at most the first $k+1$ variables} $Y_j$. Since $T_{z}$ is infinite, there exist $t_{z,0},\ldots,t_{z,k}\in D(\frakP)\cap D(\frakG_1)\cap\cdots\cap D(\frakG_n)\cap T_z$ such that  $t_{z,0} < \cdots < t_{z,k}$. The set of coordinates of ${\bf x}_0$ is algebraically independent over $\eq$, so ${\bf x}_{0}(t_{z,j})\ne 0$ for $j=0,\ldots,k$. 

Let $\xi_j=f_j({\bf x}_0)\in\frakN_{{\bf x}_0}$, $j=0,\ldots, k$. In view of the choice of the point $z$, 
\begin{equation*}
\begin{split}
&\frakP_{{\bf x}_0}(\xi_0,\ldots,\xi_k)=0, \quad\frac{\partial \frakP_{{\bf x}_0}}{\partial x_k}(\xi_0,\ldots,\xi_k)\ne 0,\\ 
&(\frakG_s)_{{\bf x}_0}(\xi_0,\ldots,\xi_k) > 0,\quad s=1,\ldots,n.
\end{split}
\end{equation*}
For any $r_j\in\frakN_{{\bf x}_0}\setminus \{0\}$ sufficiently close to 
$\frac{\xi_j}{{\bf x_{0}}(t_{z,j})}$ for $j=0,\ldots,k$ 
we have 
\begin{equation}\label{eqinequalityholds}
(\frakG_s)_{{\bf x}_0}(r_0 {\bf x}_{0}(t_{z,0}),\ldots,r_k {\bf x}_{0}(t_{z,k})) > 0,\quad s=1,\ldots,n,
\end{equation}
and moreover $r_j {\bf x}_{0}(t_{z,j})=r_jF_j({\bf x}_0) \in\frakN_{{\bf x}_0}$, where $F_j(\La_T)=\La_{t_{z,j}}$ for $j=0,\ldots,k$. Then there exists $\varepsilon>0$ such that any point  of the  set
{\color{black}$$
\mathcal{U}_\varepsilon=\bigg\{r=(r_0,\ldots,r_k)\in \frakN_{{\bf x}_0}^{k+1} :\left|r_j- \frac{\xi_j}{{\bf x}_{0}(t_{z,j})}\right|<\varepsilon,\;j=0,\ldots,k\bigg\}
$$ 
satisfies \eqref{eqinequalityholds}.  Since $\frac{\partial \frakP_{{\bf x}_0}}{\partial x_k}(\xi_0,\ldots,\xi_k)\ne 0$ and $\frakN_{{\bf x}_0}$ is real closed,  the function
$$
\frakN_{{\bf x}_0}\ni \zeta \mapsto \frakP_{{\bf x}_0}(\xi_0,\ldots,\xi_{k-1},\zeta)\in\frakN_{{\bf x}_0}
$$
changes  sign at $\xi_k$. Thus there are $a,b\in \frakN_{{\bf x}_0}$ such that $a<b$ and
\begin{equation}\label{eqineqab}
\left|a-\xi_k\right|<\varepsilon|{\bf x}_0(t_{z,k}) |,\quad \left|b-\xi_k\right|<\varepsilon|{\bf x}_0(t_{z,k}) |
\end{equation}
and
$$
\frakP_{{\bf x}_0}(\xi_0,\ldots,\xi_{k-1},a)\frakP_{{\bf x}_0}(\xi_0,\ldots,\xi_{k-1},b)< 0.
$$
Since $\eq$ is a dense subset of $\frakN_{{\bf x}_0}$,  there exists $r=(r_0,\ldots,r_k)\in \mathcal{U}_\varepsilon\cap \eq^{k+1}$ such that $r_j\ne 0$ for $j=0,\ldots,k$, and
$$
\frakP_{{\bf x}_0}(r_0 {\bf x}_{0}(t_{z,0}),\ldots,r_{k-1} {\bf x}_{0}(t_{z,k-1}),a)\frakP_{{\bf x}_0}(r_0 {\bf x}_{0}(t_{z,0}),\ldots,r_{k-1} {\bf x}_{0}(t_{z,k-1}),b) < 0.
$$
As $\frakN_{{\bf x}_0}$ is real closed, this implies that there exists $\xi^*\in \frakN_{{\bf x}_0}$ such that $ a<\xi^*<b$ and
\begin{equation}\label{eqPboldx}
\frakP_{{\bf x}_0}(r_0 {\bf x_{0}}(t_{z,0}),\ldots,r_{k-1} {\bf x_{0}}(t_{z,k-1}),\xi^*)=0,
\end{equation}
and by \eqref{eqineqab},
$$
\left|\frac{\xi^*}{{\bf x}_0(t_{z,k})}-\frac{\xi_k}{{\bf x}_0(t_{z,k})}\right|<\varepsilon.
$$
Hence, $\left(r_0,\ldots,r_{k-1},\frac{\xi^*}{{\bf x}_0(t_{z,k})}\right)\in \mathcal{U}_\varepsilon$, and consequently  
\begin{equation}\label{eqequqlityandineq}
(\frakG_s)_{{\bf x}_0}(r_0 {\bf x}_{0}(t_{z,0}),\ldots,r_{k-1} {\bf x}_{0}(t_{z,k-1}),\xi^*) > 0,\quad s=1,\ldots,n.
\end{equation}

By definition of $\frakN_{{\bf x}_0}$ there exists $f_z\in \frakN$ such that $f_z({\bf x}_0)=\xi^*$. Moreover, $r_{z,j}:=r_j\in\eq$ and so $r_{z,j}\La_{t_{z,j}}\in \frakN$ for $j=0,\ldots,k-1$. Now, {\color{black}\eqref{eqPboldx}, \eqref{eqequqlityandineq} and Fact \ref{isomorphismKN} immediately give} the assertion.}
\end{proof}

{\color{black}Assume that for any $z=(\frakP,\frakG_1,\ldots,\frakG_n,f_0,\ldots, f_k)\in\mathcal{Z}$,  we have chosen  points
$$
t_{z,0},\ldots,t_{z,k-1}\in D(\frakP)\cap D(\frakG_1)\cap\cdots\cap D(\frakG_n)\cap {T}_z,
$$ 
where $k=\alpha(\frakP)$, such that $t_{z,0} < \cdots < t_{z,k-1}$ and $r_{z,0},\ldots,r_{z,k-1}\in \eq \setminus\{0\}$, and $f_z\in \frakN$ as in Proposition \ref{factdiff4real}, i.e.,  \eqref{eqfactdiff42real} and \eqref{eqfactdiff41real} hold.} 

Define a family $g$ of points $g_t\in\frakN$, $t\in T$, by
\begin{equation}\label{selectionreal}
g_t=\begin{cases}\frac{r_{z,i+1}}{r_{z,i}}\La_{t_{z,i+1}}&\hbox{for }t=t_{z,i},\;i=0,\ldots,\alpha(\frakP)-2,\\
\frac{1}{r_{z,\alpha(\frakP)-1}}f_z&\hbox{for } t=t_{z,\alpha(\frakP)-1},\\
h_t &\hbox{for }t\in T_{z}\setminus \{ t_{z,0},\ldots,t_{z,\alpha(\frakP)-1}\},
\end{cases}
\end{equation}
where $h_t\in \frakN$ are arbitrary for $t\in T_{z}\setminus \{ t_{z,0},\ldots,t_{z,\alpha(\frakP)-1}\}$,  
for each $z=(\frakP,\frakG_1,\ldots,$ $\frakG_n,f_0,\ldots, f_{\alpha(\frakP)})\in\mathcal{Z}$.

Consider the following derivation on $\frakN$:
\begin{equation}\label{defivationderivativereal}
\delta_g(f) =\sum_{t\in T}g_t\frac{\partial f}{\partial \La_t}\quad\hbox{for }f\in \frakN.
\end{equation}

\begin{twr}\label{maindifferclosed1real}
$(\frakN,\delta_g)$ is an ordered differentially closed field.
\end{twr}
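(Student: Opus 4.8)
The plan is to verify the defining conditions of an ordered differentially closed field (Singer's definition, stated in Section~1.4) for the pair $(\frakN,\delta_g)$ with $\frakN=\mathcal N^\er_{\bf x_0}$ and $\delta_g$ given by \eqref{defivationderivativereal}. First one observes that $\frakN$ is a real closed field by Corollary~\ref{specialordering}, so the only thing requiring work is the differential solvability condition: given $p,q_1,\ldots,q_n\in\frakN\{y\}$ with $n'=\ord p\ge\ord q_s$ for all $s$, and given $a_0,\ldots,a_{n'}\in\frakN$ with $p^*(a_0,\ldots,a_{n'})=0$, $\frac{\partial p^*}{\partial x_{n'}}(a_0,\ldots,a_{n'})\ne0$ and $q_s^*(a_0,\ldots,a_{n'})\succ0$, we must produce $a\in\frakN$ with $p(a)=0$ and $q_s(a)\succ0$. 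The case $\ord p=0$ is handled directly: then $p$ is an ordinary polynomial over the real closed field $\frakN$ with a simple real root and the $q_s$ are positive at that root, so real-closedness of $\frakN$ (or rather of $\frakN_{\bf x_0}$ via Fact~\ref{isomorphismKN}) gives the conclusion. So assume $k:=\ord p=\alpha(\frakP)>0$.

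\textbf{Key steps.} Using the correspondence $\frakN\{y\}=\frakN[Y_j:j\in\en]$ via $Y_j\mapsto y_{\delta_g^j}$, write $p=\frakP(y_0,y_{\delta_g},\ldots)$ and $q_s=\frakG_s(y_0,y_{\delta_g},\ldots)$, so that $(\frakP,\frakG_1,\ldots,\frakG_n)\in\mathcal B_{k,n}$ with $k=\alpha(\frakP)$. The data $a_0,\ldots,a_k\in\frakN$ with the equality/inequality conditions show that $z:=(\frakP,\frakG_1,\ldots,\frakG_n,a_0,\ldots,a_k)\in\mathcal Z$. Now invoke the construction: for this particular $z$ we have selected points $t_{z,0}\prec\cdots\prec t_{z,k}$ in $D(\frakP)\cap D(\frakG_1)\cap\cdots\cap D(\frakG_n)\cap T_z$, scalars $r_{z,0},\ldots,r_{z,k-1}\in\overline\eq^{\textrm{real}}\setminus\{0\}$ and an element $f_z\in\frakN$ with, by Proposition~\ref{factdiff4real},
\begin{equation*}
\frakP(r_{z,0}\La_{t_{z,0}},\ldots,r_{z,k-1}\La_{t_{z,k-1}},f_z)=0,\qquad \frakG_s(r_{z,0}\La_{t_{z,0}},\ldots,r_{z,k-1}\La_{t_{z,k-1}},f_z)\succ0,\ s=1,\ldots,n.
\end{equation*}
The crux is then the claim that $a:=r_{z,0}\La_{t_{z,0}}\in\frakN$ is the desired solution. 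To see this one computes $\delta_g^j(a)$ from the definition \eqref{selectionreal} of $g$: since $g_{t_{z,i}}=\frac{r_{z,i+1}}{r_{z,i}}\La_{t_{z,i+1}}$ for $i=0,\ldots,k-2$ and $g_{t_{z,k-1}}=\frac1{r_{z,k-1}}f_z$, an easy induction (using that $\partial/\partial\La_{t_{z,i}}$ of $r_{z,i}\La_{t_{z,i}}$ is $r_{z,i}$, and that $g$ was arranged to be ``arbitrary'' off these coordinates but those variables do not occur in $\frakP$ or the $\frakG_s$ thanks to the $D(\cdot)$ conditions) yields $\delta_g^j(a)=r_{z,j}\La_{t_{z,j}}$ for $j=0,\ldots,k-1$ and $\delta_g^k(a)=f_z$. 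Hence $p(a)=p^*(a,\delta_g(a),\ldots,\delta_g^k(a))=\frakP(r_{z,0}\La_{t_{z,0}},\ldots,r_{z,k-1}\La_{t_{z,k-1}},f_z)=0$ and likewise $q_s(a)=\frakG_s(\ldots)\succ0$, completing the verification.

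\textbf{Main obstacle.} The routine part is the inductive computation of the iterated derivatives; the genuinely delicate point, already isolated in Proposition~\ref{factdiff4real}, is that one must simultaneously (i) realize the prescribed jet $(a_0,\ldots,a_k)$ as an honest jet $(\delta_g^0(a),\ldots,\delta_g^k(a))$ of a single coordinate function, and (ii) keep all the sign conditions $\frakG_s\succ0$, which in $\frakN$ means positivity at ${\bf x_0}$. This is why the construction passes to $\frakN_{\bf x_0}\subset\er$ (Fact~\ref{isomorphismKN}), perturbs the $r_j$ within the real closure $\overline\eq^{\textrm{real}}$, which is dense in $\er$, and uses that ${\bf x_0}$ has algebraically independent coordinates so ${\bf x_0}(t_{z,j})\ne0$; the openness of the sign conditions then gives room to choose the $r_j$ rational-algebraic while preserving positivity, and real-closedness of $\frakN_{\bf x_0}$ supplies $\xi^*=f_z({\bf x_0})$ by the intermediate value property. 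Granting Proposition~\ref{factdiff4real} (which is where the real work has been placed), the proof of Theorem~\ref{maindifferclosed1real} is just the bookkeeping above together with the observation that $\delta_g$ is a well-defined derivation by Proposition~\ref{derywacjenaNash1}. One should also note, for completeness, that the ordering $\succ$ on $\frakN$ is exactly the one for which $f\succ0\iff f({\bf x_0})>0$, so that the translation between ``$\succ0$ in $\frakN$'' and ``$>0$ in $\frakN_{\bf x_0}$'' used throughout is legitimate.
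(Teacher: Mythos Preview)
Your proof is correct and follows essentially the same route as the paper: identify the tuple $z=(\frakP,\frakG_1,\ldots,\frakG_n,f_0,\ldots,f_k)\in\mathcal Z$, invoke Proposition~\ref{factdiff4real} and the definition \eqref{selectionreal} of $g$, and take $a=r_{z,0}\La_{t_{z,0}}$; the paper abbreviates the computation of $\delta_g^j(a)$ by referring back to the proof of Theorem~\ref{maindifferclosed1}, whereas you spell it out. Your explicit treatment of the case $\ord p=0$ (where one may simply take $a=a_0$) is a small improvement, since $\mathcal Z$ is defined only for $k\ge 1$.
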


\begin{proof}
Obviously $(\frakN,\delta_g)$ is an ordered differential field and by Corollary \ref{specialordering}, $\frakN$ is real closed. Take any $p,q_1,\ldots,q_n\in\frakN\{y\}$ such that $k=\ord p\ge \ord q_j$, $1\le j\le n$, and any $f_0,\ldots,f_k\in\frakN$ such that $p^*(f_0,\ldots,f_k)=0$, $\frac{\partial p^*}{\partial x_k}(f_0,\ldots,f_k)\ne 0$ and $q_j^*(f_0,\ldots,f_k)\succ 0$, $1\le j\le n$. Then  $z=(p^*,q_1^*,\ldots,q_n^*,f_0,\ldots, f_k)\in\mathcal{Z}$ and $k=\alpha(p^*)$. Since $r_{z,j}\in\eq$, by \eqref{selectionreal} for $f=r_{z,0}\La_{t_{z,0}}$ we have 
$$
{\color{black}\delta(f)=r_{z,0}\delta(\Lambda_{t_{z,0}})=r_{z,1}\Lambda_{z,1},\ \ldots,\  \delta^{k-1}(f)=r_{z,k-1}\Lambda_{z,k-1},\;\;\delta^k(f)=f_z.}
$$
So, Proposition \ref{factdiff4real}, similarly to the proof of Theorem \ref{maindifferclosed1}, shows that $p(f)=0$ and $q_j(f)\succ 0$, $1\le j\le n$, which gives the assertion. 
\end{proof}

From the choice of $g$ in \eqref{selectionreal} and Theorem \ref{maindifferclosed1real} we have

\begin{cor}\label{corcardialitydifferrentailclos}
The set of all derivations $\delta$ on  $\mathcal{N}^\er_{{\bf x}_0}$  such that  $(\mathcal{N}^\er_{{\bf x}_0},\delta)$ is an ordered  differentially closed field has cardinality $2^{\operatorname{card}(\mathcal{N}^\er_{{\bf x}_0})}$.
\end{cor}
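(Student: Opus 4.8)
The plan is to lower-bound the number of admissible derivations by exhibiting a set of $2^{\operatorname{card}(\mathcal{N}^\er_{\bf x_0})}$ pairwise distinct families $g$, each of which yields an ordered differentially closed field via Theorem \ref{maindifferclosed1real}. First I would invoke Fact \ref{factdiff3arch}: the index set $T$ decomposes into pairwise disjoint infinite countable blocks $T_z$, one for each $z\in\mathcal Z$, and for each $z$ we fixed (in the construction preceding \eqref{selectionreal}) an initial segment of points $t_{z,0}\prec\cdots\prec t_{z,\alpha(\frakP)-1}$ in $T_z$ on which the values $g_t$ are prescribed rigidly, while on the cofinite remainder $T_z\setminus\{t_{z,0},\ldots,t_{z,\alpha(\frakP)-1}\}$ the values $h_t\in\frakN$ are completely arbitrary. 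The point is that this last freedom alone already produces an enormous family of valid derivations: any choice of the $h_t$'s on these ``free'' indices gives, by Theorem \ref{maindifferclosed1real}, an ordered differentially closed field, because the proof of that theorem only uses the prescribed values $g_{t_{z,i}}$ and the identity $f=r_{z,0}\La_{t_{z,0}}$, never the free values.

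Next I would count. Fix one $z_0\in\mathcal Z$ and let $S=T_{z_0}\setminus\{t_{z_0,0},\ldots,t_{z_0,\alpha(\frakP)-1}\}$; this is an infinite (hence at least countable) set of indices on which $g$ may be assigned arbitrarily in $\frakN$. Thus the number of admissible $g$ is at least $\operatorname{card}(\frakN)^{\operatorname{card}(S)}\ge\operatorname{card}(\frakN)^{\aleph_0}$. Since $\frakN=\mathcal N^\er_{\bf x_0}$ is infinite, $\operatorname{card}(\frakN)^{\aleph_0}\ge 2^{\aleph_0}$; but we want the sharper bound $2^{\operatorname{card}(\frakN)}$. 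To get it, note that $S$ can in fact be taken of cardinality $\operatorname{card}(T)=\operatorname{card}(\frakN)$: by Fact \ref{factdiff3arch} one may redistribute the blocks so that a single block (or a union of blocks over a cofinal subfamily of $\mathcal Z$) has cardinality $\operatorname{card}(T)$, or more simply, one observes directly from Fact \ref{factdiff2arch} and the bijection $\tau:\en\times T\to T$ that $T$ itself, minus the countably-many-per-$z$ prescribed points, still has cardinality $\operatorname{card}(T)=\operatorname{card}(\frakN)$. Hence the free indices number $\operatorname{card}(\frakN)$, and we may assign to each of them either $0$ or $1$ independently, giving $2^{\operatorname{card}(\frakN)}$ distinct families $g$, each producing an ordered differentially closed field $(\frakN,\delta_g)$.

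Finally I would check that distinct admissible families $g$ give distinct derivations $\delta_g$. This is immediate from formula \eqref{defivationderivativereal}: if $g_t\ne g_t'$ for some $t\in T$, then applying $\delta_g$ and $\delta_{g'}$ to the coordinate function $\La_t\in\frakN$ gives $\delta_g(\La_t)=g_t\ne g_t'=\delta_{g'}(\La_t)$, so $\delta_g\ne\delta_{g'}$. Since the $2^{\operatorname{card}(\frakN)}$ choices described above differ on at least one free index, they yield $2^{\operatorname{card}(\frakN)}$ pairwise distinct derivations, each making $\frakN$ an ordered differentially closed field by Theorem \ref{maindifferclosed1real}. As $\operatorname{card}(\mathcal N^\er_{\bf x_0})=\operatorname{card}(\frakN)$, this is exactly the claimed bound.

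The main obstacle is purely bookkeeping: one must verify that the set of ``free'' indices can genuinely be arranged to have full cardinality $\operatorname{card}(\frakN)$ rather than merely $\aleph_0$, i.e.\ that the rigid prescriptions imposed by the construction \eqref{selectionreal} across all $z\in\mathcal Z$ do not exhaust $T$. Since for each $z$ only $\alpha(\frakP)$ indices are prescribed (a finite number), while $T_z$ is infinite, the union of free indices over all $z$ has cardinality $\operatorname{card}(\mathcal Z)\cdot\aleph_0=\operatorname{card}(T)$ by Fact \ref{factdiff2arch}; once this is noted, the counting is routine, and no genuine difficulty remains.
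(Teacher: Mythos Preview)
Your proposal is correct and follows exactly the approach the paper intends: the paper itself records the corollary as an immediate consequence of the freedom in the choice of $g$ in \eqref{selectionreal} together with Theorem~\ref{maindifferclosed1real}, and you have simply spelled out the details (the cardinality of the set of free indices via Fact~\ref{factdiff2arch}, and the injectivity $g\mapsto\delta_g$ via $\delta_g(\La_t)=g_t$). The slight detour through a single block $T_{z_0}$ before passing to the full set of free indices is unnecessary but harmless.
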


{\color{black}\begin{remark}
By Corollary \ref{corequivrealdiffclosed}, to construct a derivation $\delta$ on $\frakN$ such that $(\frakN,\delta)$ becomes ordered differentially closed, it suffices to consider the  set
\begin{multline*}
\mathcal{Z}=\bigcup_{k,n=1}^\infty \bigg\{(\frakP,\ve,f_0,\ldots,f_k)\in \frakN[Y]\times \eq_+\times \frakN^{k+1}:k=\alpha(\frakP)\ge 0,\\[-4pt]
\frakP(f_0,\ldots,f_k)=0,\;
\frac{\partial \frakP}{\partial x_k}(f_0,\ldots,f_k)\ne 0\bigg\}
\end{multline*}
instead of the one defined in \eqref{eqq1qn}, and repeat the construction in Proposition \ref{factdiff4real} without taking into consideration the polynomials $\frakG_1, \ldots, \frakG_n$.
\end{remark}}


\begin{remark}\label{proprealdiffi}
Let  $(\mathcal{N}^\er_{{\bf x}_0},\delta_g)$ be the ordered differentially closed field with derivation $\delta_g$ defined by \eqref{selectionreal} and \eqref{defivationderivativereal}. By Proposition \ref{closedordereddifferential} (see also  \cite{Singer2}), the field $\mathcal{N}^\er_{{\bf x}_0}(i)$ with the  derivation 
$$
\delta(f_1+if_2)=\delta_g(f_1)+i\delta_g(f_2),
$$
extending  $\delta_g$, is a differentially closed field.

Indeed, since $i$ is   algebraic over $\mathcal{N}^\er_{{\bf x}_0}$, it follows that $\delta$ is the unique derivation in $\mathcal{N}^\er_{{\bf x}_0}(i)$ 
extending $\delta_g$. Thus Proposition \ref{closedordereddifferential} gives the assertion.
\end{remark}

\begin{remark}
By Remarks \ref{remarkseries}, \ref{proprealdiffi} and Corollary \ref{specialordering2} we see that any function $f\in \mathcal{N}^\cc_{{\bf x}_0}=\mathcal{N}^\er_{{\bf x}_0}(i)$ is  holomorphic in a neighborhood of ${\bf x}_0$ in $\cc^T$. Consequently, one can consider the elements $f$ as sums of power series centered at ${\bf x}_0$ in a finite number of complex variables (or as germs of holomorphic functions).
\end{remark}

\vskip.5cm
 \small
\noindent{\bf Acknowledgements.}  I would like to cordially
thank  Piotr J\k{e}drzejewicz,  Krzysztof Kurdyka and Andrzej Nowicki for their valuable remarks and advice during the preparation of this paper. I  also  thank the referee for many valuable comments that helped to avoid errors and many shortcomings in the paper.

 \end{document}